\numberwithin{equation}{section}
\numberwithin{figure}{section}
\theoremstyle{definition}
	\newtheorem{definition}{Definition}[section]
	\numberwithin{definition}{section}
\theoremstyle{plain}
	\newtheorem{lemma}[definition]{Lemma}
	\newtheorem{proposition}[definition]{Proposition}
	\newtheorem{theorem}{Theorem}
	\newtheorem*{conjecture*}{Conjecture}
	\newtheorem{corollary}[definition]{Corollary}
	\newtheorem*{corollary*}{Corollary}
\let \L \relax
\let \d \relax
\let \Re \relax
\let \Im \relax
\let \d \relax
\let \i \relax
\let \P \relax
\DeclareMathOperator{\I}{I}
\DeclareMathOperator{\E}{E}
\DeclareMathOperator{\Q}{Q}
\DeclareMathOperator{\de}{\delta}
\DeclareMathOperator{\Id}{Id}
\DeclareMathOperator{\L}{L}
\DeclareMathOperator{\Lbar}{\overline{L}}
\DeclareMathOperator{\Re}{Re}
\DeclareMathOperator{\Im}{Im}
\DeclareMathOperator{\P}{P}
\DeclareMathOperator{\Rop}{R}
\DeclareMathOperator{\Sop}{S}
\DeclareMathOperator{\piop}{\pi}
\DeclareMathOperator{\sigmaop}{\sigma}
\DeclareMathOperator{\im}{im}
\renewcommand{\th}{\theta}
\newcommand{\As}{\mathscr{A}}
\newcommand{\Ls}{\mathscr{L}}
\newcommand{\Ss}{\mathscr{S}}
\newcommand{\Ac}{\mathcal{A}}
\newcommand{\Bc}{\mathcal{B}}
\newcommand{\Ec}{\mathcal{E}}
\newcommand{\C}{\mathbb{C}}
\newcommand{\Z}{\mathbb{Z}}
\newcommand{\R}{\mathbb{R}}
\newcommand{\T}{\mathbb{T}}
\newcommand{\To}{\mathring{\mathbb{T}}}
\newcommand{\K}{\C}
\newcommand{\Dbb}{\mathbb{D}}
\newcommand{\map}[3]{#1\colon#2\rightarrow#3}
\newcommand{\deq}{\mathrel{\mathop:}=}
\newcommand{\dd}[2]{\frac{\mathrm{d} #1}{\mathrm{d} #2}}
\newcommand{\pd}[2]{\frac{\partial #1}{\partial #2}}
\newcommand{\vsharp}{v^{\sharp}}
\newcommand{\vflat}{v^{\flat}}
\newcommand{\usharp}{u^{\sharp}}
\newcommand{\uflat}{u^{\flat}}
\newcommand{\SL}{\mathrm{SL}}
\newcommand{\PSL}{\mathrm{PSL}}
\newcommand{\PU}{\mathrm{PU}}
\newcommand{\cb}{\mathrm{cb}}
\renewcommand{\c}{\mathrm{c}}
\renewcommand{\b}{\mathrm{b}}
\renewcommand{\t}{\mathrm{\tau}}
\newcommand{\d}{\mathrm{d}}
\newcommand{\i}{i}
\newcommand{\ftilde}{\tilde{f}}
\newcommand{\z}{\mathbf{z}}
\newcommand{\thul}{\bm{\uptheta}}
\newcommand{\Or}{\mathrm{or}}
\newcommand{\m}{\mu}
\begin{document}

\title{Transgression in bounded cohomology and a conjecture of Monod}

\author[A. Ott]{Andreas Ott}
\address{Mathematisches Institut, Ruprecht-Karls-Universit\"at Heidelberg, Mathematikon, Im Neuenheimer Feld 205, 69120 Heidelberg, Germany}
\email{aott@mathi.uni-heidelberg.de}

\begin{abstract}
	We develop an algebro-analytic framework for the systematic study of the continuous bounded cohomology of Lie groups in large degree. As an application, we examine the continuous bounded cohomology of $\PSL(2,\R)$ with trivial real coefficients in all degrees greater than two. We prove a vanishing result for strongly reducible classes, thus providing further evidence for a conjecture of Monod. On the cochain level, our method yields explicit formulas for cohomological primitives of arbitrary bounded cocycles.
\end{abstract}

\maketitle

\tableofcontents

\section{Introduction and statement of the results}

Bounded cohomology of discrete groups was introduced into geometry by Gromov \cite{Gromov/Volume-and-bounded-cohomology}. The theory was subsequently extended to locally compact second countable groups by Burger and Monod \cite{Burger/Bounded-cohomology-of-lattices-in-higher-rank-Lie-groups,Burger/Continuous-bounded-cohomology-and-applications-to-rigidity-theory,Monod/Continuous-bounded-cohomology-of-locally-compact-groups}, who coined the term \emph{continuous bounded cohomology}. Bounded cohomology has by now proved itself an indispensable tool in geometry, topology and group theory, see for example the references surveyed in \cite{Hartnick/Bounded-cohomology-via-partial-differential-equations-I}. Nevertheless, the structure of the bounded cohomology ring of a given group is in general not very well understood. Existing results are chiefly concerned with bounded cohomology in low degrees, most notably bounded cohomology in degree $2$, which is intimately linked with quasi-morphisms (e.g.~\cite{Brooks/Some-remarks-on-bounded-cohomology,Grigorchuk/Some-results-on-bounded-cohomology,Epstein/The-second-bounded-cohomology-of-word-hyperbolic-groups,Bestvina/Bounded-cohomology-of-subgroups-of-mapping-class-groups,Burger/Bounded-cohomology-of-lattices-in-higher-rank-Lie-groups,Burger/Continuous-bounded-cohomology-and-applications-to-rigidity-theory,BestvinaBrombergFujiwara/Bounded-cohomology-with-coefficients-in-uniformly-convex-Banach-spaces,HullOsin/Induced-quasicocycles-on-groups-with-hyperbolically-embedded-subgroups,AntolinMjSistoTaylor/Intersection-properties-of-stable-subgroups-and-bounded-cohomology,HartnickSisto/Bounded-cohomology-and-virtually-free-hyperbolically-embedded-subgroups}), and bounded cohomology in degree $3$, which has close ties with the geometry of $3$-manifolds (e.g.~\cite{Brooks/Some-remarks-on-bounded-cohomology,Soma/Bounded-cohomology-and-topologically-tame-Kleinian-groups,Soma/Bounded-cohomology-of-closed-surfaces,Soma/The-zero-norm-subspace-of-bounded-cohomology,Farre/Bounded-cohomology-of-finitely-generated-Kleinian-groups,Farre/Relations-in-bounded-cohomology,Burger/On-and-around-the-bounded-cohomology-of-SL2,Pieters/Continuous-cohomology-of-the-isometry-group-of-hyperbolic-space-realizable-on-the-boundary,FranceschiniFrigerioPozzettiSisto/The-zero-norm-subspace-of-bounded-cohomology-of-acylindrically-hyperbolic-groups}). Bounded cohomology in higher degrees, on the contrary, is still largely unexplored. There is a number of known bounded cohomology classes in higher degree, often emerging from explicit geometric constructions (e.g.~ \cite{Dupont/Bounds-for-characteristic-numbers-of-flat-bundles,Thurston/Three-dimensional-geometry-and-topology.-Vol.-1,Gromov/Volume-and-bounded-cohomology,Goncharov/Geometry-of-configurations-polylogarithms-and-motivic-cohomology,Mineyev/Bounded-cohomology-characterizes-hyperbolic-groups,Bucher-Karlsson/Finiteness-properties-of-characteristic-classes-of-flat-bundles,Lafont/Simplicial-volume-of-closed-locally-symmetric-spaces-of-non-compact-type,Bucher-Karlsson/Simplicial-volume-of-locally-symmetric-spaces-covered-by-rm-SLsb-3Bbb-R/rm-SO3,Bucher/The-norm-of-the-Euler-class,Hartnick/Surjectivity-of-the-comparison-map-in-bounded-cohomology-for-Hermitian-Lie-groups,Hartnick/Bounded-cohomology-via-partial-differential-equations-I,BucherMonod/The-cup-product-of-Brooks-quasimorphisms,Heuer/Cup-Product-in-Bounded-Cohomology-of-the-Free-Group}). On the other hand, a classical result due to Johnson \cite{Johnson/Cohomology-in-Banach-algebras} asserts that the bounded cohomology of an amenable group vanishes in all positive degrees. Moreover, L\"oh \cite{Loh/A-note-on-bounded-cohomological-dimension-of-discrete-groups} recently found non-amenable groups whose bounded cohomology with trivial real coefficients vanishes in all positive degrees, and Bucher and Monod \cite{BucherMonod/The-bounded-cohomology-of-SL2-over-local-fields-and-S-integers} proved a similar statement for $\SL_{2}$ over non-Archimedian local fields. These latter results have in common that the bounded cohomological dimension of the respective group is zero. In fact, it is presently not known if there exists any group with non-zero finite bounded cohomological dimension. In a different direction, Monod \cite{Monod/On-the-bounded-cohomology-of-semi-simple-groups-S-arithmetic-groups-and-products,Monod/Vanishing-up-to-the-rank-in-bounded-cohomology} proved vanishing in degree below twice the rank for the bounded cohomology of non-amenable semisimple groups with non-trivial coefficients.  

\medskip

Our goal in this article is to initiate a systematic study of bounded cohomology in large degree. In view of the following conjecture of Monod
it is natural to focus attention, for the time being, on the continuous bounded cohomology of Lie groups with trivial real coefficients. The conjecture also suggests what the precise meaning of large degree should be in this case, as we will readily see.

\begin{conjecture*}[Monod \cite{Monod/An-invitation-to-bounded-cohomology}]
	Let G be a connected semisimple Lie group with finite center.	Then the natural comparison map $H^\bullet_{\cb}(G;\R) \to H^\bullet_{\c}(G;\R)$ from the continuous bounded cohomology to the continuous cohomology of $G$ is an isomorphism in all degrees.
\end{conjecture*}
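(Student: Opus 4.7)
The conjecture asserts that the comparison map $H^\bullet_{\cb}(G;\R) \to H^\bullet_{\c}(G;\R)$ is an isomorphism for every connected semisimple Lie group $G$ with finite center, and it is open in essentially every positive degree. The natural plan is to split the problem into manageable pieces: first, reduce to simple factors by a K\"unneth-type theorem for $H^\bullet_{\cb}$; then, within each simple factor, handle the low degrees $\leq \dim_{\R} G/K$ where $H^\bullet_{\c}$ is supported; and finally establish vanishing of $H^\bullet_{\cb}$ in the higher degrees. For the rank-one atom $G = \PSL(2,\R)$, van Est gives $H^\bullet_{\c}(G;\R) = \R$ in degrees $0$ and $2$ and zero otherwise. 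The degree-$2$ part of the conjecture is known via Gromov and Matsumoto--Morita: the Euler class lies in the image of the comparison map, which is then an isomorphism. Hence for $\PSL(2,\R)$ it remains to prove that $H^n_{\cb}(\PSL(2,\R);\R) = 0$ for every $n \geq 3$.

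For this vanishing I would work in the Poisson boundary resolution, which identifies $H^\bullet_{\cb}(\PSL(2,\R);\R)$ with the cohomology of the complex of $\PSL(2,\R)$-invariant, alternating, essentially bounded measurable cocycles on $(S^1)^{\bullet+1}$ (Burger--Monod, Burger--Iozzi). In this concrete model the conjecture becomes the assertion that every bounded invariant alternating $n$-cocycle on the boundary circle admits a bounded invariant alternating $(n-1)$-primitive whenever $n \geq 3$. My plan is to construct, at the level of cochains, an explicit transgression operator $K\colon L^\infty_{\mathrm{alt}}((S^1)^{n+1})^{G} \to L^\infty_{\mathrm{alt}}((S^1)^{n})^{G}$ built from integration against a geometric kernel on $S^1$, for instance one derived from the cross-ratio or from a boundary-valued barycentre. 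The target identity on cocycles is $\delta K + K\delta = \Id$, so that $K$ produces a cochain-level primitive directly from the cocycle itself.

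The genuine obstacle is boundedness. Since $H^n_{\c}(\PSL(2,\R);\R) = 0$ for $n \geq 3$, a primitive is automatic in the continuous complex; the real content is to choose it within $L^\infty$. A realistic intermediate target, which is what the present paper achieves, is to isolate a sub-class of cocycles --- the \emph{strongly reducible} ones --- whose symmetry properties allow the kernel integration to be carried out in closed form and the resulting cochain to be controlled via partial-differential-equation estimates on the boundary circle. The remaining step is a reduction argument showing that, up to a bounded coboundary, every invariant alternating bounded cocycle of degree $\geq 3$ is strongly reducible; this reduction, rather than the construction of $K$ itself, is where I expect the main difficulty to lie, since it requires analysing the decomposition of the space of invariant alternating bounded cocycles on $(S^1)^{n+1}$ under $\PSL(2,\R)$ into pieces amenable to transgression.
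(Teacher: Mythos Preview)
The statement you are addressing is an open conjecture; the paper does not prove it, and your proposal is not a proof either but a research outline. You acknowledge this yourself, so the honest assessment is simply that there is no proof here to compare.

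That said, two of your proposed reductions deserve scrutiny because they hide difficulties at least as hard as the conjecture. First, the reduction to simple factors via a ``K\"unneth-type theorem for $H^\bullet_{\cb}$'' is not available: no general K\"unneth formula for continuous bounded cohomology is known, and establishing one would itself be a major advance. Second, and more seriously, your final ``remaining step'' --- showing that every bounded invariant cocycle of degree $\geq 3$ on $(S^1)^{n+1}$ is, up to bounded coboundary, strongly reducible --- is not a reduction at all. Strongly reducible means the class factors through the cup product with the degree-$2$ generator; since $H^{n-2}_{\cb}$ is conjecturally zero for $n\geq 5$, a strongly reducible class is automatically trivial, so your ``reduction'' is logically equivalent to the vanishing you are trying to prove. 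The paper's contribution is precisely the forward direction (strongly reducible implies zero), and it explicitly leaves open whether the transgression map is surjective, which is the substantive content of your proposed reduction.

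Where your outline does align with the paper is in the choice of model (the boundary resolution on $(S^1)^{\bullet+1}$) and in the idea of building an explicit primitive via integration. The paper's operator $\P$ is indeed of this flavour, but it is not a universal cochain contraction $K$ with $\delta K + K\delta = \Id$ on the $G$-invariant $L^\infty$ complex; rather, it produces a measurable $G$-invariant primitive for every bounded cocycle, and the entire analytic difficulty is proving that this primitive is bounded --- which the paper can do only under the strong-reducibility hypothesis. Your instinct that ``boundedness is the genuine obstacle'' is correct, but you should not expect a single kernel formula to yield a contracting homotopy on the invariant $L^\infty$ complex; if it did, the conjecture for $\PSL(2,\R)$ would already be settled.
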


Surjectivity of the comparison map was already studied by Dupont \cite{Dupont/Bounds-for-characteristic-numbers-of-flat-bundles} and has since been established in many cases \cite{Dupont/Bounds-for-characteristic-numbers-of-flat-bundles,Thurston/Three-dimensional-geometry-and-topology.-Vol.-1,Gromov/Volume-and-bounded-cohomology,Goncharov/Geometry-of-configurations-polylogarithms-and-motivic-cohomology,Bucher-Karlsson/Finiteness-properties-of-characteristic-classes-of-flat-bundles,Lafont/Simplicial-volume-of-closed-locally-symmetric-spaces-of-non-compact-type,Bucher-Karlsson/Simplicial-volume-of-locally-symmetric-spaces-covered-by-rm-SLsb-3Bbb-R/rm-SO3,Hartnick/Surjectivity-of-the-comparison-map-in-bounded-cohomology-for-Hermitian-Lie-groups}, while still almost nothing is known about injectivity. In fact, injectivity of the comparison map has so far only been proved in degree $2$ by Burger and Monod \cite{Burger/Bounded-cohomology-of-lattices-in-higher-rank-Lie-groups}, in degree $3$ for certain groups of rank $1$ by Burger and Monod \cite{Burger/On-and-around-the-bounded-cohomology-of-SL2}, Bloch \cite{Bloch/Higher-regulators-algebraic-K-theory-and-zeta-functions-of-elliptic-curves} and Pieters \cite{Pieters/Continuous-cohomology-of-the-isometry-group-of-hyperbolic-space-realizable-on-the-boundary}, and in degree $4$ for $\SL_{2}(\R)$ by Hartnick and the author \cite{Hartnick/Bounded-cohomology-via-partial-differential-equations-I}. The conjecture predicts that the bounded cohomological dimension of a connected semisimple Lie group $G$ with finite center equals the dimension of the symmetric space associated to  $G$, and is hence positive and finite. In particular, we expect that $H^{n}_{\cb}(G;\R) = 0$ whenever the degree $n$ is sufficiently large in the sense that it exceeds the dimension of the symmetric space of $G$.

\medskip

The present article is devoted to the examination of this sort of conjectural vanishing of continuous bounded cohomology in large degree. We will always assume that $G$ is a connected real Lie group that is locally isomorphic to $\PSL_{2}(\R)$. Note that in this case, Monod's conjecture predicts that $H^{n}_{\cb}(G;\R) = 0$ for all $n>2$. Theorem~\ref{thm:VanishingForStronglyReducibleClasses} below shows that the conjecture holds for all classes in degree $n>2$ that are strongly reducible in the following sense. A bounded cohomology class $\alpha \in H^{n}_{\cb}(G;\R)$ is called \emph{strongly reducible} if it admits a product decomposition
\[
\alpha = \alpha^{\prime} \smallsmile \alpha^{\prime\prime}
\]
with factors $\alpha^{\prime} \in H^{2}_{\cb}(G;\R)$ and $\alpha^{\prime\prime} \in H^{n-2}_{\cb}(G;\R)$. Here we denote by $\smallsmile$ the natural cup product on the continuous bounded cohomology of $G$ (see Section \ref{subsec:ContinuousBoundedCohomology}). We are going to prove the following vanishing theorem for strongly reducible classes.

\begin{theorem} \label{thm:VanishingForStronglyReducibleClasses}
	Let $G$ be a connected real Lie group that is locally isomorphic to $\PSL_{2}(\R)$, and consider a class $\alpha \in H^{n}_{\cb}(G;\R)$ of degree $n>2$ in the continuous bounded cohomology of $G$ with trivial real coefficients. If $\alpha$ is strongly reducible, then $\alpha = 0$.
\end{theorem}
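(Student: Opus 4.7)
The plan is to reduce the theorem to an explicit cocycle-level primitive construction and to invoke the transgression machinery announced by the paper's title. Since $G$ is locally isomorphic to $\PSL_{2}(\R)$, the continuous bounded cohomology $H^{2}_{\cb}(G;\R)$ is one-dimensional over $\R$, generated by the bounded Euler class $\epsilon_{\b}$. Hence $\alpha^{\prime} = \lambda\,\epsilon_{\b}$ for some $\lambda \in \R$, and the theorem is equivalent to proving that
\[
\epsilon_{\b} \smallsmile \beta \;=\; 0 \qquad \text{for every } \beta \in H^{m}_{\cb}(G;\R) \text{ with } m = n - 2 \geq 1.
\]

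I would work on the standard boundary model of continuous bounded cohomology, associated with the Poisson boundary $S^{1}$ of the symmetric space $\mathbb{H}^{2} = G/K$. Pick a $G$-invariant bounded Borel cocycle $c_{2} \in L^{\infty}_{\mathrm{alt}}((S^{1})^{3})^{G}$ representing $\epsilon_{\b}$ (a normalization of the orientation cocycle, say), together with a bounded representative $c_{m}$ for $\beta$. The cup product is then modelled at the cochain level by the bounded cocycle $c_{2} \cup c_{m}$. Because the continuous cohomology $H^{n}_{\c}(G;\R)$ vanishes for $n > 2$, a continuous---but a priori unbounded---primitive for $c_{2} \cup c_{m}$ automatically exists; the genuine content of the theorem is to produce a primitive that lives inside the bounded complex.

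The decisive step is therefore to construct an explicit bounded $G$-invariant cochain $p \in L^{\infty}_{\mathrm{alt}}((S^{1})^{n})^{G}$ with $dp = c_{2} \cup c_{m}$, using the transgression operator developed in the body of the paper. Heuristically, the operator integrates the $c_{2}$-factor against a geometric kernel associated with $\mathbb{H}^{2}$, producing a bounded expression in fewer boundary variables while the $c_{m}$-factor is carried along; boundedness of the operator on $L^{\infty}$-cochains yields
\[
\|p\|_{\infty} \;\lesssim\; \|c_{2}\|_{\infty}\,\|c_{m}\|_{\infty} < \infty.
\]

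The main obstacle is this very boundedness, together with the identity $dp = c_{2} \cup c_{m}$ holding on the nose. A generic cocycle of degree greater than $2$ admits no bounded primitive---this is precisely the reason $\epsilon_{\b}$ is nontrivial in degree $2$---so the argument must genuinely exploit the factor structure: the transgression kernel should satisfy a cocycle-type identity which, combined with $d c_{m} = 0$, makes the error terms cancel. Once $p$ has been shown to sit in $L^{\infty}_{\mathrm{alt}}((S^{1})^{n})^{G}$ and to satisfy $dp = c_{2} \cup c_{m}$, one reads off $\alpha = [c_{2} \cup c_{m}] = [dp] = 0$ in $H^{n}_{\cb}(G;\R)$.
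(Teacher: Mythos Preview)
Your high-level strategy is correct and matches the paper's: reduce to $G=\PU(1,1)$, use that $H^{2}_{\cb}(G;\R)$ is generated by the class of the orientation cocycle, work in the boundary model on $S^{1}$, and construct a bounded $G$-invariant primitive for $\Or \cup c_{m}$. But everything you have written after that point is a description of what the proof must accomplish, not a proof. The phrases ``integrates the $c_{2}$-factor against a geometric kernel associated with $\mathbb{H}^{2}$'' and ``the transgression kernel should satisfy a cocycle-type identity'' are placeholders; you never define the operator, never verify the primitive identity, and never prove boundedness. You yourself flag boundedness as ``the main obstacle'' and then do not address it. The estimate $\|p\|_{\infty}\lesssim\|c_{2}\|_{\infty}\|c_{m}\|_{\infty}$ is asserted without any mechanism that would produce it, and in fact the paper's argument does \emph{not} yield a bound of this simple multiplicative form.

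What the paper actually does to fill this gap is rather elaborate and not guessable from the statement alone. It introduces first-order differential operators $\L=\L_{A}+i\L_{N}$ and $\Q=\Im(\Id-\Lbar)$ along $P$-orbits and shows they fit into an exact \emph{Cauchy--Frobenius complex} of $K$-equivariant function spaces. The primitive is then built by a ``staircase'' of explicit integral operators $\I$, $\Rop_{\Bc}$, $\Sop$ (averaging over $S^{1}$, integrating along $A$-orbits with basepoint normalization, and a Laplace-type integral along $A$), and boundedness hinges on a notion of \emph{tameness}---uniform boundedness of integrals of $\Re u$ along $A$-orbits---rather than on a direct $L^{\infty}$ estimate. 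The product structure with $\Or$ enters through the concrete computation $\I\L\I\,\Or=(i/\pi)\,z$, which forces the $K$-reduction of the relevant cochain to be purely imaginary; this is what triggers the tameness criterion and hence boundedness of the primitive. None of this apparatus appears in your proposal, so as it stands the argument has a genuine gap at exactly the point where the work lies.
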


Thinking of $G$ as the Hermitian Lie group $\PU(1,1)$, we may also regard Theorem \ref{thm:VanishingForStronglyReducibleClasses} from the following different perspective. Recall that by a result of Burger and Monod, in this case the second continuous bounded cohomology $H^{2}_{\cb}(G;\R)$ is generated by the bounded K\"ahler class $\kappa$ (see Section \ref{subsec:ContinuousBoundedCohomology}). We then consider the \emph{bounded Lefschetz map}
\begin{equation} \label{map:BoundedLefschetzMap} \tag{1}
	\map{L_{\kappa}^{\bullet}}{H^{\bullet}_{\cb}(G;\R)}{H^{\bullet+2}_{\cb}(G;\R)}
\end{equation}
defined by $L_{\kappa}(\alpha) = \kappa \smallsmile \alpha$.

\begin{corollary*}
	The bounded Lefschetz map in \eqref{map:BoundedLefschetzMap} is zero in all positive degrees.
\end{corollary*}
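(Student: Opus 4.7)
The plan is to obtain the corollary as a direct corollary of Theorem \ref{thm:VanishingForStronglyReducibleClasses}, with essentially no new work required. Given any positive integer $n$ and any class $\alpha \in H^{n}_{\cb}(G;\R)$, I would consider the class
\[
L_{\kappa}(\alpha) = \kappa \smallsmile \alpha \in H^{n+2}_{\cb}(G;\R).
\]
Its degree $n+2$ is strictly greater than $2$, and the very formula exhibiting $L_{\kappa}(\alpha)$ as a cup product of the degree-$2$ class $\kappa$ with the degree-$n = (n+2)-2$ class $\alpha$ shows that $L_{\kappa}(\alpha)$ is strongly reducible in the sense of the definition preceding Theorem \ref{thm:VanishingForStronglyReducibleClasses}. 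Hence Theorem \ref{thm:VanishingForStronglyReducibleClasses} applies verbatim and gives $L_{\kappa}(\alpha) = 0$. Since this holds for every $\alpha$, the map $L_{\kappa}^{n}$ vanishes identically for each $n \geq 1$.

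There is no real obstacle to overcome in this argument: the corollary is purely a reformulation of the theorem in the language of the bounded Lefschetz map, and all the substance lives in the proof of Theorem \ref{thm:VanishingForStronglyReducibleClasses} itself. The one thing to verify is that the hypothesis $n > 2$ in the theorem is satisfied by the target degree $n+2$, which is precisely the restriction to positive degrees appearing in the corollary; in degree zero, $L_{\kappa}$ carries $1 \in H^{0}_{\cb}(G;\R) \cong \R$ to $\kappa$, which is nonzero by Burger--Monod, so this restriction is sharp and must be stated.
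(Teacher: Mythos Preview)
Your argument is correct and matches the paper's approach: the corollary is stated immediately after Theorem~\ref{thm:VanishingForStronglyReducibleClasses} with no separate proof, being an evident reformulation of the theorem via the definition of $L_{\kappa}$. Your additional remark on why the restriction to positive degrees is sharp is a nice touch that the paper does not make explicit.
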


Returning to Theorem \ref{thm:VanishingForStronglyReducibleClasses}, we note that in small degrees $n=3,4$, much stronger vanishing theorems apply: Burger and Monod \cite{Burger/On-and-around-the-bounded-cohomology-of-SL2} proved that $H^{3}_{\cb}(G;\R) = 0$, while Hartnick and the author \cite{Hartnick/Bounded-cohomology-via-partial-differential-equations-I} showed that $H^{4}_{\cb}(G;\R) = 0$. In large degree, on the other hand, our Theorem \ref{thm:VanishingForStronglyReducibleClasses} establishes the first non-trivial vanishing result for classes in $H^{n}_{\cb}(G;\R)$ in arbitrary degree $n>4$. The proofs of all these vanishing results crucially rely on the boundary resolution for continuous bounded cohomology due to Ivanov \cite{Ivanov/Foundations-of-the-theory-of-bounded-cohomology} and Burger and Monod \cite{Burger/Bounded-cohomology-of-lattices-in-higher-rank-Lie-groups}. In fact, in this particular resolution all cocycles vanish in degree $n=3$. In degree $n>3$, this is no longer the case and one faces the problem of constructing bounded primitives. This was accomplished in degree $n=4$ by Hartnick and the author \cite{Hartnick/Bounded-cohomology-via-partial-differential-equations-I} by means of a new technique that employs differential equations in order to explicitly construct bounded primitives; the arguments, however, crucially rely on the assumption that $n$ be sufficiently small.

\medskip

In this article, we will take the ideas from \cite{Hartnick/Bounded-cohomology-via-partial-differential-equations-I} further and develop an algebro-analytic framework that allows to overcome any upper bounds on the degree in constructing bounded primitives by means of differential equations. At the heart of this approach lies the \emph{transgression map}
\begin{equation} \label{eqn:IntroductionTransgressionMap} \tag{2}
	\map{\Lambda^{n}}{H^{n-2}(\Ac^{\infty})}{H^{n}_{\cb}(G;\R}) \quad\quad (n>2)
\end{equation}
from the shifted cohomology of a certain cochain complex $\Ac^{\infty}$ to the continuous bounded cohomology of $G$ (see Section \ref{subsec:TransgressionMap}). Its construction is the main theme of this work. Notice that the transgression map is defined in every degree $n>2$. Theorem~\ref{thm:VanishingForStronglyReducibleClasses} is then a consequence of the next theorem, which clarifies how transgression gives rise to the vanishing of strongly reducible bounded cohomology classes.

\begin{theorem} \label{thm:ImageOfTransgressionMap}
	For every $n>2$, the transgression map in \eqref{eqn:IntroductionTransgressionMap} has the following properties.
\begin{enumerate}[leftmargin=1cm,topsep=0.5ex,itemsep=0.5ex]
	\item The cochain complex $\Ac^{\infty}$ is acyclic, and hence all elements in the image of $\Lambda^{n}$ necessarily vanish.
	\item Strongly reducible classes in $H^{n}_{\cb}(G;\R)$ are contained in the image of $\Lambda^{n}$.
\end{enumerate}
\end{theorem}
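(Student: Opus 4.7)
The plan proceeds in two largely independent parts corresponding to assertions (i) and (ii).

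For part (i), I would work directly from the definition of the complex $\Ac^{\infty}$ given in Section~\ref{subsec:TransgressionMap} and exhibit an explicit contracting chain homotopy. Given the algebro-analytic framework developed in this paper and the methodology of \cite{Hartnick/Bounded-cohomology-via-partial-differential-equations-I}, the natural expectation is that $\Ac^{\infty}$ is a complex of smooth cochains on powers of the boundary circle $S^{1} = \partial \mathbb{H}^{2}$ equipped with a differential that combines a simplicial boundary operator with a differential-geometric operator. Acyclicity should then follow either from a Poincar\'{e}-type lemma on the underlying model space or, alternatively, by averaging against an invariant measure on a suitable amenable parabolic subgroup of $G$ (contraction by integration). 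Once acyclicity is in hand, $H^{n-2}(\Ac^{\infty}) = 0$ and consequently $\im \Lambda^{n} = \{0\}$, proving the second assertion of (i) formally.

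For part (ii), the crucial input is the Burger--Monod theorem that $H^{2}_{\cb}(G;\R)$ is one-dimensional, spanned by the bounded K\"{a}hler class $\kappa$. Any strongly reducible class therefore has the form $\alpha = c \, \kappa \smallsmile \alpha''$ with $c \in \R$ and $\alpha'' \in H^{n-2}_{\cb}(G;\R)$, so the task reduces to producing a preimage of $\kappa \smallsmile \alpha''$ under $\Lambda^{n}$. I would work at the cochain level: choose a bounded measurable cocycle $c''$ representing $\alpha''$ in the Burger--Monod--Ivanov boundary resolution, pair it with a smooth but unbounded primitive of a cochain representative of $\kappa$, and verify that the resulting expression defines a cocycle in $\Ac^{\infty}$ in degree $n-2$ whose image under the transgression is $\kappa \smallsmile \alpha''$ in $H^{n}_{\cb}(G;\R)$.

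The main obstacle I anticipate is precisely this lifting step in part (ii). The difficulty is structural: $\kappa$ admits a smooth primitive only at the cost of losing boundedness, while the target of $\Lambda^{n}$ lives in bounded cohomology. The entire point of the transgression construction is to trade an unbounded primitive for a bounded correction, and verifying that the trade performed on the candidate preimage of $\kappa \smallsmile \alpha''$ produces the correct bounded cocycle will require a careful cochain-level bookkeeping that aligns formulas for the simplicial cup product, the transgression operator $\Lambda^{n}$, and the Burger--Monod--Ivanov differential. By contrast, part (i) should reduce to a standard homological argument once the definition of $\Ac^{\infty}$ and its differential are spelled out, so it is really (ii) that carries the geometric and analytic content of the theorem.
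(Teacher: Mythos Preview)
Your plan for (i) is essentially what the paper does: the complex $\Ac^{\infty}$ consists of bounded $K$-invariant $G$-orbitwise smooth functions on $\T^{n+1}$ with bounded derivatives, and the contraction $\I$ of Proposition~\ref{prop:ComplexAIsAcyclic} is precisely integration against the $K$-invariant probability measure $\mu_{K}$ on one circle factor. So (i) is fine.

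For (ii), however, your proposed construction has a degree mismatch and, more importantly, misses the actual mechanism. A primitive of the orientation cocycle lives in degree $1$, so its cup product with $c''$ lands in degree $n-1$, not $n-2$; there is no obvious candidate cocycle in $(\Ac^{\infty})^{n-2}$ to feed into $\Lambda^{n}$. The paper does \emph{not} construct a preimage directly. Instead it proves a characterisation (Proposition~\ref{prop:CriterionForTransgressive}): $\alpha$ is transgressive iff it admits a lift $\beta \in H^{n}(\Ac_{P}^{\infty})$ with $i^{\ast}\circ(\Phi_{\L}^{n-1})^{-1}\beta = 0$ in $H^{n-1}(\Ac_{\t}^{\infty})$. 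One then takes $\beta = [\Or \cup b'']$, so that the cocycle representing $(\Phi_{\L}^{n-1})^{-1}\beta$ is $u = \L\,\I(\Or \cup b'') = (\L\,\I\,\Or)\cup b''$, using that $\L b'' = 0$ by $P$-invariance. The heart of the argument is showing that $v \deq \I u = (\I\,\L\,\I\,\Or)\cup b''$ lies in the subcomplex $\Ac_{\t}^{\infty}$ of \emph{tame} functions, so that $[u]=0$ there. This comes down to the explicit computation $(\I\,\L\,\I\,\Or)(z) = (i/\pi)\,z$ (Lemma~\ref{lemma:FunctionILIOr}), which forces the $K$-reduction $v_{K}$ to be purely imaginary, whence $v$ is tame by the criterion of Lemma~\ref{lemma:CriterionForTameness}.

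In short, the ``trade'' you anticipate is not between an unbounded primitive and a bounded correction, but between the full complex $\Ac^{\infty}(\C_{1})$ and its tame subcomplex $\Ac_{\t}^{\infty}$; the orientation cocycle enters not through a primitive but through the scalar identity $\I\,\L\,\I\,\Or = (i/\pi)\,z$, which is the computation that makes the whole staircase collapse.
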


We will refer to elements in the image of the transgression map as \emph{transgressive} classes. The main ingredient of our proof of Theorem \ref{thm:ImageOfTransgressionMap} is then a cohomological characterization of transgressive classes, see Proposition \ref{prop:CriterionForTransgressive} in Section \ref{subsec:TransgressiveClasses}. Let us note in passing that in view of Monod's conjecture, it appears natural to speculate that the transgression map $\Lambda^{n}$ in \eqref{eqn:IntroductionTransgressionMap} is in fact surjective for every $n>2$.

\medskip

A particular feature of our approach is that it yields explicit formulas for primitives of bounded cocycles. To make this precise, let us assume that $G = \PU(1,1)$ and recall that for all $n\ge0$ the boundary model of Burger and Monod gives rise to an isomorphism
\[
H^{n}_{\cb}(G;\R) \cong H^{n}( L^{\infty}(\T^{\bullet+1},\R)^{G},\de^{\bullet} )
\]
between the continuous bounded cohomology of $G$ and the cohomology of the homogeneous cochain complex
\[
\begin{tikzcd}[cramped]
	0 \arrow[r,rightarrow]
	& L^{\infty}(\T^{1},\R)^{G} \arrow[r,"\de^{0}",rightarrow]
	& L^{\infty}(\T^{2},\R)^{G} \arrow[r,"\de^{1}",rightarrow]
	& L^{\infty}(\T^{3},\R)^{G} \arrow[r,"\de^{2}",rightarrow]
	& \cdots
\end{tikzcd}
\]
of $G$-invariant bounded functions defined on the Furstenberg boundary of the Lie group $G$ (see Section \ref{subsec:BoundaryModel}). In this way, any class $\alpha \in H^{n}_{\cb}(G;\R)$ is identified with the cohomology class $[c]$ of some $G$-invariant bounded cocycle $c \in L^{\infty}(\T^{n+1},\R)^{G}$. We see that $\alpha$ vanishes if and only if the cocycle $c$ admits a $G$-invariant bounded primitive $p \in L^{\infty}(\T^{n},\R)^{G}$ that satisfies the cohomological equation
\begin{equation} \label{eqn:IntroductionPrimitive} \tag{3}
	\de p = c.
\end{equation}
Explicit solutions of this equation, as well as the equation itself, are often closely related with classical transcendental functions and their functional equations. For example, in degree $n=4$ there is an intricate connection with Euler's dilogarithm function and the Spence-Abel functional equation \cite{Bloch/Higher-regulators-algebraic-K-theory-and-zeta-functions-of-elliptic-curves,Burger/On-and-around-the-bounded-cohomology-of-SL2,HartnickOtt/Perturbations-of-the-Spence-Abel-equation-and-deformations-of-the-dilogarithm-function}.

Our next result, which is Theorem \ref{thm:ExplicitFormulasForPrimitives} below, systematically constructs measurable solutions of \eqref{eqn:IntroductionPrimitive} in all degrees $n>2$ by means of certain explicit line integrals, and provides a sufficient criterion for their boundedness. We denote by $L^{0}(\T^{n},\R)^{G}$ the space of $G$-invariant measurable functions (see Section \ref{subsec:BoundedMeasurableFunctions}), by $\Or \in L^{\infty}(\T^{3},\R)^{G}$ the orientation cocycle, and by $\cup$ the natural cup product for cochains on the boundary of $G$ (see Section \ref{subsec:ReducibleClasses}).

\begin{theorem} \label{thm:ExplicitFormulasForPrimitives}
	There exists a linear operator
\[
\map{\P^{n}}{L^{\infty}(\T^{n+1},\R)^{G} \supset \ker \de^{n}}{L^{0}(\T^{n},\R)^{G}} \quad\quad (n>2)
\]
with the following properties.
\begin{enumerate}[leftmargin=1cm,topsep=0.5ex,itemsep=0.5ex]
	\item The operator $\P^{n}$ is defined by Formula \eqref{eqn:DefinitionOfOperatorP} in Section \ref{subsec:TheOperatorP}.
	\item For every $n>2$, and for every $G$-invariant bounded function $c \in L^{\infty}(\T^{n+1},\R)^{G}$ satisfying the cocycle relation $\de c = 0$, the function $\P c \in L^{0}(\T^{n},\R)^{G}$ is a $G$-invariant primitive for $c$ that solves the cohomological equation
\[
\de \P c = c.
\]
	\item Assume in addition that the cocycle $c$ admits a product decomposition
\begin{equation} \label{eqn:ProductDecompositionForCocycle} \tag{4}
	c = \Or \cup c^{\prime}
\end{equation}
for some cocycle $c^{\prime} \in L^{\infty}(\T^{n-1},\R)^{G}$, where $\Or \in L^{\infty}(\T^{3},\R)^{G}$ denotes the orientation cocycle (see Section \ref{subsec:ReducibleClasses}). Then the primitive $\P c$ is bounded.
\end{enumerate}
\end{theorem}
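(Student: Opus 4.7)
The plan is to construct $\P^n$ as an explicit homotopy operator on the boundary bar-complex from Section~\ref{subsec:BoundaryModel}. In rough form, I would define $(\P^n c)(z_1,\dots,z_n)$ as a line integral of $c(z_0,z_1,\dots,z_n)$ in its first argument, with $z_0$ ranging along a path on $\T$ whose endpoints and parametrization depend measurably and $G$-equivariantly on the remaining arguments $z_1,\dots,z_n$; this is the content of formula \eqref{eqn:DefinitionOfOperatorP}. The $G$-invariance of $\P^n c$ then follows from $G$-invariance of arc-length measure on $\T$ combined with $G$-invariance of $c$, and measurability of $\P^n c$ is a Fubini argument. Linearity is immediate from the definition, so (i) is settled once the formula is written down.

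For part (ii), the cohomological equation $\de \P^n c = c$ should come out of the standard homotopy identity $\de \P + \P \de = \Id$ applied to cocycles. Writing the coboundary as $\de f(z_0,\dots,z_n) = \sum_{i}(-1)^i f(z_0,\dots,\widehat{z_i},\dots,z_n)$ and applying $\de$ to the line integral defining $\P^n c$, one swaps the finite sum with the integral: the terms with $i\ge 1$ reconstruct the line integral of $\de c$, which vanishes by hypothesis, while the $i=0$ term, combined with a boundary evaluation coming from the dependence of the integration path on its parameters, reproduces $c$ itself up to a cancellation among signed endpoint contributions. Care is needed to verify that everything that appears is integrable, but this is routine since $c$ is bounded.

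The main obstacle is part (iii). For a generic bounded cocycle $c$, the function $\P^n c$ is only measurable and can perfectly well be unbounded, because the integration path can be arbitrarily long in arc-length and the integrand $c$ exhibits no cancellation along it. The product hypothesis in \eqref{eqn:ProductDecompositionForCocycle} is what rescues boundedness: the orientation cocycle $\Or$ is locally constant of modulus one on the complement of the large diagonal in $\T^3$, and its presence in the integrand $\Or\cup c'$ induces a piecewise constant sign pattern in the variable $z_0$ whose jump points are among the other arguments $z_1,\dots,z_n$. The strategy is to split the integration path into the maximal open subintervals on which $\Or$ is constant in $z_0$, and then use the cocycle relations $\de \Or=0$ and $\de c'=0$ to telescope the resulting subinterval contributions, thereby expressing $\P^n c$ as a finite alternating sum of evaluations of $c'$ on boundary tuples that are uniformly controlled by the input.

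Since $c'$ is itself bounded, the telescoped expression is manifestly bounded, provided the telescoping is legitimate. Showing that the signed contributions at the interior discontinuities of $\Or$ assemble correctly — in particular, that they cancel pairwise up to a bounded $G$-invariant global term — is the technical heart of the proof, and is precisely where the product structure enters essentially. I expect this telescoping step to occupy most of the work, together with the verification that the resulting combinatorial expression agrees almost everywhere with $\P^n c$ and lies in $L^{\infty}(\T^n,\R)^G$.
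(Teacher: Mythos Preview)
Your proposal contains a genuine gap at the very first step: arc-length measure on $\T$ is \emph{not} $G$-invariant. The group $G=\PU(1,1)$ acts on $S^{1}$ by M\"obius transformations, and only the maximal compact $K$ preserves arc length; the hyperbolic and parabolic elements do not. Consequently, a line integral of $c(z_{0},z_{1},\dots,z_{n})$ in $z_{0}$ against arc length produces a $K$-invariant primitive, not a $G$-invariant one, regardless of how cleverly the endpoints are chosen. Making the integration path $G$-equivariant does not save this, because the pushforward of arc length under $g\in G$ acquires a nontrivial Jacobian factor. In fact there is no $G$-invariant probability measure on $S^{1}$ at all, so no integration-over-one-variable homotopy can directly yield $G$-invariance.

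This is precisely the difficulty the paper's machinery is built to overcome. The paper's operator $\P^{n}$ is \emph{not} a single line integral: it is the staircase formula \eqref{eqn:DefinitionOfOperatorP}, whose first term $\I c$ is exactly your naive $K$-invariant primitive, and whose second term $\de\Rop_{\Bc}u$ is a correction obtained by solving partial differential equations along $P$-orbits (the Cauchy and Frobenius problems of Sections~\ref{SubSec:CauchyProblem}--\ref{SubSec:FrobeniusProblem}). The Cauchy operator $\L=\L_{A}+i\L_{N}$ measures the failure of $P$-invariance of $\I c$, and the Cauchy--Frobenius complex \eqref{map:CauchyFrobeniusComplex} is what guarantees a correction exists. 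Boundedness in part~(iii) then hinges not on a combinatorial telescoping of $\Or$, but on the analytic notion of \emph{tameness} together with the explicit computation $\I\L\I\Or(z)=(i/\pi)\,z$ (Lemma~\ref{lemma:FunctionILIOr}), which forces the $K$-reduction of the relevant cochain to be purely imaginary and hence tame by Lemma~\ref{lemma:CriterionForTameness}. Your telescoping heuristic does capture something real---the product structure does ultimately reduce to the single explicit factor $\Or$---but the mechanism is analytic (tameness and integration along $A$-orbits via $\Rop_{\Bc}$) rather than combinatorial.
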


We remark that in those cases in which the cocycle $c$ does not admit a product decomposition as in \eqref{eqn:ProductDecompositionForCocycle}, it is presently not known whether the solution $p = \P c$ of \eqref{eqn:IntroductionPrimitive} provided by Theorem~\ref{thm:ExplicitFormulasForPrimitives} is bounded or not.

\medskip

The article is organized as follows. In Section \ref{sec:FunctionSpaces}, we fix some notation and terminology, and define several function spaces that will later be used when working with differential equations in the context of bounded cohomology. Section \ref{sec:Cohomology} collects basic facts about the continuous bounded cohomology of Lie groups, and studies the cohomological properties of the function spaces defined in the previous section. In Section \ref{sec:CauchyFrobeniusComplex}, we introduce the Cauchy-Frobenius differential complex. We construct solutions of the corresponding differential equations and study their boundedness properties. In Section \ref{sec:Transgression}, we combine the cohomological results from Section \ref{sec:Cohomology} with the analytic results from Section \ref{sec:CauchyFrobeniusComplex} in order to define the transgression map in \eqref{eqn:IntroductionTransgressionMap}. We investigate strongly reducible bounded cohomology classes and prove Theorem \ref{thm:VanishingForStronglyReducibleClasses} and Theorem \ref{thm:ImageOfTransgressionMap}. The final Section \ref{sec:ConstructionOfPrimitives} is concerned with the explicit construction of solutions for \eqref{eqn:IntroductionPrimitive}, leading to a proof of Theorem \ref{thm:ExplicitFormulasForPrimitives}.

\bigskip

\noindent \textbf{Acknowledgements.} \!The author wishes to thank Y.\,Benoist, M.\,Burger, O.\,Forster, T.\,Hart-nick, G.\,Kings, R.\,Mazzeo, M.\,Puschnigg, J.\,Schmidt, J.\,Swoboda, R.\,Weissauer, A.\,Wienhard, F.\,Ziltener, and M.\,Zworski for helpful discussions and suggestions, as well as I.\,Agol and the UC Berkeley Mathematics Department for their hospitality and excellent working conditions. He was supported by the European Research Council under ERC-Consolidator Grant 614733 ``Deformation Spaces of Geometric Structures'', and by the Priority Program 2026 ``Geometry at Infinity'' of the German Research Foundation under DFG grant 340014991. The author further acknowledges support from U.S.\,National Science Foundation grants DMS 1107452, 1107263, 1107367 ``RNMS:\,GEometric structures And Representation varieties'' (the GEAR Network).

\section{Function spaces}
\label{sec:FunctionSpaces}

\subsection{The Lie group $G$}
\label{subsec:LieGroupG}

Let us fix the Lie group $G \deq \PU(1,1)$. Elements of this Lie group are represented by matrices of the form
\[
g_{a,b} \deq \begin{pmatrix} a & b \\ \overline{b} & \overline{a} \end{pmatrix},
\]
with complex numbers $a,b \in \C$ satisfying $|a|^{2}-|b|^{2} = 1$. Denote by $[g_{a,b}]$ the equivalence class of the matrix $g_{a,b}$ in $G$. In particular, for $t \in \R$ we fix the notation
\[
k_{t} \deq \big[ g_{e^{\i t/2}, \, 0} \big], \quad a_{t} \deq \big[ g_{\cosh(-t/2), \, \sinh(-t/2)} \big], \quad n_{t} \deq \big[ g_{1+\frac{1}{2}\i t, \, -\frac{1}{2}\i t} \big].
\]
Note that these elements are elliptic, hyperbolic and parabolic, respectively. They give rise to Lie subgroups
\[
K \deq \{ k_{t} \,|\, t \in \R \}, \quad A \deq \{ a_{t} \,|\, t \in \R \}, \quad N \deq \{ n_{t} \,|\, t \in \R \},
\]
which are $1$-parameter subgroups in the sense that the maps $t \mapsto k_{t}$, $t \mapsto a_{t}$ and $t \mapsto n_{t}$ are smooth homomorphisms $\R \to G$. The group $K$ is a maximal compact subgroup of $G$. It is isomorphic with the unit circle $S^{1}$ via the identification $k_{t} \mapsto e^{\i\,t}$. For later reference, we note that $A$ normalizes $N$, and in particular, that there is a relation
\begin{equation} \label{eqn:ANormalizesN}
	a_{s}.n_{t}.a_{s}^{-1} = n_{e^{-s} \cdot t}
\end{equation}
for $s,t \in \R$. The product $P \deq AN$ is a parabolic subgroup of $G$. Moreover, every elliptic, hyperbolic or parabolic element in $G$ is conjugate to an element in the subgroup $K$, $A$ or $N$, respectively. In this way, we obtain the Iwasawa decomposition $G = KAN$ and a Cartan decomposition $G = KAK$. Note that the Iwasawa decomposition is unique, while the Cartan decomposition is not. We will write a Cartan decomposition for any $g \in G$ in the form
\begin{equation} \label{eqn:CartanDecomposition}
	g = k^{\prime} \, a_{t} \, k
\end{equation}
with elements $k, k^{\prime} \in K$ and $a_{t} \in A$ for some $t \in \R$. For more details see \cite[Ch.\,VI]{Knapp/Lie-groups-beyond-an-introduction} and \cite[Ch.\,V]{Sugiura/Unitary-representations-and-harmonic-analysis}.

\subsection{Boundary action}
\label{subsec:BoundaryAction}

$G$ acts smoothly on the closed unit disk $\overline{\Dbb} = \{ z \in \C \,|\, |z| \le 1 \}$ by fractional linear transformations. This action restricts to a smooth $G$-action $G \times S^{1} \to S^{1}$ on the unit circle $S^{1} \subset \C$, denoted by $(g,z) \mapsto g.z$. Thinking of $S^{1}$ as the Furstenberg boundary of $G$, we will refer to this action as the \emph{boundary action} of $G$. Recall that the boundary action is strictly $3$-transitive \cite[Thm.\,11.1]{Kerby/On-infinite-sharply-multiply-transitive-groups} and amenable \cite[Prop.\,4.3.2]{Zimmer/Ergodic-theory-and-semisimple-groups}. The induced action of the maximal compact subgroup $K$ is by counter-clockwise rotation, given by $k_{t}.z = e^{\i t} \cdot z$, while the actions of the subgroups $A$ and $N$ have fixed point sets $\{\pm 1\}$ and $\{1\}$, respectively.

Consider the $n$-torus $\T^{n} \deq (S^{1})^{n}$ for $n \ge 1$. Its points are denoted by $\z = (z_{0},\ldots,z_{n-1})$ with $z_{j} \in S^{1}$ for all $0 \le j \le n-1$. Let us further denote by $\T^{(n)} \subset \T^{n}$ the \emph{configuration space} of configurations of $n$ pairwise distinct points on $S^{1}$, which forms the open subset of $\T^{n}$ consisting of all points $\z = (z_{0},\ldots,z_{n-1})$ satisfying $z_{i} \neq z_{j}$ for all $0 \le i < j \le n-1$. We further introduce the open subset $\To^{(n)} \subset \T^{(n)}$ of all configurations $\z = (z_{0},\ldots,z_{n-1})$ with the additional property that $z_{j} \neq 1$ for all $0 \le j \le n-1$.

The boundary action of $G$ gives rise to a smooth diagonal action of $G$ on the torus $\T^{n}$. We will always consider $\T^{n}$ as a $G$-space in this sense, denoting the action on points by $g.\z = (g.z_{0},\ldots,g.z_{n-1})$. Observe that the $G$-action on $\T^{n}$ restricts to a $G$-action on the configuration space $\T^{(n)}$. Since the boundary action of $G$ is strictly $3$-transitive, it follows that $G$ acts freely on $\T^{(n)}$ as long as $n \ge 3$. Observe moreover that, since $1$ is a fixed point for the action of $P = AN$, the $P$-action on $\T^{(n)}$ preserves the subset $\To^{(n)}$.

Let us write $\mu_{K}$ for the unique $K$-invariant probability measure on the unit circle $S^{1}$. It induces a $K$-invariant probability measure $\mu_{K}^{\otimes n}$ on the torus $\T^{n} = (S^{1})^{n}$. We fix this measure on $\T^{n}$, and will not usually indicate it in the notation. Notice that both the configuration space $\T^{(n)}$ and its open subset $\To^{(n)}$ are subspaces of full measure in $\T^{n}$.

\subsection{Coefficient modules}
\label{subsec:CoefficientModules}

Fix an integer $\m \in \Z$. We will write $\C_{\m}$ for the $K$-module $K \times \C \to \C$ defined by the standard linear action with weight $\m$ of the maximal compact subgroup $K \cong S^{1}$ on $\C$ given by
\[
k_{t}.z \deq e^{\i \m t} \cdot z
\]
for $t \in \R$ and $z \in \C$. Note that $\C_{0}$ is a trivial $K$-module, and we further define $\C_{0} = \C$ and its subspace $\R$ to be trivial $G$-modules.

\subsection{Bounded measurable functions}
\label{subsec:BoundedMeasurableFunctions}

We denote by $\Ls^{0}(\T^{n},\K)$ the space of complex measurable functions on $\T^{n}$ and by $\Ls^{\infty}(\T^{n},\K) \subset \Ls^{0}(\T^{n},\K)$ the subspace of bounded functions. Throughout this article, we will adhere to the convention from \cite{Monod/Equivariant-measurable-liftings} that $\Ls^{\infty}(\T^{n},\K)$ consists of actual bounded functions, excluding all essentially bounded functions that are not bounded.

For $p\in\{0,\infty\}$, the quotient of the space $\Ls^{p}(\T^{n},\K)$ defined by identifying functions that take the same values almost everywhere in $\T^{n}$ is denoted by $L^{p}(\T^{n},\K)$. We remind the reader that elements of this space are function classes rather than actual functions, and identities for such function classes correspond to identities for the representing functions that hold pointwise only on the complement of a subset of measure zero. Throughout we will follow the standard convention not to distinguish between functions and function classes in the notation.

We denote by $\Ls^{p}(\T^{n},\C)^{G}$ and $L^{p}(\T^{n},\C)^{G}$ the corresponding subspaces of $G$-invariant functions. Recall that functions $f$ contained in the former space satisfy $f(g.\z) = f(\z)$ for all $\z \in \T^{n}$ and $g \in G$, while functions $f$ in the latter space satisfy this identity only for almost every $\z \in \T^{n}$, for each $g \in G$.

For later reference, we observe that the canonical projection $\Ls^{\infty}(\T^{n},\K) \to L^{\infty}(\T^{n},\K)$ is $G$-equivariant and hence gives rise to a canonical map
\[
\Ls^{\infty}(\T^{n},\K)^{G} \to L^{\infty}(\T^{n},\K)^{G}.
\]
The properties of this map will be further discussed in Section \ref{subsec:EquivariantMeasurableLiftings}.

\subsection{Orbitwise smooth functions}
\label{subsec:OrbitwiseSmoothFunctions}

We recall from \cite{Hartnick/Bounded-cohomology-via-partial-differential-equations-I} the following definition.

\begin{definition}
	Let $H$ be any Lie subgroup of $G$. A measurable function $f \in \Ls^{0}(\T^{n},\K)$ is called \emph{$H$-orbitwise smooth} if for every point $\z \in \T^{n}$ the map
\begin{equation} \label{map:DefinitionSmoothness}
	H \to \K, \quad h \mapsto f(h.\z)
\end{equation}
is smooth.
\end{definition}

The space of complex $H$-orbitwise smooth measurable functions is denoted by $\Ss_{H}(\T^{n},\K)$. We will henceforth apply this concept in the cases where the subgroup $H$ is either the group $G$ itself or the parabolic subgroup $P = AN$. Notice that there is a natural inclusion $\Ss_{G}(\T^{n},\K) \subset \Ss_{P}(\T^{n},\K)$.

We denote by $\L_{K}$, $\L_{A}$ and $\L_{N}$ the fundamental vector fields for the action of the $1$-parameter subgroups $K$, $A$ and $N$ on $\T^{n}$, given pointwise by $\L_{K}(\z) = \left. \dd{}{t} \right|_{t=0} k_{t}.\z$, and likewise for $\L_{A}$ and $\L_{N}$. In order to obtain a more concrete description of these vector fields, we think of the unit circle $S^{1}$ as the quotient $S^{1} = \R/2\pi\Z$, covered by the real line $\R$. We choose a coordinate $\th \in \R$ defined by the exponential mapping $z = e^{\i \th}$ for $z\in S^{1}$. This coordinate will be called the \emph{angular coordinate} for $S^{1}$. Note that it is unique only up to multiples of $2\pi$. In this way, the torus $\T^{n}$ is endowed with angular coordinates $\thul = (\th_{0},\ldots,\th_{n-1}) \in \R^{n}$, where each coordinate $\th_{j} \in \R$ is unique up to multiples of $2\pi$. We may therefore consider functions on $\T^{n}$ as functions on $\R^{n}$ that are $2\pi$-periodic in each variable $\th_{j}$.

The boundary action of $G$ induces a smooth $G$-action on the angular coordinate $\th$, defined by the relation $g.z = e^{\i (g.\th)}$. Note in particular that $K$ acts on $\th$ by translation, and that the $K$-invariant measure $\mu_{K}$ on $S^{1}$ corresponds to the usual Lebesgue measure on $\R$, normalized by a factor of $1/2\pi$. We obtain a corresponding diagonal action of $G$ on angular coordinates for $\T^{n}$, denoted by $g.\thul = (g.\th_{0},\ldots,g.\th_{n-1})$. A short calculation (cf.~\cite[Sec.\,3.2]{Hartnick/Bounded-cohomology-via-partial-differential-equations-I}) shows that in angular coordinates the vector fields $\L_{K}$, $\L_{A}$ and $\L_{N}$ are given by
\begin{equation} \label{eqn:FundamentalVectorFields}
	\L_{K} = \sum_{j=0}^{n-1} \pd{}{\th_{j}}, \quad \L_{A} = \sum_{j=0}^{n-1} \sin(\th_{j}) \pd{}{\th_{j}}, \quad \L_{N} = \sum_{j=0}^{n-1} \bigl( 1-\cos(\th_{j}) \bigr) \pd{}{\th_{j}}
\end{equation}
and satisfy the commutator relations
\begin{equation} \label{eqn:CommutatorRelationsKAN}
	\left[ \L_{K},\L_{A} \right] = \L_{K}-\L_{N}, \quad \left[ \L_{K},\L_{N} \right] = \L_{A}, \quad \left[ \L_{A},\L_{N} \right] = \L_{N}.
\end{equation}

The fundamental vector fields give rise to first order linear partial differential operators
\[
\map{\L_{K},\L_{A},\L_{N}}{\Ss_{G}(\T^{n},\K)}{\Ss_{G}(\T^{n},\K)}
\]
and
\[
\map{\L_{A},\L_{N}}{\Ss_{P}(\T^{n},\K)}{\Ss_{P}(\T^{n},\K)}
\]
acting on orbitwise smooth functions. For example, the action of the operator $\L_{K}$ is given by $(\L_{K} f)(\z) = \left. \dd{}{t} \right|_{t=0} f(k_{t}.\z)$, and likewise for $\L_{A}$ and $\L_{N}$.

For later reference, we record the following useful formula. Consider $f \in \Ss_{P}(\T^{n},\K)$ and let $\z \in \T^{n}$. Since $t \mapsto a_{t}$ is a $1$-parameter group, we have
\[
(\L_{A} f)(a_{t}.\z) = \dd{}{t} f(a_{t}.\z).
\]
Integrating this identity then yields
\begin{equation} \label{eqn:IntegrateAlongA}
	f(a_{T}.\z) = f(\z) + \int_{0}^{T} (\L_{A} f)(a_{t}.\z) \, \d t
\end{equation}
for every $T \in \R$.

For any positive integer $\ell > 0$, and for any collection of indices $(i_{1},\ldots,i_{\ell}) \in \{K,A,N\}^{\ell}$ we define the $\ell$-th order linear partial differential operators
\[
\L_{i_{1},\ldots,i_{\ell}} \deq \L_{i_{1}} \circ \cdots \circ \L_{i_{\ell}}.
\]
Here we think of the letters $K$, $A$ and $N$ as formal indices.

\begin{definition}
	A $G$-orbitwise smooth function $f \in \Ss_{G}(\T^{n},\K)$ is said to have \emph{bounded $G$-derivatives} if all of its directional derivatives $\L_{i_{1},\ldots,i_{\ell}} f \in \Ss_{G}(\T^{n},\K)$ are bounded for all $\ell > 0$ and all $(i_{1},\ldots,i_{\ell}) \in \{K,A,N\}^{\ell}$.

Likewise, a $P$-orbitwise smooth function $f \in \Ss_{P}(\T^{n},\K)$ is said to have \emph{bounded $P$-derivatives} if its directional derivatives $\L_{i_{1},\ldots,i_{\ell}} f \in \Ss_{P}(\T^{n},\K)$ are bounded for all $\ell > 0$ and all $(i_{1},\ldots,i_{\ell}) \in \{A,N\}^{\ell}$.
\end{definition}

We denote by $\Ss^{\b}_{G}(\T^{n},\K)$ the space of all $G$-orbitwise smooth functions with bounded $G$-derivatives, and by $\Ss^{\b}_{P}(\T^{n},\K)$ the space of all $P$-orbitwise smooth functions with bounded $P$-derivatives. Notice the canonical inclusion $\Ss^{\b}_{G}(\T^{n},\K) \subset \Ss^{\b}_{P}(\T^{n},\K)$.

\subsection{$K$-equivariant functions}
\label{subsec:KEquivariantFunctions}

Recall from Section \ref{subsec:CoefficientModules} the definition of the  coefficient module $\C_{\m}$ for $\m \in \Z$. We denote by $\Ls^{0}(\T^{n},\C_{\m})^{K}$ the space of \emph{$K$-equivariant} measurable functions with values in the $K$-module $\C_{\m}$, i.e., functions $f$ satisfying
\begin{equation} \label{eqn:KEquivariance}
	f(k_{t}.\z) = e^{\i \m t} \cdot f(\z)
\end{equation}
for all $\z \in \T^{n}$ and $t\in\R$. Note that in the case $\m=0$, such functions are precisely the $K$-invariant functions. We moreover denote by $\Ss_{G}(\T^{n},\C_{\m})^{K} \subset \Ls^{0}(\T^{n},\C_{\m})^{K}$ the subspace of $G$-orbitwise smooth functions, and by $\Ss^{\b}_{G}(\T^{n},\C_{\m})^{K} \subset \Ss_{G}(\T^{n},\C_{\m})^{K}$ the subspace of functions with bounded $G$-derivatives. For later reference, we provide the following infinitesimal characterization of $K$-equivariance.

\begin{lemma} \label{lemma:InvarianceAndEquivarianceUnderK}
	Fix $\m \in \Z$. A $G$-orbitwise smooth function $f \in \Ss_{G}(\T^{n},\C_{\m})$ is $K$-equivariant if and only if it satisfies the differential equation
\begin{equation} \label{eqn:InfinitesimalKEquivariance}
	\L_{K} f - \i \m \cdot f = 0.
\end{equation}
\end{lemma}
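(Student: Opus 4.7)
The plan is to prove the two directions separately, both by reduction to an ordinary differential equation along the $K$-orbit through a fixed point $\z \in \T^{n}$. Since $\{k_{t}\}_{t \in \R}$ is a $1$-parameter subgroup and $f$ is $G$-orbitwise smooth, the function $t \mapsto f(k_{t}.\z)$ is smooth on all of $\R$, so differentiation in $t$ is justified at every point, not just at $t=0$.

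For the forward implication, assume that $f$ is $K$-equivariant, so that $f(k_{t}.\z) = e^{\i \m t} f(\z)$ holds for every $\z$ and every $t \in \R$. Differentiating this identity at $t=0$ yields $(\L_{K} f)(\z) = \i \m \cdot f(\z)$ by the very definition of the fundamental vector field $\L_{K}$, which is exactly \eqref{eqn:InfinitesimalKEquivariance}.

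For the converse, assume that $\L_{K} f = \i \m \cdot f$ and fix $\z \in \T^{n}$. The key observation is that because $\{k_{t}\}$ is a $1$-parameter group, the cocycle relation $k_{t_{0}+s} = k_{s} k_{t_{0}}$ implies
\[
\dd{}{t}\bigg|_{t=t_{0}} f(k_{t}.\z) = \dd{}{s}\bigg|_{s=0} f(k_{s}.(k_{t_{0}}.\z)) = (\L_{K} f)(k_{t_{0}}.\z)
\]
for every $t_{0} \in \R$. I would then introduce the auxiliary function $\phi_{\z}(t) \deq e^{-\i \m t} f(k_{t}.\z)$. A direct computation using the product rule and the assumed differential equation gives
\[
\phi_{\z}^{\prime}(t) = -\i \m \, e^{-\i \m t} f(k_{t}.\z) + e^{-\i \m t} (\L_{K} f)(k_{t}.\z) = e^{-\i \m t} \bigl( (\L_{K} f)(k_{t}.\z) - \i \m f(k_{t}.\z) \bigr) = 0.
\]
Hence $\phi_{\z}$ is constant, and evaluating at $t=0$ yields $e^{-\i \m t} f(k_{t}.\z) = f(\z)$, which rearranges to the $K$-equivariance identity \eqref{eqn:KEquivariance}.

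There is no serious obstacle here; the statement is essentially the familiar equivalence between an action of a $1$-parameter group with prescribed weight and its infinitesimal generator. The only subtle point to keep track of is the global validity of the identity $\frac{\d}{\d t} f(k_{t}.\z) = (\L_{K} f)(k_{t}.\z)$ at arbitrary $t$, which rests on the $1$-parameter subgroup property of $K$ and on the $G$-orbitwise smoothness of $f$; without this, one could integrate the differential equation only in an infinitesimal neighbourhood of $t=0$.
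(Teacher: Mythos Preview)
Your proof is correct. The forward direction matches the paper verbatim. For the converse, the paper takes a slightly different tack: it writes $f(z_{0},\ldots,z_{n-1}) = z_{0}^{\m} \cdot \ftilde(z_{0},\ldots,z_{n-1})$, computes that the hypothesis forces $\L_{K}\ftilde = 0$, and then invokes connectedness of $K$-orbits to conclude that $\ftilde$ is $K$-invariant, from which the equivariance of $f$ follows. Your argument instead introduces the integrating factor $e^{-\i\m t}$ and integrates the first-order linear ODE directly along the orbit. The two are equivalent ways of solving the same ODE; your version is arguably a bit more intrinsic since it does not single out the coordinate $z_{0}$, while the paper's version makes the reduction to the weight-zero case ($K$-invariance) explicit. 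Either way the essential content is the same: the $1$-parameter group property gives the global identity $\frac{\d}{\d t}f(k_{t}.\z) = (\L_{K}f)(k_{t}.\z)$, and connectedness of the orbit lets one pass from the infinitesimal condition to the integrated one.
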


\begin{proof}
	Let $\m \in \Z$ and $f \in \Ss_{G}(\T^{n},\C_{\m})$. Assume first that $f$ is $K$-equivariant. It follows from \eqref{eqn:KEquivariance} that
\[
(\L_{K} f) (\z) = \left. \dd{}{t} \right|_{t=0} f(k_{t}.\z) = \left. \dd{}{t} \right|_{t=0} e^{\i \m t} \cdot f(\z) = \i \m \cdot f(\z)
\]
for all $\z \in \T^{n}$ and $t \in \R$. For the converse, assume that $f$ satisfies \eqref{eqn:InfinitesimalKEquivariance}. We may then define an orbitwise smooth function $\ftilde \in \Ss_{G}(\T^{n},\C)$ by the relation
\[
f(z_{0},\ldots,z_{n-1}) = z_{0}^{\m} \cdot \ftilde(z_{0},\ldots,z_{n-1})
\]
for all $(z_{0},\ldots,z_{n-1}) \in \T^{n}$. Then
\[
0 = (\L_{K} f) (z_{0},\ldots,z_{n-1}) - \i \m \cdot f(z_{0},\ldots,z_{n-1}) = z_{0}^{\m} \cdot (\L_{K} \ftilde) (z_{0},\ldots,z_{n-1}).
\]
Hence $\L_{K} \ftilde = 0$. Since $\ftilde$ is $G$-orbitwise smooth and $G$-orbits are connected, this implies that $\ftilde$ is $K$-invariant. Thus
\[
f(k_{t}.z_{0},\ldots,k_{t}.z_{n-1}) = e^{\i \m t} \cdot z_{0}^{\m} \cdot \ftilde(z_{0},\ldots,z_{n-1}) = e^{\i \m t} \cdot f(z_{0},\ldots,z_{n-1})
\]
for all $(z_{0},\ldots,z_{n-1}) \in \T^{n}$ and $t\in\R$.
\end{proof}

To simplify notation, let us write
\[
\As(\T^{n},\C_{\m}) \deq \Ss^{\b}_{G}(\T^{n},\C_{\m})^{K}
\]
for the space of $G$-orbitwise smooth $K$-equivariant $\C_{\m}$-valued functions with bounded derivatives, and denote by $\As^{\infty}(\T^{n},\C_{\m}) \subset \As(\T^{n},\C_{\m})$ the subspace of bounded functions. We further denote by $A(\T^{n},\C_{\m})$ and $A^{\infty}(\T^{n},\C_{\m})$ the quotients of the spaces $\As(\T^{n},\C_{\m})$ and $\As^{\infty}(\T^{n},\C_{\m})$ defined by identifying functions that take the same values pointwise on the configuration space $\T^{(n)} \subset \T^{n}$. This means that representatives of a function class in $A(\T^{n},\C_{\m})$ may differ only on the complement $\T^{n} \setminus \T^{(n)}$ of the configuration space. In particular, identities for such function classes correspond to identities for the representing functions that hold pointwise on $\T^{(n)}$. Notice that this definition makes sense since the subset $\T^{(n)} \subset \T^{n}$ is invariant under the action of $G$.

We denote by $A(\T^{n},\R)^{P}$ and $A^{\infty}(\T^{n},\R)^{P}$ the spaces of $P$-invariants in the spaces $A(\T^{n},\R)$ and $A^{\infty}(\T^{n},\R)$. Function classes in these spaces are represented by functions $f$ contained in $\As(\T^{n},\R)$ and $\As^{\infty}(\T^{n},\R)$, respectively, that are $P$-invariant on the configuration space $\T^{(n)}$, i.e., they satisfy
\begin{equation} \label{eqn:GInvarianceInA}
	f(p.\z) = f(\z)
\end{equation}
for all $\z \in \T^{(n)}$ and $p \in P$. Notice that this identity is not required to hold for points $\z \in \T^{n} \setminus \T^{(n)}$ in the complement of $\T^{(n)}$. Moreover, since $G = KAN$ by the Iwasawa decomposition, it also follows that $f$ is $G$-invariant on $\T^{(n)}$, i.e., it satisfies $f(g.\z) = f(\z)$ for all $\z \in \T^{(n)}$ and $g \in G$.

\begin{definition}
	Let $\mu \in \Z$. A function $f \in \As^{\infty}(\T^{n},\C_{\m})$ is called \emph{tame} if its real part $\Re f$ satisfies
\[
\sup \left \{ \left\lvert \int_{0}^{T} (\Re f)(a_{t}.\z) \, \d t \right\rvert \,:\, T \in \R, \, \z \in \T^{n} \right\} < \infty.
\]
\end{definition}

We introduce the notation $\As^{\infty}_{\t}(\T^{n},\C_{\m}) \subset \As^{\infty}(\T^{n},\C_{\m})$ for the subspace of tame functions. The image of this space under the quotient map $\As^{\infty}(\T^{n},\C_{\m}) \to A^{\infty}(\T^{n},\C_{\m})$ will be denoted by $A^{\infty}_{\t}(\T^{n},\C_{\m})$.

\section{Cohomology}
\label{sec:Cohomology}

\subsection{Continuous bounded cohomology}
\label{subsec:ContinuousBoundedCohomology}

We briefly review some basic facts about the continuous bounded cohomology of $G$. Let us denote for all $n\ge0$ by $C_{\b}(G^{n+1},\R)$ the space of bounded continuous functions $G^{n+1} \to \R$, and let $C_{\b}(G^{n+1},\R)^{G} \subset C_{\b}(G^{n+1},\R)$ be the subspace of functions that are invariant under the diagonal action of $G$ on the product $G^{n+1}$. Then the \emph{continuous bounded cohomology} of $G$ with trivial real coefficients is defined as the cohomology of the cochain complex
\[
\begin{tikzcd}[cramped]
	0 \arrow[r,rightarrow]
	& C_{\b}(G,\R)^{G} \arrow[r,"\mathfrak{d}^{0}",rightarrow]
	& C_{\b}(G^{2},\R)^{G} \arrow[r,"\mathfrak{d}^{1}",rightarrow]
	& C_{\b}(G^{3},\R)^{G} \arrow[r,"\mathfrak{d}^{2}",rightarrow]
	& \cdots
\end{tikzcd}
\]
where the map $\map{\mathfrak{d}^{n}}{C_{\b}(G^{n+1},\R)^{G}}{C_{\b}(G^{n+2},\R)^{G}}$ given by
\[
(\mathfrak{d}^{n} f)(g_{0},\ldots,g_{n+1}) \deq \sum_{j=0}^{n+1} (-1)^{j} \cdot f(g_{0},\ldots,\widehat{g_{j}},\ldots,g_{n+1})
\]
denotes the homogeneous coboundary operator \cite{Burger/Bounded-cohomology-of-lattices-in-higher-rank-Lie-groups,Burger/Continuous-bounded-cohomology-and-applications-to-rigidity-theory,Monod/Continuous-bounded-cohomology-of-locally-compact-groups}.

The continuous bounded cohomology of $G$ is endowed with a natural ring structure determined by the \emph{cup product}
\[
\map{\smallsmile}{H_{\cb}^{n}(G;\R) \otimes H_{\cb}^{m}(G;\R)}{H_{\cb}^{n+m}(G;\R)},
\]
see for example \cite[Sec.\,1.8]{Burger/Continuous-bounded-cohomology-and-applications-to-rigidity-theory}. This cup product is induced by a corresponding cup product
\[
\map{\smallsmile}{C_{\b}(G^{n+1},\R) \otimes C_{\b}(G^{m+1},\R)}{C_{\b}(G^{n+m+1},\R)}
\]
on the level of cochains, which is defined by
\[
(c \smallsmile e)(g_{0},\ldots,g_{n+m}) \deq c(g_{0},\ldots,g_{n}) \cdot e(g_{n},g_{n+1},\ldots,g_{n+m})
\]
for any two cochains $c \in C_{\b}(G^{n+1},\R)$ and $e \in C_{\b}(G^{m+1},\R)$.

As was already mentioned in the introduction, Burger and Monod \cite[Thm.\,2.30]{BurgerIozzi/A-useful-formula-from-bounded-cohomology} proved that the comparison map is an isomorphism $H^{2}_{\cb}(G;\R) \cong H^{2}_{\c}(G;\R)$ in degree $2$. Since in our case the Lie group $G$ is Hermitian, this amounts to an isomorphism $H^{2}_{\cb}(G;\R) \cong \R$ with an explicit generator given by the \emph{bounded K\"ahler class} $\kappa \in H^{2}_{\cb}(G;\R)$. The bounded K\"ahler class is determined by a certain geometric bounded cocycle known as the \emph{Dupont cocycle} \cite[Sec.\,2.3]{BurgerIozzi/A-useful-formula-from-bounded-cohomology}.

For more background on the continuous bounded cohomology of locally compact groups we refer the reader to \cite{Monod/Continuous-bounded-cohomology-of-locally-compact-groups, Burger/Continuous-bounded-cohomology-and-applications-to-rigidity-theory, BurgerIozzi/A-useful-formula-from-bounded-cohomology}.

\subsection{The boundary model}
\label{subsec:BoundaryModel}

The approach taken in this article relies on the boundary model for the continuous bounded cohomology of $G$ due to Ivanov \cite{Ivanov/Foundations-of-the-theory-of-bounded-cohomology} and Burger and Monod \cite{Burger/Bounded-cohomology-of-lattices-in-higher-rank-Lie-groups}. Let us first consider the cochain complex
\begin{equation} \label{map:MeasurableCochainComplex}
	\begin{tikzcd}[cramped]
		0 \arrow[r,rightarrow]
		&\Ls^{0}(\T^{1},\K) \arrow[r,"\de^{0}",rightarrow]
		& \Ls^{0}(\T^{2},\K) \arrow[r,"\de^{1}",rightarrow]
		& \Ls^{0}(\T^{3},\K) \arrow[r,"\de^{2}",rightarrow]
		& \cdots
	\end{tikzcd}
\end{equation}
of complex measurable functions on the Furstenberg boundary of $G$, where
\[
\map{\de^{n}}{\Ls^{0}(\T^{n+1},\K)}{\Ls^{0}(\T^{n+2},\K)} \quad\quad (n\ge0)
\]
denotes the homogeneous coboundary operator acting by
\begin{equation} \label{eqn:CoboundaryOperator}
	(\de^{n} f)(z_{0},\ldots,z_{n+1}) \deq \sum_{j=0}^{n+1} (-1)^{j} \cdot f(z_{0},\ldots,\widehat{z_{j}},\ldots,z_{n+1}).
\end{equation}
It follows from the definitions that this coboundary operator induces coboundary operators, all denoted by the same symbol $\de^{\bullet}$, acting on the function space $L^{\infty}(\T^{\bullet+1},\R)^{G}$, as well as on the function spaces $A(\T^{\bullet+1},\R)$, $A^{\infty}(\T^{\bullet+1},\R)$, $A(\T^{\bullet+1},\R)^{P}$, $A^{\infty}(\T^{\bullet+1},\R)^{P}$, $A^{\infty}(\T^{\bullet+1},\C_{1})$ and $A_{\t}^{\infty}(\T^{\bullet+1},\C_{1})$ which we are going to work with. In this manner, we obtain corresponding cochain complexes that are subcomplexes of quotients of subcomplexes of \eqref{map:MeasurableCochainComplex}.

Of particular interest in this section is the first of these induced cochain complexes, which is the complex
\[
\begin{tikzcd}[cramped]
	0 \arrow[r,rightarrow]
	& L^{\infty}(\T^{1},\R)^{G} \arrow[r,"\de^{0}",rightarrow]
	& L^{\infty}(\T^{2},\R)^{G} \arrow[r,"\de^{1}",rightarrow]
	& L^{\infty}(\T^{3},\R)^{G} \arrow[r,"\de^{2}",rightarrow]
	& \cdots
\end{tikzcd}
\]
of $G$-invariant bounded measurable functions on the Furstenberg boundary of $G$. The boundary model realizes the continuous bounded cohomology of $G$ in terms of this cochain complex. More specifically, since the boundary action of $G$ is amenable, by \cite[Thm.\,7.5.3]{Monod/Continuous-bounded-cohomology-of-locally-compact-groups} there is an isomorphism
\begin{equation} \label{map:BoundaryModel}
	H^{n}_{\cb}(G;\R) \cong H^{n}\left( L^{\infty}(\T^{\bullet+1},\R)^{G},\de^{\bullet} \right)
\end{equation}
in every degree $n \ge 0$. Notice that this collection of isomorphisms is compatible with the cup product introduced in Section \ref{subsec:ContinuousBoundedCohomology} and hence gives rise to an isomorphism of the respective bounded cohomology rings.

\subsection{Equivariant measurable liftings}
\label{subsec:EquivariantMeasurableLiftings}

First of all, we observe that the canonical map
\begin{equation} \label{map:EquivariantLifting}
	\Ls^{\infty}(\T^{n+1},\R)^{G} \to L^{\infty}(\T^{n+1},\R)^{G} \quad\quad (n \ge 0)
\end{equation}
considered in Section \ref{subsec:BoundedMeasurableFunctions} is in fact a cochain map that intertwines with the action of the coboundary operator $\de$. As Monod \cite{Monod/Equivariant-measurable-liftings} points out, there is a priori no reason for this map to be surjective. However, since the boundary action of $G$ is amenable, by a result of Monod \cite[Thm.\,A, Rem.\,1 and Cor.\,6]{Monod/Equivariant-measurable-liftings} the cochain map in \eqref{map:EquivariantLifting} does in fact admit a section that intertwines with $\de$. An immediate consequence of this is the following proposition, which paves the way for applying differential geometric methods in the study of the boundary model for the continuous bounded cohomology of $G$.

\begin{proposition}
\label{prop:EquivariantLifting}
	There is a surjective homomorphism
\begin{equation} \label{map:EquivariantLiftingOnCohomology}
	H^{n}\left( A^{\infty}(\T^{\bullet+1},\R)^{P},\de^{\bullet} \right) \to H^{n}\left( L^{\infty}(\T^{\bullet+1},\R)^{G},\de^{\bullet} \right)
\end{equation}
for every $n \ge 0$.
\end{proposition}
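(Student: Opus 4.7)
The plan is to construct an explicit right inverse at the cochain level for the map in \eqref{map:EquivariantLiftingOnCohomology}, relying on Monod's equivariant measurable lifting recalled in the preamble.

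First, I would set up the natural cochain map $\iota^\bullet : A^\infty(\T^{\bullet+1},\R)^P \to L^\infty(\T^{\bullet+1},\R)^G$ going downward. Given a class in $A^\infty(\T^{n+1},\R)^P$, choose any representative $f \in \As^\infty(\T^{n+1},\R)$; by the $K$-invariance built into $\As = \Ss^{\b}_G(\T^{n+1},\R)^K$ together with the $P$-invariance of $f$ on $\T^{(n+1)}$ and the Iwasawa decomposition $G = KP$, the function $f$ is pointwise $G$-invariant on the full-measure subset $\T^{(n+1)}$, so its a.e.\ class in $L^\infty(\T^{n+1},\R)^G$ is well defined and independent of the choice of representative. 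This cochain map manifestly commutes with $\de^\bullet$ and induces the map \eqref{map:EquivariantLiftingOnCohomology} on cohomology.

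Next, I would observe that there is a natural cochain map $\pi^\bullet : \Ls^\infty(\T^{\bullet+1},\R)^G \to A^\infty(\T^{\bullet+1},\R)^P$ going the other way. The key point is that any honest pointwise $G$-invariant bounded measurable function $\tilde c \in \Ls^\infty(\T^{n+1},\R)^G$ automatically lies in $\As^\infty(\T^{n+1},\R)$: for every $\z \in \T^{n+1}$ the orbit map $g \mapsto \tilde c(g.\z)$ is constant, hence smooth, so $\tilde c$ is $G$-orbitwise smooth; all directional derivatives $\L_{i_1,\ldots,i_\ell} \tilde c$ vanish identically and are therefore bounded; and $K$-invariance is contained in $G$-invariance. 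Since $\tilde c$ is moreover pointwise $P$-invariant, passage to the quotient $A^\infty$ lands in the $P$-invariant part, and the resulting map clearly intertwines with $\de^\bullet$.

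Finally, by Monod's theorem there is a section $\sigma^\bullet : L^\infty(\T^{\bullet+1},\R)^G \to \Ls^\infty(\T^{\bullet+1},\R)^G$ of the canonical projection \eqref{map:EquivariantLifting} that commutes with $\de^\bullet$. Setting $\tau^\bullet \deq \pi^\bullet \circ \sigma^\bullet$, one checks immediately that $\iota^\bullet \circ \tau^\bullet$ agrees on cochains with the canonical projection composed with its section, that is, with the identity on $L^\infty(\T^{\bullet+1},\R)^G$. Passing to cohomology gives $H^n(\iota) \circ H^n(\tau) = \mathrm{id}$, from which surjectivity of \eqref{map:EquivariantLiftingOnCohomology} follows. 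The only nontrivial ingredient is Monod's existence result for the $\de$-commuting measurable section, which has already been invoked in the preamble; all remaining steps are routine unravelings of definitions, the main conceptual point being that the regularity conditions packaged into $\As^\infty$ are automatically satisfied by an honest $G$-invariant bounded measurable function, simply because such a function is constant along $G$-orbits.
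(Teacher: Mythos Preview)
Your proposal is correct and follows essentially the same approach as the paper: you construct the forward cochain map via $G$-invariance on the full-measure configuration space, then obtain a section by composing Monod's $\de$-commuting lift $L^{\infty}\to\Ls^{\infty}$ with the observation that a pointwise $G$-invariant bounded measurable function is constant along orbits and hence automatically lies in $\As^{\infty}(\T^{n+1},\R)$. The paper's proof is the same argument, only organized a bit more tersely.
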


\begin{proof}
	Recall from Section \ref{subsec:KEquivariantFunctions} that every function $f \in A^{\infty}(\T^{n+1},\R)^{P}$ has a representative $f \in \As^{\infty}(\T^{n+1},\R)$ that is bounded and $G$-invariant on the configuration space $\T^{(n+1)}$. Since the subspace $\T^{(n+1)}$ is of full measure in $\T^{n+1}$, this function $f$ therefore defines an element of $L^{\infty}(\T^{n+1},\R)^{G}$. Hence we obtain a natural cochain map
\begin{equation} \label{map:EquivariantLiftingFromA^P}
	A^{\infty}(\T^{n+1},\R)^{P} \to L^{\infty}(\T^{n+1},\R)^{G} \quad\quad (n \ge 0)
\end{equation}
that intertwines with $\de$. This map admits a section that is induced by the section of the cochain map in \eqref{map:EquivariantLifting} \cite[Thm.\,A, Rem.\,1 and Cor.\,6]{Monod/Equivariant-measurable-liftings}. To see this, we note that every $G$-invariant bounded measurable function $f \in \Ls^{\infty}(\T^{n+1},\R)^{G}$ is constant along all $G$-orbits in $\T^{n+1}$, hence it is in particular $G$-orbitwise smooth and bounded with bounded $G$-derivatives. Thus it determines a function in the space $A^{\infty}(\T^{n+1},\R)^{P}$.
\end{proof}

\subsection{Cochain contractions}
\label{subsec:CochainContractions}

For every $\m \in \Z$, we define a linear integral operator
\[
\map{\I^{n}}{A^{\infty}(\T^{n+1},\C_{\m})}{A^{\infty}(\T^{n},\C_{\m})} \quad\quad (n \ge 1)
\]
by
\begin{equation} \label{eqn:DefinitionOfOperatorI}
	(\I^{n} f)(z_{0},\ldots,z_{n-1}) \deq \int_{S^{1}} f(z,z_{0},\ldots,z_{n-1}) \, \d\mu_{K}(z).
\end{equation}
We are now going to prove that this operator is well-defined and gives rise to a cochain contraction for the complex $(A^{\infty}(\T^{\bullet+1},\C_{\m}),\de^{\bullet})$. Recall that this means that the operator $\I$ satisfies the identity
\begin{equation} \label{eqn:CochainContraction}
	\I^{n+1} \circ \de^{n} + \de^{n-1} \circ \I^{n} = \Id
\end{equation}
for all $n>0$.

\begin{proposition} \label{prop:ComplexAIsAcyclic}
	For every $\m \in \Z$, the operator $\I$ is a well-defined cochain contraction for the complex $(A^{\infty}(\T^{\bullet+1},\C_{\m}),\de^{\bullet})$.
\end{proposition}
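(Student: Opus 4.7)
The proof naturally splits into two parts: the cochain contraction identity \eqref{eqn:CochainContraction}, and the verification that $\I^n$ is well-defined as a map from $A^{\infty}(\T^{n+1},\C_{\m})$ into $A^{\infty}(\T^{n},\C_{\m})$. The identity is a standard simplicial cone computation that I would carry out directly by expanding $(\de^{n} f)(z, z_{0}, \ldots, z_{n})$ via \eqref{eqn:CoboundaryOperator}: the $j=0$ term produces $f(z_{0}, \ldots, z_{n})$, while the remaining $n+1$ terms, after integration over $z$ against $\mu_{K}$, assemble into $-(\de^{n-1} \I^{n} f)(z_{0}, \ldots, z_{n})$. Combining these yields $\I^{n+1} \de^{n} f = f - \de^{n-1} \I^{n} f$ pointwise on $\T^{(n+1)}$, which is \eqref{eqn:CochainContraction}.

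For well-definedness, I would work with a representative $f \in \Ss^{\b}_{G}(\T^{n+1},\C_{\m})^{K}$ and verify four properties of $\I^{n} f$ in turn. First, if two representatives agree on $\T^{(n+1)}$, then for each $(z_{0},\ldots,z_{n-1}) \in \T^{(n)}$ they differ at most on the finite (hence $\mu_{K}$-null) set $\{z_{0},\ldots,z_{n-1}\} \subset S^{1}$, so their integrals coincide on $\T^{(n)}$; this handles the quotient. Second, $\|\I^{n} f\|_{\infty} \le \|f\|_{\infty}$ since $\mu_{K}$ is a probability measure. Third, $K$-equivariance of $\I^{n} f$ with weight $\m$ follows from the substitution $z = k_{t}.w$ combined with the $K$-invariance of $\mu_{K}$ and the $K$-equivariance of $f$. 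Fourth, for $G$-orbitwise smoothness, the change of variables $z = g.w$ produces the representation $(\I^{n} f)(g.\z) = \int_{S^{1}} f(g.(w, z_{0}, \ldots, z_{n-1})) J(g, w) \, \d\mu_{K}(w)$ with $J$ jointly smooth in $g$ and $w$; together with $G$-orbitwise smoothness of $f$, dominated convergence then justifies differentiation under the integral on neighborhoods of each $g_{0}$, yielding smoothness of $g \mapsto (\I^{n} f)(g.\z)$.

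The crux of the argument is boundedness of every $G$-derivative $\L_{i_{1},\ldots,i_{\ell}} \I^{n} f$ uniformly on $\T^{n}$; this cannot be read off from the change-of-variables formula because the Radon--Nikodym factor $J(g, \cdot)$ is unbounded as $g$ escapes any compactum in $G$. Instead, I would exploit integration by parts on $S^{1}$. Decompose the $(n+1)$-variable fundamental vector field as $\L_{X}^{(n+1)} = X_{z} + \L_{X}^{(n)}$ for $X \in \{K, A, N\}$, where $X_{z}$ acts only on the integration variable $z = e^{\i \theta}$. Using \eqref{eqn:FundamentalVectorFields} and $2\pi$-periodicity, one verifies the three boundary-term-free identities $\int \partial_{\theta} f \, \d\mu_{K} = 0$, $\int \sin(\theta) \partial_{\theta} f \, \d\mu_{K} = -\int \cos(\theta) f \, \d\mu_{K}$, and $\int (1-\cos(\theta)) \partial_{\theta} f \, \d\mu_{K} = -\int \sin(\theta) f \, \d\mu_{K}$. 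Combining these with the decomposition yields a commutation identity $\L_{X} \I^{n} f = \I^{n}(\L_{X}^{(n+1)} f) + B_{X} f$, where $B_{X}$ is either zero (for $X = K$) or a bounded integral operator against $\pm\cos(\theta)$ or $\pm\sin(\theta)$ (for $X = A$ or $N$). Since each $B_{X}$ manifestly preserves $\Ss^{\b}_{G}$, iterating this identity expresses any higher-order $\L_{i_{1},\ldots,i_{\ell}} \I^{n} f$ as a finite sum of $\I^{n}$ and $B_{X}$-type operators applied to $G$-derivatives of $f$ of order at most $\ell$. As $f \in \Ss^{\b}_{G}$ has all $G$-derivatives bounded by hypothesis, this proves boundedness of every $G$-derivative of $\I^{n} f$, completing the last property and the proof.
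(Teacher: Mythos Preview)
Your proof closely parallels the paper's: both establish the same commutation formulas (the paper's (3.6)--(3.7), your $\L_{X}\I^{n}f = \I^{n}(\L_{X}^{(n+1)}f) + B_{X}f$), and then iterate them to bound higher derivatives. The cochain-contraction identity, the quotient-independence, boundedness, and $K$-equivariance arguments are all fine and match the paper.

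There is, however, a subtle gap in your derivation of the commutation formula via integration by parts. The identities $\int \sin(\theta)\,\partial_{\theta}f\,\d\mu_{K} = -\int \cos(\theta)\,f\,\d\mu_{K}$ etc., and likewise the splitting $\L_{X}^{(n+1)}f = X_{z}f + \L_{X}^{(n)}f$, presuppose that $f$ is differentiable in the integration variable $\theta$ alone (with the other coordinates held fixed). But $G$-orbitwise smoothness only asserts smoothness of $g \mapsto f(g.\z)$ along the \emph{diagonal} $G$-action; a function in $\Ss^{\b}_{G}$ may be quite irregular along single-coordinate slices, so neither $X_{z}f$ nor $\L_{X}^{(n)}f$ need exist separately, and integration by parts in $\theta$ is not justified.

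The paper obtains the same formula by exactly the change of variables $z = g.w$ that you already use for orbitwise smoothness: after that substitution the integrand becomes $J(g,w)\cdot f(g.(w,\z))$, which \emph{is} smooth in $g$ by $G$-orbitwise smoothness. Differentiating under the integral (dominated convergence, using that $f\in\Ss^{\b}_{G}$) and evaluating at $g=e$ with the product rule gives
\[
(\L_{X}\I^{n}f)(\z) = \I^{n}(\L_{X}^{(n+1)}f)(\z) + \int_{S^{1}} f(w,\z)\,\left.\partial_{g}J(g,w)\right|_{g=e,\,X}\,\d\mu_{K}(w),
\]
and the derivative of the Jacobian is precisely the divergence of the one-variable vector field ($0$, $\cos\theta$, $\sin\theta$ for $K$, $A$, $N$), recovering your $B_{X}$. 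So your formula is correct; you only need to derive it from the change of variables you already have in hand rather than from integration by parts. With that fix your iteration argument goes through (noting that $B_{X}$ preserves $\Ss^{\b}_{G}$ by the same change-of-variables computation, so ``manifestly'' is a slight overstatement).
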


\begin{proof}
	Fix $\m \in \Z$. First of all, we check that the operator $\I$ is well-defined. Consider a function $f \in \As^{\infty}(\T^{n+1},\C_{\m})$ representing a class in $A^{\infty}(\T^{n+1},\C_{\m})$.

Since $f$ is bounded, the integral in \eqref{eqn:DefinitionOfOperatorI} exists for every point $(z_{0},\ldots,z_{n-1}) \in \T^{n}$ and defines a bounded measurable function $\I f$ on $\T^{n}$. Then $K$-invariance of the measure $\mu_{K}$ together with $K$-equivariance of $f$ imply that the function $\I f$ is $K$-equivariant.

Next we prove that $\I f$ is $G$-orbitwise smooth and has bounded derivatives. Let us begin by considering the first order derivatives of $\I f$. Since $\I f$ is $K$-equivariant, it follows from Lemma~\ref{lemma:InvarianceAndEquivarianceUnderK} that the derivative $\L_{K} \I f$ exists and is bounded on $\T^{n}$. We now inspect the derivatives $\L_{A} \I f$ and $\L_{N} \I f$. This requires some computations, which are best carried out in angular coordinates $(\th_{0},\ldots,\th_{n-1}) \in \T^{n}$. Formally, we have
\begin{equation} \label{eqn:ADerivativeOfIf1}
\begin{aligned}
	(\L_{A} \I f)(\th_{0},\ldots,\th_{n-1}) &= \left. \dd{}{t}\right|_{t=0} (\I f)(a_{t}.\th_{0},\ldots,a_{t}.\th_{n-1}) \\
	&= \frac{1}{2\pi} \left. \dd{}{t}\right|_{t=0} \, \int_{0}^{2\pi} f(\eta,a_{t}.\th_{0},\ldots,a_{t}.\th_{n-1}) \, \d\eta \\
	&= \frac{1}{2\pi} \int_{0}^{2\pi} \, \left. \dd{}{t}\right|_{t=0} \left( \dd{(a_{t}.\eta)}{\eta} \cdot f(a_{t}.\eta,a_{t}.\th_{0},\ldots,a_{t}.\th_{n-1}) \right) \d\eta.
\end{aligned}
\end{equation}
Since $f$ is $G$-orbitwise smooth, the derivative appearing under the integral sign is given by
\begin{equation} \label{eqn:ADerivativeOfIf2}
\begin{aligned}
	& \left. \dd{}{t}\right|_{t=0} \left( \dd{(a_{t}.\eta)}{\eta} \cdot f(a_{t}.\eta,a_{t}.\th_{0},\ldots,a_{t}.\th_{n-1}) \right) \\
	&= \left. \dd{}{t}\right|_{t=0} f(a_{t}.\eta,a_{t}.\th_{0},\ldots,a_{t}.\th_{n-1}) + \left. \dd{}{t}\right|_{t=0} \dd{(a_{t}.\eta)}{\eta} \cdot f(\eta,\th_{0},\ldots,\th_{n-1}) \\
	&= (\L_{A} f)(\eta,\th_{0},\ldots,\th_{n-1}) + \cos(\eta) \cdot f(\eta,\th_{0},\ldots,\th_{n-1}),
\end{aligned}
\end{equation}
where in the last step we used the identity
\[
\left. \dd{}{t}\right|_{t=0} \dd{(a_{t}.\eta)}{\eta} = \dd{}{\eta} \left. \dd{(a_{t}.\eta)}{t}\right|_{t=0} = \dd{}{\eta} \sin(\eta) = \cos(\eta)
\]
which follows from \eqref{eqn:FundamentalVectorFields}. Since the function $f$ is bounded with bounded derivatives, we see that the derivative in \eqref{eqn:ADerivativeOfIf2} is bounded on $\T^{n}$. Hence the Lebesgue dominated convergence theorem justifies the computation in \eqref{eqn:ADerivativeOfIf1} and therefore the derivative $\L_{A} \I f$ exists. Further, combining \eqref{eqn:ADerivativeOfIf1} and \eqref{eqn:ADerivativeOfIf2} we obtain the formula
\begin{multline} \label{eqn:FormulaL_AI}
	(\L_{A} \I f)(\th_{0},\ldots,\th_{n-1}) = (\I \L_{A} f)(\th_{0},\ldots,\th_{n-1}) \\ + \frac{1}{2\pi} \, \int_{0}^{2\pi} \cos(\eta) \cdot f(\eta,\th_{0},\ldots,\th_{n-1}) \, \d\eta.
\end{multline}
Likewise we have
\begin{multline} \label{eqn:FormulaL_NI}
	(\L_{N} \I f)(\th_{0},\ldots,\th_{n-1}) = (\I \L_{N} f)(\th_{0},\ldots,\th_{n-1}) \\ + \frac{1}{2\pi} \, \int_{0}^{2\pi} \sin(\eta) \cdot f(\eta,\th_{0},\ldots,\th_{n-1}) \, \d\eta.
\end{multline}
Since $f$ is bounded with bounded derivatives it follows that the derivatives $\L_{A} \I f$ and $\L_{N} \I f$ are bounded functions on $\T^{n}$. In the general case, a similar argument shows that the directional derivatives $\L_{i_{1},\ldots,i_{\ell}} \I f$ are bounded functions on $\T^{n}$ for all $\ell > 0$ and all $(i_{1},\ldots,i_{\ell}) \in \{K,A,N\}^{\ell}$. Since $G = KAN$ by the Iwasawa decomposition, this proves that $\I f$ is $G$-orbitwise smooth and bounded with bounded derivatives.

We see from \eqref{eqn:DefinitionOfOperatorI} that the restriction of $\I f$ to the configuration space $\T^{(n)}$ does not depend on the choice of $f$ since $f$ is uniquely determined on the configuration space $\T^{(n+1)}$ and $(z,z_{0},\ldots,z_{n-1}) \in \T^{(n+1)}$ for almost every $z \in S^{1}$. Hence the function $\I f$ defines a class in $A^{\infty}(\T^{n},\C_{\m})$.

A straightforward calculation shows that $\I^{n+1} \circ \de^{n} f + \de^{n-1} \circ \I^{n} f = f$ holds pointwise on the configuration space $\T^{(n+1)}$ for every function $f \in \As(\T^{n+1},\C_{\m})$ and for all $n \ge 0$.
\end{proof}

An immediate consequence of the proposition is the following vanishing theorem for the cohomology of the complex $(A^{\infty}(\T^{\bullet+1},\C_{\m}),\de^{\bullet})$.

\begin{corollary} \label{cor:VanishingForA}
	For every $\m \in \Z$, the cochain complex $(A^{\infty}(\T^{\bullet+1},\C_{\m}),\de^{\bullet})$ is acyclic and hence $H^{n}(A^{\infty}(\T^{\bullet+1},\C_{\m}),\de^{\bullet}) = 0$ for all $n>0$.
\end{corollary}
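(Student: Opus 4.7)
The plan is to deduce this corollary directly from the cochain contraction identity \eqref{eqn:CochainContraction} established in Proposition \ref{prop:ComplexAIsAcyclic}. Fix $\m \in \Z$, let $n > 0$, and take any cocycle $c \in A^{\infty}(\T^{n+1},\C_{\m})$ with $\de^{n} c = 0$. Applying the identity $\I^{n+1} \circ \de^{n} + \de^{n-1} \circ \I^{n} = \Id$ to $c$ yields
\[
c = \I^{n+1}(\de^{n} c) + \de^{n-1}(\I^{n} c) = \de^{n-1}(\I^{n} c),
\]
which exhibits $c$ as the coboundary of the element $\I^{n} c \in A^{\infty}(\T^{n},\C_{\m})$. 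Hence $[c] = 0$ in $H^{n}(A^{\infty}(\T^{\bullet+1},\C_{\m}),\de^{\bullet})$, and so the cohomology vanishes in every positive degree.

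There is no substantial obstacle here: all of the analytic work — namely, verifying that $\I^{n} c$ lies in $A^{\infty}(\T^{n},\C_{\m})$ (bounded, $K$-equivariant, $G$-orbitwise smooth, with bounded $G$-derivatives, and well-defined modulo the complement of the configuration space) and that the contraction identity holds pointwise on $\T^{(n+1)}$ — has already been carried out in Proposition \ref{prop:ComplexAIsAcyclic}. What remains is the standard homological-algebra observation that the existence of a cochain contraction forces acyclicity, which requires no further computation.
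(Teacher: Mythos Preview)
Your proof is correct and matches the paper's approach exactly: the paper states the corollary as ``an immediate consequence'' of Proposition~\ref{prop:ComplexAIsAcyclic} without giving any further argument, and what you have written is precisely the standard unwinding of that immediate consequence.
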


\section{The Cauchy-Frobenius complex}
\label{sec:CauchyFrobeniusComplex}

\subsection{The differential operators $\L$ and $\Q$}
\label{subsec:OperatorsLAndQ}

We introduce two basic first order linear partial differential operators acting on $P$-orbitwise smooth functions. The first operator is defined by combining the real operators $\L_{A}$ and $\L_{N}$, introduced in Section \ref{subsec:OrbitwiseSmoothFunctions}, into a single complex operator.

\begin{definition} \label{def:OperatorL}
	The \emph{Cauchy operator} is the complex operator
\begin{equation} \label{map:OperatorLUpstairs}
	\map{\L \deq \L_{A} + \,\i \L_{N}}{\Ss_{P}(\T^{n+1},\R)}{\Ss_{P}(\T^{n+1},\C)}.
\end{equation}
\end{definition}

The Cauchy operator naturally acts on $P$-orbitwise smooth functions. Its complex conjugate will be denoted by $\Lbar \deq \L_{A} - \,\i \L_{N}$. For later reference, we note that as an immediate consequence of the real commutator relations in \eqref{eqn:CommutatorRelationsKAN}, the operators $\L_{K}$, $\L$ and $\Lbar$ satisfy the complex commutator relations
\begin{equation} \label{eqn:CommutatorRelationsL}
	\left[ \L_{K},\L \right] - \L_{K} - \,\i\L = 0, \quad \left[ \L_{K},\Lbar \right] - \L_{K} + \,\i\Lbar = 0, \quad \left[ \L,\Lbar \right] + \L - \,\Lbar = 0.
\end{equation}
The second operator is defined in terms of the conjugated Cauchy operator $\Lbar$.

\begin{definition}  \label{def:OperatorQ}
	The \emph{Frobenius operator} is the real operator
\begin{equation} \label{map:OperatorQUpstairs}
	\map{\Q \deq \Im\bigl( \Id - \Lbar \bigr)}{\Ss_{P}(\T^{n+1},\C)}{\Ss_{P}(\T^{n+1},\R)}
\end{equation}
defined as the imaginary part of the operator $\Id - \Lbar$.
\end{definition}

We reserve the notation $u = \usharp + \i \uflat$ for the decomposition of a complex function $u$ into its real and imaginary parts. For later reference, we note that the action of the Frobenius operator on some function $u \in \Ss_{P}(\T^{n+1},\C)$ then takes the form
\begin{equation} \label{eqn:RealVersionActionOfQ}
	\Q u = \uflat - \L_{A} \uflat + \L_{N} \usharp.
\end{equation}

\subsection{The Cauchy-Frobenius complex}

The goal of this section is to investigate the interaction between the differential operators $\L$ and $\Q$. We denote by
\[
\map{\iota^{n}}{A^{\infty}(\T^{n+1},\R)^{P}}{A^{\infty}(\T^{n+1},\R)} \quad\quad (n \ge 0)
\]
the canonical inclusion. We begin with the following basic observation.

\begin{proposition} \label{prop:CauchyFrobeniusSequence}
	The differential operators $\L$ and $\Q$ in \eqref{map:OperatorLUpstairs} and \eqref{map:OperatorQUpstairs} induce linear operators
\begin{equation} \label{map:OperatorLDownstairs}
	\map{\L^{n}}{A(\T^{n+1},\R)}{A^{\infty}(\T^{n+1},\C_{1})} \quad\quad (n \ge 0)
\end{equation}
and
\begin{equation} \label{map:OperatorQDownstairs}
	\map{\Q^{n}}{A^{\infty}(\T^{n+1},\C_{1})}{A^{\infty}(\T^{n+1},\R)} \quad\quad (n \ge 0)
\end{equation}
which give rise to a differential complex
\begin{equation} \label{map:CauchyFrobeniusComplex}
	\begin{tikzcd}[column sep = small]
		0 \arrow[r,rightarrow] & A^{\infty}(\T^{n+1},\R)^{P} \arrow[r,"\iota^{n}",rightarrow]
		& A^{\infty}(\T^{n+1},\R) \arrow[r,"\L^{n}",rightarrow]
		& A^{\infty}_{\t}(\T^{n+1},\C_{1}) \arrow[r,"\Q^{n}",rightarrow]
		& A^{\infty}(\T^{n+1},\R) \arrow[r,rightarrow] & 0
	\end{tikzcd}
\end{equation}
for every $n \ge 0$.
\end{proposition}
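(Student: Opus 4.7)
The plan is to verify in turn that (a) each operator is well-defined on the quotient spaces with the claimed codomains; (b) the two compositions $\L^{n} \iota^{n}$ and $\Q^{n} \L^{n}$ vanish; and (c) the image of $\L^{n}$ actually lies in the tame subspace $A^{\infty}_{\t}(\T^{n+1},\C_{1})$ when restricted to $A^{\infty}(\T^{n+1},\R)$. Throughout, I would work with representatives $f \in \As(\T^{n+1},\R)$ and $u \in \As^{\infty}(\T^{n+1},\C_{1})$ and invoke Lemma \ref{lemma:InvarianceAndEquivarianceUnderK} to upgrade infinitesimal $K$-transformation laws to global ones.

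First I would check the well-definedness of $\L^{n}$. Boundedness and $G$-orbitwise smoothness of $\L f = \L_{A} f + \i \L_{N} f$, together with boundedness of all of its higher $G$-derivatives, are inherited directly from the assumption $f \in \Ss^{\b}_{G}$, since $\L$ simply raises the differentiation order by one. For weight-$1$ $K$-equivariance, I note that $K$-invariance of $f$ gives $\L_{K} f = 0$, and hence the commutator relations \eqref{eqn:CommutatorRelationsKAN} yield
\begin{equation*}
\L_{K}\L_{A} f = [\L_{K},\L_{A}] f = -\L_{N} f, \qquad \L_{K}\L_{N} f = [\L_{K},\L_{N}] f = \L_{A} f,
\end{equation*}
so $\L_{K}(\L f) = \i \L f$, and Lemma \ref{lemma:InvarianceAndEquivarianceUnderK} forces the weight-$1$ equivariance. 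Well-definedness modulo the complement of $\T^{(n+1)}$ follows because $A$ and $N$ preserve $\T^{(n+1)}$, so $\L f|_{\T^{(n+1)}}$ depends only on $f|_{\T^{(n+1)}}$. An entirely analogous calculation, using formula \eqref{eqn:RealVersionActionOfQ} for $\Q u = u^{\flat} - \L_{A} u^{\flat} + \L_{N} u^{\sharp}$ together with the identities $\L_{K} u^{\sharp} = -u^{\flat}$ and $\L_{K} u^{\flat} = u^{\sharp}$ forced by weight-$1$ equivariance of $u$, gives $\L_{K} \Q u = 0$ and establishes the well-definedness of $\Q^{n}$.

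For the complex relations, $\L^{n} \circ \iota^{n} = 0$ follows at once from \eqref{eqn:GInvarianceInA}: a representative of a class in $A^{\infty}(\T^{n+1},\R)^{P}$ is $P$-invariant on $\T^{(n+1)}$, so $\L_{A} f = \L_{N} f = 0$ there. For $\Q^{n} \circ \L^{n} = 0$, setting $u = \L f$ gives $u^{\sharp} = \L_{A} f$ and $u^{\flat} = \L_{N} f$, whence
\begin{equation*}
\Q \L f = \L_{N} f - \L_{A}\L_{N} f + \L_{N}\L_{A} f = \L_{N} f - [\L_{A},\L_{N}] f = 0
\end{equation*}
by the third commutator in \eqref{eqn:CommutatorRelationsKAN}. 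Finally, for tameness of $\L f$ when $f$ represents a class in $A^{\infty}(\T^{n+1},\R)$, boundedness $\|f\|_{\infty} \le M$ combined with the integration identity \eqref{eqn:IntegrateAlongA} yields
\begin{equation*}
\biggl| \int_{0}^{T} (\L_{A} f)(a_{t}.\z) \, \d t \biggr| = |f(a_{T}.\z) - f(\z)| \le 2M
\end{equation*}
uniformly in $T$ and $\z$, so $\L f \in A^{\infty}_{\t}(\T^{n+1},\C_{1})$. The main obstacle is essentially bookkeeping: keeping straight which quotient identification is in play for each space (pointwise on $\T^{(n+1)}$ for the $A$-spaces versus almost-everywhere on $\T^{n+1}$ for the $L^{\infty}$-spaces) and confirming that differential operators whose flows preserve $\T^{(n+1)}$ descend consistently; the underlying commutator computations and the integration identity are short.
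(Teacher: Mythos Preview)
Your proposal is correct and follows essentially the same approach as the paper's proof: both verify well-definedness of $\L^{n}$ and $\Q^{n}$ via the commutator relations and Lemma~\ref{lemma:InvarianceAndEquivarianceUnderK}, establish $\L\iota = 0$ from $P$-invariance on $\T^{(n+1)}$ and $\Q\L = 0$ from $[\L_{A},\L_{N}] = \L_{N}$, and deduce tameness from the integration identity \eqref{eqn:IntegrateAlongA}. The only cosmetic difference is that the paper packages the $K$-equivariance checks using the complex commutator relations \eqref{eqn:CommutatorRelationsL} rather than the real ones \eqref{eqn:CommutatorRelationsKAN} you use, which amounts to the same computation.
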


The differential complex in \eqref{map:CauchyFrobeniusComplex} will be called the \emph{Cauchy-Frobenius complex}.

\begin{proof}
	\noindent{\textbf{Step 1.}} We prove that the Cauchy operator in \eqref{map:OperatorLUpstairs} induces a linear operator
\[
\map{\L^{n}}{A(\T^{n+1},\R)}{A^{\infty}(\T^{n+1},\C_{1})}.
\]

\medskip

Consider a function $p \in A(\T^{n+1},\R)$ that is represented by some function $p \in \As(\T^{n+1},\R)$. The function $\L p$ is bounded with bounded derivatives since $p$ has bounded derivatives. By Lemma \ref{lemma:InvarianceAndEquivarianceUnderK} we have $\L_{K} p = 0$. Hence it follows with the commutator relations from \eqref{eqn:CommutatorRelationsL} that
\[
\L_{K} (\L p) - \i \, \L p = \left[ \L_{K},\L \right] p - \L_{K} p - \,\i\L p = 0,
\]
which by Lemma \ref{lemma:InvarianceAndEquivarianceUnderK} implies that $\L p$ is $K$-equivariant as a function taking values in the $K$-module $\C_{1}$. Thus the function $\L p$ determines a well-defined class in $A^{\infty}(\T^{n+1},\C_{1})$ since the configuration space $\T^{(n+1)}$ is invariant under the action of $P$.

\medskip

\noindent{\textbf{Step 2.}} We prove that the operator $\L^{n}$ from Step 1 restricts to an operator
\[
\map{\L^{n}}{A^{\infty}(\T^{n+1},\R)}{A^{\infty}_{\t}(\T^{n+1},\C_{1})}.
\]

\medskip

Continuing with the argument from Step 1, it remains to check that $\L p$ is tame. To this end, we observe that $\Re(\L p) = \L_{A} p$. Hence by \eqref{eqn:IntegrateAlongA} we obtain for $\z \in \T^{(n+1)}$ the identity
\[
\int_{0}^{T} (\Re(\L p))(a_{t}.\z) \, \d t = p(a_{T}.\z) - p(\z)
\]
for all $T \in \R$. Since $p$ is bounded we conclude that $\L p$ is tame.

\medskip

\noindent{\textbf{Step 3.}} We prove that $\im \, \iota^{n} \subset \ker \L^{n}$.

\medskip

Consider a function $p \in A^{\infty}(\T^{n+1},\R)^{P}$. It is $G$-orbitwise smooth and $P$-invariant when restricted to the configuration space $\T^{(n+1)}$, hence invariant under the actions of $A$ and $N$ thereon. Thus $\L_{A} p = 0 = \L_{N} p$ on $\T^{(n+1)}$, which implies that $\L \iota p = 0$ in $A(\T^{n+1},\C_{1})$ because $p$ is real valued.

\medskip

\noindent{\textbf{Step 4.}} We prove that the Frobenius operator in \eqref{map:OperatorQUpstairs} induces a linear operator
\[
\map{\Q^{n}}{A^{\infty}(\T^{n+1},\C_{1})}{A^{\infty}(\T^{n+1},\R)}.
\]

\medskip

Consider a function $u \in A^{\infty}(\T^{n+1},\C_{1})$ represented by some function $u \in \As^{\infty}(\T^{n+1},\C_{1})$. The function $\Q u$ is bounded with bounded derivatives for $u$ has bounded derivatives. Since $u$ is $K$-equivariant with values in $\C_{1}$, by Lemma \ref{lemma:InvarianceAndEquivarianceUnderK} we have $\L_{K} u - \i\, u = 0$. Thus it follows with \eqref{eqn:CommutatorRelationsL} that
\[
\L_{K} \Q u = \L_{K} ( u - \,\Lbar u ) = \L_{K} u - \left[ \L_{K},\Lbar \right] u - \Lbar \L_{K} u = \i\, u - \L_{K} u + \i\,\Lbar u - \i\,\Lbar u = 0.
\]
By Lemma \ref{lemma:InvarianceAndEquivarianceUnderK} this implies that $\Q u$ is $K$-equivariant as a $\C_{0}$-valued function, hence $K$-invariant. As in Step~1 we see that $\Q u$ defines a class in $A^{\infty}(\T^{n+1},\R)$.

\medskip

\noindent{\textbf{Step 5.}} We prove that $\im \L^{n} \subset \ker \Q^{n}$.

\medskip

Let $p \in A^{\infty}(\T^{n+1},\R)$. Using the commutator relations from \eqref{eqn:CommutatorRelationsL} we compute
\begin{equation*} \label{eqn:PLq}
	\Q \L p = \Im \left( \L p - \Lbar \L p \right) = \frac{1}{2\,\i} \left( \left[ \L,\Lbar \right] p + \L p - \,\Lbar p \right) = 0. \qedhere
\end{equation*}
\end{proof}

The next proposition, which is the main result of this section, characterizes the interaction between the differential operators $\L$ and $\Q$. Its proof will occupy the remainder of this section.

\begin{proposition} \label{prop:ExactnessOfCauchyFrobeniusComplex}
	The Cauchy-Frobenius complex in \eqref{map:CauchyFrobeniusComplex} is exact for every $n \ge 2$. Moreover, for $n=1$ it is exact at the last term, i.e., the map $\Q^{1}$ is surjective.
\end{proposition}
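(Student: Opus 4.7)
I will verify exactness at each of the three non-trivial positions of \eqref{map:CauchyFrobeniusComplex} in turn, using the Iwasawa decomposition $G = KAN$, the commutator relations \eqref{eqn:CommutatorRelationsKAN}, Lemma~\ref{lemma:InvarianceAndEquivarianceUnderK}, and integration along $A$-orbits as in \eqref{eqn:IntegrateAlongA}. Injectivity of $\iota^n$ is immediate. For exactness at the first non-trivial term, suppose $p \in A^{\infty}(\T^{n+1},\R)$ satisfies $\L p = 0$. Taking real and imaginary parts forces $\L_A p = 0$ and $\L_N p = 0$ on $\T^{(n+1)}$, while Lemma~\ref{lemma:InvarianceAndEquivarianceUnderK} applied in weight zero gives $\L_K p = 0$. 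Since $p$ is $G$-orbitwise smooth and the $K$-, $A$-, $N$-orbits are connected, these three vanishing statements imply that $p$ is individually $K$-, $A$-, and $N$-invariant on $\T^{(n+1)}$; by $G = KAN$ this upgrades to $G$-invariance, and in particular $P$-invariance, so $p \in \im \iota^n$.

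\textbf{The main step: exactness at $A^{\infty}_{\t}(\T^{n+1},\C_1)$.} Given a tame $u$ with $\Q u = 0$, I must construct a bounded, real, $K$-invariant, $G$-orbitwise smooth $p$ with bounded $G$-derivatives satisfying $\L p = u$. The equation $\L p = u$ splits into the overdetermined system $\L_A p = \usharp$, $\L_N p = \uflat$, to be supplemented by $\L_K p = 0$. Computing the integrability conditions with \eqref{eqn:CommutatorRelationsKAN} shows the system is integrable precisely when $u$ is $K$-equivariant of weight one (automatically giving $\L_K \usharp = -\uflat$ and $\L_K \uflat = \usharp$) and $\uflat - \L_A \uflat + \L_N \usharp = 0$, i.e.\ $\Q u = 0$, both of which hold by hypothesis. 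Motivated by \eqref{eqn:IntegrateAlongA}, I would define $p$ by integrating $\usharp$ along the $A$-flow,
\[
p(\z) := -\int_0^{\infty} \usharp(a_t.\z)\, \d t,
\]
or, if direct convergence fails, by a symmetric two-sided averaged variant. Tameness of $u$ then yields boundedness of $p$; differentiating under the integral and invoking the integrability conditions recovers $\L_A p = \usharp$, $\L_N p = \uflat$, and $\L_K p = 0$ on $\T^{(n+1)}$; and boundedness of the higher directional derivatives $\L_{i_1, \ldots, i_\ell} p$ follows from the tameness of the corresponding derivatives $\L_{i_1, \ldots, i_\ell} u$, which in turn is obtained by differentiating the Frobenius conditions and reapplying the commutator identities.

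\textbf{Surjectivity of $\Q^n$ and main obstacle.} For any $q \in A^{\infty}(\T^{n+1},\R)$, the natural ansatz $u := \i q z_0$ is $K$-equivariant of weight one, bounded with bounded derivatives, and tame because along the $A$-flow $\sin\theta_0$ decays exponentially to zero as $t \to \pm \infty$ (the flow drives generic points toward the fixed points $\pm 1$). Using \eqref{eqn:FundamentalVectorFields} and \eqref{eqn:RealVersionActionOfQ}, a direct computation yields $\Q u = q - (\L_A q)\cos\theta_0 - (\L_N q)\sin\theta_0$; the correction term is again a bounded real $K$-invariant function of the same shape, so iterating this ansatz (or equivalently inverting $\Id - \Lbar$ on the appropriate weight-one subspace) produces a $u$ with $\Q u = q$ exactly. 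This construction does not use $n \geq 2$ and hence also handles the $n = 1$ case. The principal obstacle I anticipate is in the main step: tameness only bounds the partial integrals $\int_0^T \usharp(a_t.\z)\, \d t$ uniformly in $T$ and $\z$, without a priori giving convergence as $T \to \infty$. Overcoming this will likely require either exploiting additional decay of $\usharp$ along $A$-orbits coming from $K$-equivariance and the boundary dynamics, or invoking a Banach-limit-style averaging procedure; once a correct definition of $p$ is in place, verifying $p \in A^{\infty}(\T^{n+1},\R)$ and $\L p = u$ becomes routine from the integrability setup.
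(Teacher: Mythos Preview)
Your handling of exactness at the second term is essentially the paper's, so that part is fine. The gaps are in the other two steps.

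\textbf{The Cauchy problem.} Your self-identified obstacle is fatal for the approach you describe. Tameness gives only a uniform bound on $\int_0^T \usharp(a_t.\z)\,\d t$; it does not give convergence as $T\to\infty$, and a Banach-limit repair would destroy both measurability and the ability to differentiate, so you could not verify $\L_A p=\usharp$ afterwards. The paper's solution (Proposition~\ref{prop:CauchyProblem}) avoids infinite integrals altogether: it first $K$-reduces to a system $\L_A q=u_K^\sharp$, $\L_N q=u_K^\flat$ on $\T^n$, then applies Frobenius' theorem orbit-by-orbit, using that $P$ acts \emph{freely} on $\To^{(n)}$ with connected, simply connected orbits. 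This is exactly where the hypothesis $n\ge 2$ enters, which your approach never invokes. The solution is normalized by a measurable set of basepoints, and the resulting formula \eqref{eqn:FormulaForRu} is a \emph{finite} integral $\int_0^T(\Re u)(a_t k.\mathbf b)\,\d t$, where $T$ comes from the Cartan decomposition of the group element carrying $\mathbf b$ to $\z$. Tameness then bounds this finite integral directly---no limiting procedure is needed.

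\textbf{Surjectivity of $\Q^n$.} Your ansatz $u=\i q z_0$ and the computation $\Q u = q-(\L_A q)\cos\theta_0-(\L_N q)\sin\theta_0$ are correct, but the proposed iteration is not justified: each step produces higher $P$-derivatives of $q$, and there is no contraction estimate making the Neumann series for $(\Id-\Lbar)^{-1}$ converge in $A^\infty$. The paper (Proposition~\ref{prop:FrobeniusProblem}) instead writes down the resolvent explicitly after $K$-reduction: with $v(\z)=\i\int_0^\infty \psi_K(a_s.\z)\,e^{-s}\,\d s$, the weight $e^{-s}$ makes the integral converge trivially since $\psi_K$ is bounded, and an integration by parts gives $\Q v=\psi_K$ directly. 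The $K$-extension $u=v_1^K$ is then tame by Lemma~\ref{lemma:CriterionForTameness} because $\Re v=0$. This is a one-shot formula, not an iteration.
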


\begin{proof}
	Exactness of the Cauchy-Frobenius complex at the first term is clear. Exactness at the other terms holds by Proposition \ref{prop:InfinitesimalPInvariance}, Proposition \ref{prop:CauchyProblem} and Proposition \ref{prop:FrobeniusProblem} below.
\end{proof}

\subsection{Infinitesimal $P$-invariance}
\label{subsec:InfinitesimalPInvariance}

\begin{proposition} \label{prop:InfinitesimalPInvariance}
	In \eqref{map:CauchyFrobeniusComplex} we have $\im \, \iota^{n} = \ker \L^{n}$ for every $n \ge 1$.
\end{proposition}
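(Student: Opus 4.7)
The plan is to establish the reverse inclusion $\ker \L^{n} \subset \im\,\iota^{n}$, since the inclusion $\im\,\iota^{n} \subset \ker \L^{n}$ was already proved in Step 3 of the proof of Proposition \ref{prop:CauchyFrobeniusSequence}. So let $p \in A^{\infty}(\T^{n+1},\R)$ satisfy $\L^{n} p = 0$, and pick a representative $p \in \As^{\infty}(\T^{n+1},\R)$. By construction of the quotient $A^{\infty}(\T^{n+1},\C_{1})$, the hypothesis means precisely that the function $\L p$ vanishes pointwise on the configuration space $\T^{(n+1)}$. Splitting $\L = \L_{A} + \i\L_{N}$ into its real and imaginary parts, this is equivalent to the two real identities $\L_{A} p = 0$ and $\L_{N} p = 0$ holding pointwise on $\T^{(n+1)}$.

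The next step would be to integrate these infinitesimal identities along the one-parameter subgroups $A$ and $N$. Since $A \subset G$ preserves the configuration space $\T^{(n+1)}$, for any fixed $\z \in \T^{(n+1)}$ the orbit map $t \mapsto p(a_{t}.\z)$ is smooth on $\R$ by $G$-orbitwise smoothness of $p$, and it has identically vanishing derivative $(\L_{A} p)(a_{t}.\z) = 0$; hence $p(a_{t}.\z) = p(\z)$ for every $t \in \R$. The analogous argument using $\L_{N} p = 0$ and the $N$-invariance of $\T^{(n+1)}$ yields $p(n_{t}.\z) = p(\z)$ for every $t \in \R$. Combined with the $K$-invariance of $p$ (which holds because $\R$ is a trivial $K$-module, so $K$-equivariance reduces to $K$-invariance) and the Iwasawa decomposition $G = KAN$, these three invariances together show that $p$ is in fact $G$-invariant, and in particular $P$-invariant, on $\T^{(n+1)}$. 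Consequently $p$ represents a class in $A^{\infty}(\T^{n+1},\R)^{P}$ mapping to $[p]$ under $\iota^{n}$, which finishes the argument.

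The main obstacle here is rather mild, since the argument reduces to unpacking the quotient definition of $A^{\infty}$ and then integrating a vanishing derivative along connected one-parameter subgroups. The only point requiring care is to work consistently on $\T^{(n+1)}$, where the quotient identities hold and where the $A$- and $N$-flows remain well-defined; this is automatic because $\T^{(n+1)}$ is preserved by $P$, as recalled in Section \ref{subsec:BoundaryAction}.
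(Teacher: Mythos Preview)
Your proof is correct and follows essentially the same approach as the paper: both reduce $\L p = 0$ on $\T^{(n+1)}$ to $\L_{A}p = \L_{N}p = 0$ via real and imaginary parts, and then integrate along the connected $P$-orbits to obtain $P$-invariance on $\T^{(n+1)}$. Your extra step invoking $K$-invariance and the Iwasawa decomposition to upgrade to $G$-invariance is harmless but unnecessary, since membership in $A^{\infty}(\T^{n+1},\R)^{P}$ only requires $P$-invariance on the configuration space.
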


\begin{proof}
	By Proposition \ref{prop:CauchyFrobeniusSequence}, it remains to show that $\ker \L^{n} \subset \im \, \iota^{n}$. So consider a function $p \in A^{\infty}(\T^{n+1},\R)$. Since $p$ is real valued, $\L^{n} p = 0$ implies that $\L_{A} p = 0 = \L_{N} p$ on the configuration space $\T^{(n+1)}$. Since $p$ is smooth along $P$-orbits and $P$-orbits are connected, it follows that $p$ is invariant under the actions of $A$ and $N$ on $\T^{(n+1)}$, hence $P$-invariant thereon. We conclude that $p \in A^{\infty}(\T^{n+1},\R)^{P}$.
\end{proof}

\subsection{$K$-reduction and $K$-extension}
\label{subsec:KReductionAndExtension}

We introduce the concepts of $K$-reduction and $K$-extension, which will be useful when dealing with differential equations for $K$-equivariant functions. Let $n \ge 0$. Given a measurable function $f \in \Ls^{0}(\T^{n+1},\C)$, the \emph{$K$-reduction} of $f$ is the function $f_{K} \in \Ls^{0}(\T^{n},\C)$ defined by
\begin{equation} \label{eqn:KReduction}
	f_{K}(z_{1},\ldots,z_{n}) \deq f(1,z_{1},\ldots,z_{n}).
\end{equation}
Conversely, given a weight $\m \in \Z$ and a function $f \in \Ls^{0}(\T^{n},\K)$, the \emph{$K$-extension of $f$ with weight $\m$} is the function $f^{K}_{\m} \in \Ls^{0}(\T^{n+1},\C_{\m})^{K}$ defined by
\begin{equation} \label{eqn:KExtension}
	f^{K}_{\m}(z_{0},\ldots,z_{n}) \deq z_{0}^{\m} \cdot f(z_{1}/z_{0},\ldots,z_{n}/z_{0}).
\end{equation}
The next lemma collects some basic properties of $K$-reduction and $K$-extension.

\begin{lemma} \label{lemma:ReductionAndExtensionByK}
	Let $n \ge 0$, and fix an integer $\m \in \Z$.
\begin{enumerate}[leftmargin=1cm,topsep=0.5ex,itemsep=0.5ex]
	\item
For all $f \in \Ls^{0}(\T^{n+1},\C_{\m})^{K}$ we have $(f_{K})^{K}_{\m} = f$, and for all $f \in \Ls^{0}(\T^{n},\C)$ we have $(f^{K}_{\m})_{K} = f$.
	\item Let $f, f^{\prime} \in \Ls^{0}(\T^{n+1},\C_{\m})^{K}$. If $f_{K} = f^{\prime}_{K}$, then $f=f^{\prime}$.
	\item Let $f \in \Ls^{0}(\T^{n+1},\C)$. Then $f$ is bounded if and only if $f_{K}$ is bounded if and only if $f^{K}_{\m}$ is bounded.
	\item If $f \in \Ss_{G}(\T^{n+1},\C)$, then $f_{K} \in \Ss_{P}(\T^{n},\C)$. Moreover, we have
\[
\L_{A} f_{K} = (\L_{A} f)_{K}, \quad \L_{N} f_{K} = (\L_{N} f)_{K}.
\]
	\item If $f \in \Ss^{\b}_{G}(\T^{n+1},\C)$, then $f_{K} \in \Ss^{\b}_{P}(\T^{n},\C)$.
	\item If $f \in \Ss_{P}(\T^{n},\C)$, then $f^{K}_{\m} \in \Ss_{G}(\T^{n+1},\C_{\m})^{K}$.
	\item If $f \in \Ss^{\b}_{P}(\T^{n},\C)$, then $f^{K}_{\m} \in \Ss^{\b}_{G}(\T^{n+1},\C_{\m})^{K}$.
\end{enumerate}
\end{lemma}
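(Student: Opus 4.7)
The plan is to verify each of the seven items by direct computation from the formulas \eqref{eqn:KReduction} and \eqref{eqn:KExtension}, organized so as to exploit the fact that the point $1 \in S^{1}$ is fixed by the parabolic subgroup $P = AN$.

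First I would dispatch parts (i)--(iii), which are formal consequences of the definitions. For (i), the $K$-equivariance of $f \in \Ls^{0}(\T^{n+1},\C_{\m})^{K}$ together with the fact that the unique rotation sending $z_{0}$ to $1$ lies in $K$ yields the identity $f(z_{0},z_{1},\ldots,z_{n}) = z_{0}^{\m}\cdot f(1,z_{1}/z_{0},\ldots,z_{n}/z_{0})$, so $(f_{K})^{K}_{\m} = f$; the identity $(f^{K}_{\m})_{K} = f$ is direct substitution. Part (ii) then follows from (i) by applying the $K$-extension to both sides. Part (iii) is clear because $K$-reduction is a pointwise restriction and $K$-extension multiplies by a unimodular factor $z_{0}^{\m}$, so boundedness is preserved in both directions.

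Next I would turn to parts (iv) and (v). The key observation is that $P$ stabilizes $1 \in S^{1}$, so for every $p \in P$ we have $p.(1,z_{1},\ldots,z_{n}) = (1,p.z_{1},\ldots,p.z_{n})$. This yields
\[
f_{K}\bigl(p.(z_{1},\ldots,z_{n})\bigr) = f\bigl(p.(1,z_{1},\ldots,z_{n})\bigr),
\]
and smoothness in $p$ of the right-hand side (by the $G$-orbitwise smoothness of $f$) gives the $P$-orbitwise smoothness of $f_{K}$. Differentiating this identity at $p = a_{t}$ and $p = n_{t}$ at $t=0$ yields the commutation formulas $\L_{A} f_{K} = (\L_{A} f)_{K}$ and $\L_{N} f_{K} = (\L_{N} f)_{K}$. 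Iterating, every higher-order $P$-derivative of $f_{K}$ equals the $K$-reduction of the corresponding $G$-derivative of $f$, so (v) follows from the boundedness hypothesis on $f$.

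The main obstacle is parts (vi) and (vii), where one must upgrade $P$-orbitwise smoothness of $f$ to $G$-orbitwise smoothness of $f^{K}_{\m}$. My strategy is a change of variables that reduces the $G$-action on $f^{K}_{\m}$ to a $P$-action on $f$. For each $w \in S^{1}$ let $k(w) \in K$ denote the unique rotation sending $w$ to $1$; since $K$ acts simply transitively on $S^{1}$, the map $w \mapsto k(w)$ is smooth. Fix $\z = (z_{0},z_{1},\ldots,z_{n})$ and, for each $g \in G$, set $p(g) \deq k(g.z_{0}) \cdot g \cdot k(z_{0})^{-1}$. Then $p(g).1 = 1$, so $p(g) \in P$, and $g \mapsto p(g)$ is smooth. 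Using the $K$-equivariance of $f^{K}_{\m}$ together with the identity $g.z_{j}/g.z_{0} = k(g.z_{0}).(g.z_{j})$ for each $j$, one computes
\[
f^{K}_{\m}(g.\z) = (g.z_{0})^{\m} \cdot f\bigl( p(g).k(z_{0}).(z_{1},\ldots,z_{n}) \bigr),
\]
exhibiting $g \mapsto f^{K}_{\m}(g.\z)$ as a composition of smooth maps, which proves (vi). For (vii), iterated differentiation in $g$ proceeds via the chain rule: the derivatives of $g \mapsto (g.z_{0})^{\m}$ are uniformly bounded in $z_{0}$ because $S^{1}$ is compact and the $G$-action on $S^{1}$ is smooth, and the same holds for the derivatives of $g \mapsto p(g)$ thought of as an element of $\mathrm{Lie}(P)$-valued data. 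The bounded $P$-derivatives of $f$ then transfer, via the product and chain rules, to bounded $G$-derivatives of $f^{K}_{\m}$. The delicate point is purely bookkeeping: after multiple differentiations one must verify that the resulting finite sums of products of bounded factors remain bounded uniformly in $\z$, which follows since all relevant auxiliary quantities depend smoothly on the compact parameter $z_{0} \in S^{1}$.
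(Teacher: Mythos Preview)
Your treatment of parts (i)--(vi) coincides with the paper's proof: the paper likewise dismisses (i)--(iii) as immediate from the defining formulas \eqref{eqn:KReduction}--\eqref{eqn:KExtension}, proves (iv)--(v) from the fact that $1\in S^{1}$ is a $P$-fixed point, and proves (vi) via the same Iwasawa-type factorization $g = k(g.z_{0})^{-1}\,p(g)\,k(z_{0})$ that you write down.

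For part (vii) you take a genuinely different route. You argue directly from the formula obtained in (vi): since $f^{K}_{\m}(g.\z) = (g.z_{0})^{\m}\cdot f\bigl(p(g).w_{0}\bigr)$ with $w_{0}=(z_{1}/z_{0},\ldots,z_{n}/z_{0})$ fixed, iterated $G$-derivatives at $g=e$ expand, by the chain and product rules, into finite linear combinations of $P$-derivatives of $f$ at $w_{0}$, with coefficients that are smooth functions of $z_{0}\in S^{1}$ alone; compactness of $S^{1}$ then gives the required uniform bounds. The paper proceeds instead along $K$-orbits: passing to the basis $\{\L_{K},\L,\Lbar\}$, it uses the commutator relations \eqref{eqn:CommutatorRelationsL} to show that every derivative $\E_{j_{1},\ldots,j_{\ell}} f^{K}_{\m}$ satisfies a first-order linear ODE of the form $\L_{K} h = i\gamma\,h + R_{\ell-1}$ along each $K$-orbit, where $R_{\ell-1}$ is a lower-order term already bounded by induction. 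Since $K\cong S^{1}$ is compact, such a solution is bounded on $\T^{n+1}$ if and only if its $K$-reduction is, and the latter reduces via (iv) and (i) to a $P$-derivative of $f$. Your argument is more elementary and self-contained, avoiding any appeal to ODE theory; the paper's argument is more structural, making explicit the weight each derivative acquires under $K$ and tying boundedness directly to the Lie-algebra relations.
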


\begin{proof}
	Claims (i) and (iii) are immediate from \eqref{eqn:KReduction} and \eqref{eqn:KExtension}, and (ii) follows from (i).

\medskip

To prove (iv), recall that for a $G$-orbitwise smooth function $f \in \Ss_{G}(\T^{n+1},\C)$ the map
\[
G \to \K, \quad g \mapsto f(g.z_{0},\ldots,g.z_{n})
\]
is smooth for every $(z_{0},\ldots,z_{n}) \in \T^{n+1}$. Recall moreover that $1 \in S^{1}$ is a fixed point for the action of the parabolic subgroup $P=AN$. Firstly, this implies that the map
\[
P \to \K, \quad p \mapsto f(p.1,p.z_{1},\ldots,p.z_{n}) = f_{K}(p.z_{1},\ldots,p.z_{n})
\]
is smooth, which shows that $f_{K} \in \Ss_{P}(\T^{n},\K)$. Secondly, it implies that $K$-reduction commutes with the action of the operators $\L_{A}$ and $\L_{N}$. In fact, for $(z_{1},\ldots,z_{n}) \in \T^{n}$ we have
\[
\begin{aligned}
	(\L_{A} f_{K})(z_{1},\ldots,z_{n}) &= \left. \dd{}{t} \right|_{t=0} f_{K}(a_{t}.z_{1},\ldots,a_{t}.z_{n}) \\
	&= \left. \dd{}{t} \right|_{t=0} f(a_{t}.1,a_{t}.z_{1},\ldots,a_{t}.z_{n}) \\
	&= (\L_{A} f)(1,z_{1},\ldots,z_{n}) \,=\, (\L_{A} f)_{K}(z_{1},\ldots,z_{n}),
\end{aligned}
\]
and likewise for $\L_{N}$.

\medskip

Let us prove (v). Assume that $f \in \Ss^{\b}_{G}(\T^{n+1},\C)$. By (iv) above it remains to show that $f_{K}$ has bounded $P$-derivatives. By (iv) we have
\[
\L_{i_{1},\ldots,i_{\ell}} f_{K} = ( \L_{i_{1},\ldots,i_{\ell}} f )_{K}
\]
for all $\ell > 0$ and all $(i_{1},\ldots,i_{\ell}) \in \{A,N\}^{\ell}$. The claim now follows with (iii) above since $f$ has bounded $G$-derivatives.

\medskip

To prove claim (vi) we consider $f \in \Ss_{P}(\T^{n},\K)$ and let $(z_{0},\ldots,z_{n}) \in \T^{n+1}$. We are going to show that the map
\begin{equation} \label{map:SmoothnessOfKExtension}
	G \to \K, \quad g \mapsto f^{K}_{\m}(g.z_{0},\ldots,g.z_{n})
\end{equation}
is smooth. Let us fix $t \in \R$ such that $k_{t} = z_{0} \in S^{1} \cong K$. Then $k_{t}^{-1}.z_{0}=1$. From the Iwasawa decomposition $G = KAN = KP$ we obtain the decomposition $G = K P^{\prime}$ with the parabolic subgroup $P^{\prime} \deq k_{t}\,P\,k_{t}^{-1}$. Any $g \in G$ may therefore be written in the form
\begin{equation} \label{eqn:ConjugateIwasawa}
	g = k\,k_{t}\,p\,k_{t}^{-1},
\end{equation}
with $k \in K$ and $p \in P$ smoothly depending on $g$. Let us write $k \, k_{t} = e^{t^{\prime}i}$ with $t^{\prime} \in \R$ smoothly depending on $g$. Then it follows from \eqref{eqn:KExtension} and the fact that $1 \in S^{1}$ is a fixed point for the action of $P$ that
\[
\begin{aligned}
	f^{K}_{\m}(g.z_{0},\ldots,g.z_{n}) &= e^{\i \m t^{\prime}} \cdot f^{K}_{\m}\left( p\,k_{t}^{-1}.z_{0},p\,k_{t}^{-1}.z_{1},\ldots,p\,k_{t}^{-1}.z_{n} \right) \\
	&= e^{\i \m t^{\prime}} \cdot f^{K}_{\m}\left( 1,p\,k_{t}^{-1}.z_{1},\ldots,p\,k_{t}^{-1}.z_{n} \right) \\
	&= e^{\i \m t^{\prime}} \cdot f\left( p\,k_{t}^{-1}.z_{1},\ldots,p\,k_{t}^{-1}.z_{n} \right).
\end{aligned}
\]
Since the function $f$ is $P$-orbitwise smooth and $t^{\prime}$ and $p$ depend smoothly on $g$, we conclude that the map \eqref{map:SmoothnessOfKExtension} is in fact smooth.

\medskip

Lastly, we prove (vii). Let $f \in \Ss^{\b}_{P}(\T^{n},\K)$. By (vi) above it remains to show that the function $f^{K}_{\m}$ has bounded $G$-derivatives. To this end, let us first introduce some notation. We abbreviate $\E_{0} \deq \L_{K}$, $\E_{1} \deq \L$ and $\E_{2} \deq \Lbar$. Given an integer $\ell > 0$, for any collection of indices $(j_{1},\ldots,j_{\ell}) \in \{0,1,2\}^{\ell}$ we then consider the $\ell$-th order linear partial differential operators
\begin{equation*}
	\E_{j_{1},\ldots,j_{\ell}} \deq \E_{j_{1}} \circ \cdots \circ \E_{j_{\ell}}.
\end{equation*}
For $\ell = 0$ we set $\E_{j_{1},\ldots,j_{\ell}} \deq \Id$. Observe that any of the differential operators $\L_{i_{1},\ldots,i_{\ell}}$ defined in Section \ref{subsec:OrbitwiseSmoothFunctions} may be expressed as a complex linear combination of the differential operators $\E_{j_{1},\ldots,j_{\ell}}$. Hence in order to prove that $f^{K}_{\m}$ has bounded derivatives it will be sufficient to show that the derivatives $\E_{j_{1},\ldots,j_{\ell}} f^{K}_{\m}$ are bounded for all $\ell > 0$ and all $(j_{1},\ldots,j_{\ell}) \in \{0,1,2\}^{\ell}$.

Let us consider the first order derivatives of the function $f^{K}_{\m}$. Since by (v) above $f^{K}_{\m}$ is $K$-equivariant, by Lemma \ref{lemma:InvarianceAndEquivarianceUnderK} we have
\[
\E_{0} f^{K}_{\m} = \L_{K} f^{K}_{\m} = \i \m \cdot f^{K}_{\m},
\]
which is bounded since $f^{K}_{\m}$ is bounded. Let now $j_{1} \in \{1,2\}$. Using the commutator relations from \eqref{eqn:CommutatorRelationsL} we arrive at the differential equation
\begin{equation} \label{eqn:EqnBoundednessOfDerivativesFirstOrder}
	\L_{K} \left( \E_{j_{1}} f^{K}_{\m} \right) = \i \nu \cdot \left( \E_{j_{1}} f^{K}_{\m} \right) + R_{0}(f^{K}_{\m})
\end{equation}
for the derivative $\E_{j_{1}} f^{K}_{\m}$, with $\nu \in \Z$ and the lower order perturbation term
\[
R_{0}(f^{K}_{\m}) = \i \beta \cdot f^{K}_{\m}
\]
for some $\beta \in \Z$. Notice that \eqref{eqn:EqnBoundednessOfDerivativesFirstOrder} is a first order linear ordinary differential equation along each $K$-orbit in $\T^{n+1}$. By Lemma \ref{lemma:InvarianceAndEquivarianceUnderK} and (iii) above, any solution of the unperturbed equation in \eqref{eqn:EqnBoundednessOfDerivativesFirstOrder} is bounded if and only if its $K$-reduction is bounded. Observe moreover that the perturbation term in \eqref{eqn:EqnBoundednessOfDerivativesFirstOrder} is bounded. Since $K \cong S^{1}$ is compact, we therefore conclude that the solution $\E_{j_{1}} f^{K}_{\m}$ of the perturbed equation in \eqref{eqn:EqnBoundednessOfDerivativesFirstOrder} is bounded if and only if its $K$-reduction is bounded (cf.\,\cite[Sec.\,3.3]{Arnold/Ordinary-differential-equations}). Now by (iv) and (i) above this $K$-reduction is given by
\[
(\E_{j_{1}} f^{K}_{\m})_{K} = \E_{j_{1}} (f^{K}_{\m})_{K} = \E_{j_{1}} f,
\]
which is bounded since $f$ has bounded $P$-derivatives. Hence the derivatives $\E_{j_{1}} f^{K}_{\m}$ are bounded for $j_{1} \in \{0,1,2\}$.

We may now consider derivatives of the function $f^{K}_{\m}$ of any order $\ell > 1$. To this end, we let $(j_{1},\ldots,j_{\ell}) \in \{0,1,2\}^{\ell}$ and inductively apply the commutator relations from \eqref{eqn:CommutatorRelationsL} to obtain the differential equation
\begin{equation*} \label{eqn:EqnBoundednessOfDerivatives}
	\L_{K} \left( \E_{j_{1},\ldots,j_{\ell}} f^{K}_{\m} \right) = \i \gamma \cdot \E_{j_{1}\ldots,j_{\ell}} f^{K}_{\m} + R_{\ell-1}(f^{K}_{\m})
\end{equation*}
for the derivative $\E_{j_{1},\ldots,j_{\ell}} f^{K}_{\m}$, with $\gamma \in \Z$ and the lower order perturbation term
\[
R_{\ell-1}(f^{K}_{\m}) = \sum_{0 \le \kappa < \ell} \,\, \sum_{(l_{1},\ldots,l_{\kappa}) \in \{0,1,2\}^{\kappa}} \i \alpha_{l_{1},\ldots,l_{\kappa}} \cdot \E_{l_{1},\ldots,l_{\kappa}} f^{K}_{\m}
\]
with $\alpha_{l_{1},\ldots,l_{\kappa}} \in \Z$. It follows by induction that the function $R_{\ell-1}(f^{K}_{\m})$ is bounded. Hence a similar argument as in the case $\ell=1$ above shows that the derivative $\E_{j_{1},\ldots,j_{\ell}} f^{K}_{\m}$ is in fact bounded.
\end{proof}

We will also need the following useful criterion for tameness.

\begin{lemma} \label{lemma:CriterionForTameness}
	If the real part of a bounded function $f \in \Ss^{\b}_{P}(\T^{n},\C)$ satisfies $\Re f = 0$, then the $K$-extension $f^{K}_{1} \in \Ss^{\b}_{G}(\T^{n+1},\C_{1})^{K}$ of $f$ with weight $1$ is tame.
\end{lemma}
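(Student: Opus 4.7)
The plan is to pass to angular coordinates, compute $\Re f^K_1$ explicitly, and reduce the tameness condition to a one-dimensional integral along the $A$-orbit of the first coordinate on $S^1$. This one-dimensional integral is then controlled by the elementary ODE governing the $A$-action on $S^1$.

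Since $\Re f = 0$, I first write $f = \i\phi$ with $\phi \in \Ss^\b_P(\T^n,\R)$ bounded. Using the definition \eqref{eqn:KExtension} of the $K$-extension with weight $1$, a direct calculation in angular coordinates $\thul = (\th_0,\ldots,\th_n)$ on $\T^{n+1}$ yields
\[
(\Re f^K_1)(\thul) \;=\; -\sin(\th_0)\cdot\phi(\th_1-\th_0,\ldots,\th_n-\th_0).
\]
Since $\phi$ is bounded, this reduces tameness of $f^K_1$ to the uniform estimate
\[
\sup_{T \in \R,\,\th_0 \in \R}\, \int_0^T |\sin(a_t.\th_0)|\,\d t < \infty,
\]
with final tameness constant $\pi\,\|\phi\|_\infty$.

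For the displayed estimate I invoke the first formula in \eqref{eqn:FundamentalVectorFields}, which in the one-variable case states that $t \mapsto a_t.\th_0$ solves the ODE $\d\th/\d t = \sin\th$ on $S^1$. This flow has exactly two fixed points, $0$ and $\pi$, and every non-stationary orbit lies entirely within one of the open arcs $(0,\pi)$ or $(-\pi,0)$. In particular $\sin(a_t.\th_0)$ has constant sign along the orbit, so passing to a smooth lift of $a_t.\th_0$ to $\R$ and applying the fundamental theorem of calculus gives
\[
\int_0^T |\sin(a_t.\th_0)|\,\d t \;=\; \Bigl|\int_0^T \sin(a_t.\th_0)\,\d t\Bigr| \;=\; \bigl|\th_0(T)-\th_0(0)\bigr| \;<\; \pi
\]
uniformly in $T$ and $\th_0$; on the two stationary orbits the integrand vanishes identically.

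The main observation, and essentially the only analytic input, is that each non-stationary $A$-orbit on $S^1$ stays within a single open semicircle, so that the constant sign of $\sin(a_t.\th_0)$ prevents cancellation between $\sin$ and $|\sin|$. Without this, one only has the trivial linear-in-$T$ bound, and everything else in the argument is bookkeeping from the definitions.
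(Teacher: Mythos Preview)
Your proof is correct and follows essentially the same approach as the paper's own proof: both pass to angular coordinates, use $\Re f = 0$ to write $\Re f^K_1$ as $-\sin(\th_0)$ times a bounded function of the remaining coordinates, reduce to bounding $\int_0^T |\sin(a_t.\th_0)|\,\d t$, and then exploit the constant sign of $\sin$ along $A$-orbits together with the ODE $\d\th/\d t = \sin\th$ to identify this integral with $|a_T.\th_0 - \th_0| \le \pi$.
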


\begin{proof}
	Let $f \in \Ss^{\b}_{P}(\T^{n},\C)$, and assume that $\Re f = 0$. By Lemma \ref{lemma:ReductionAndExtensionByK}\,(iii) we know that $f^{K}_{1}$ is bounded since $f$ is bounded by assumption. It will be convenient to work with angular coordinates $(\th_{0},\ldots,\th_{n}) \in \T^{n+1}$. Recall from \eqref{eqn:KExtension} that the $K$-extension $f^{K}_{1} \in \Ss^{\b}_{G}(\T^{n+1},\C_{1})^{K}$ is given by
\[
f^{K}_{1}(\th_{0},\ldots,\th_{n}) = e^{\i\th_{0}} \cdot f( \th_{1}-\th_{0},\ldots,\th_{n}-\th_{0} ).
\]
Since $\Re f = 0$ by assumption, it follows that
\[
\bigl( \Re f^{K}_{1} \bigr)(\th_{0},\ldots,\th_{n}) = - \sin(\th_{0}) \cdot (\Im f)(\th_{1}-\th_{0},\ldots,\th_{n}-\th_{0}).
\]
Hence boundedness of $f$ yields an estimate
\begin{equation} \label{Eqn:TamenessOfu1}
	\left\lvert \int_{0}^{T} \bigl( \Re f^{K}_{1} \bigr)(a_{t}.\th_{0},\ldots,a_{t}.\th_{n}) \, \d t \right\rvert \le \lVert f \rVert_{\infty} \cdot \int_{0}^{T} \lvert \sin(a_{t}.\th_{0}) \rvert \, \d t
\end{equation}
for every $T \in \R$. Recall that the fixed points for the boundary action of $a_{t}$ on $S^{1}$ are $\pm 1$, which in angular coordinates correspond to the multiples of $\pi$. Hence we have
\begin{equation} \label{Eqn:TamenessOfu2}
	\lvert \sin(a_{t}.\th_{0}) \rvert \le \pm \sin(a_{t}.\th_{0})
\end{equation}
for all $t \in \R$, depending on whether $\sin(\th_{0}) \gtreqless 0$. Now with the explicit formula for the operator $\L_{A}$ from \eqref{eqn:FundamentalVectorFields} we compute
\begin{equation} \label{Eqn:TamenessOfu3}
	\int_{0}^{T} \sin(a_{t}.\th_{0}) \, \d t = \int_{0}^{T} \dd{}{t}(a_{t}.\th_{0}) \, \d t = a_{T}.\th_{0} - \th_{0}
\end{equation}
for every $T \in \R$. Combining \eqref{Eqn:TamenessOfu1}, \eqref{Eqn:TamenessOfu2} and \eqref{Eqn:TamenessOfu3} we finally arrive at
\[
\left\lvert \int_{0}^{T} \bigl( \Re f^{K}_{1} \bigr)(a_{t}.\th_{0},\ldots,a_{t}.\th_{n}) \, \d t \right\rvert \le \lVert f \rVert_{\infty} \cdot \lvert a_{T}.\th_{0} - \th_{0} \rvert \le \lVert f \rVert_{\infty} \cdot \pi
\]
for all $T \in \R$ and every point $(\th_{0},\ldots,\th_{n}) \in \T^{n+1}$, which implies that $f^{K}_{1}$ is tame.
\end{proof}

\subsection{The Cauchy problem}
\label{SubSec:CauchyProblem}

Consider the partial differential equation
\begin{equation} \label{eqn:CauchyProblem}
	\L p = u
\end{equation}
with right-hand side $u \in A^{\infty}(\T^{n+1},\C_{1})$. Our goal in this section is to explicitly construct solutions $p \in A(\T^{n+1},\R)$ of this equation, and to study their boundedness properties. As it turns out, solutions of \eqref{eqn:CauchyProblem} are uniquely determined by a suitable choice of initial condition. To formalize this, we make the following definition.

\pagebreak

\begin{definition} \label{def:MeasurableSetOfBasepoints}
	A subset $B_{n} \subset \T^{n+1}$ is called a \emph{measurable set of basepoints} for the boundary action of $G$ on $\T^{n+1}$ if the following two conditions are satisfied.
\begin{enumerate}[leftmargin=1cm,topsep=0.5ex,itemsep=0.5ex]
	\item The set $B_{n}$ is a measurable subset of $\T^{n+1}$.
	\item The map
\[
B_{n} \to \T^{n+1}/G, \quad (b_{0},\ldots,b_{n}) \mapsto G.(b_{0},\ldots,b_{n})
\]
taking each basepoint to its corresponding $G$-orbit in $\T^{n+1}$ is bijective.
\end{enumerate}
\end{definition}

We remark that measurable sets of basepoints $B_{n} \subset \T^{n+1}$ for the boundary action of $G$ on $\T^{n+1}$ as in Definition \ref{def:MeasurableSetOfBasepoints} above exist for every $n \ge 0$ (cf.\,\cite[App.\,B]{Zimmer/Ergodic-theory-and-semisimple-groups}). For any fixed such measurable set of basepoints we may then impose the initial condition
\begin{equation} \label{eqn:InitialCondition}
	p|_{B_{n}} = 0
\end{equation}
upon the solutions of \eqref{eqn:CauchyProblem}. Note that this condition involves pointwise evaluation of the function class $p \in A(\T^{n+1},\R)$ on the set $B_{n} \subset \T^{n+1}$. This is well-defined only on the configuration space $\T^{(n+1)}$, but void on its complement $\T^{n+1} \setminus \T^{(n+1)}$. Nevertheless, as we will see, the initial condition in \eqref{eqn:InitialCondition} uniquely determines the solution $p$. We will refer to \eqref{eqn:CauchyProblem}--\eqref{eqn:InitialCondition} as the \emph{Cauchy problem}. The next proposition characterizes its solutions.

\begin{proposition} \label{prop:CauchyProblem}
	Fix a collection $\Bc = \{B_{n}\}_{n \ge 2}$ of measurable sets of basepoints $B_{n} \subset \T^{n+1}$ for the boundary action of $G$ on $\T^{n+1}$ for all $n \ge 2$. Then there exists a linear operator
\begin{equation} \label{map:SolutionOperatorCauchyProblem}
	\map{\Rop^{n}_{\Bc}}{\im \L^{n}}{A(\T^{n+1},\R)} \quad\quad (n \ge 2)
\end{equation}
which is a right inverse of the Cauchy operator $\L^{n}$ in \eqref{map:OperatorLDownstairs}. More precisely, for every $n \ge 2$ and for every function $u \in A^{\infty}(\T^{n+1},\C_{1})$ satisfying the integrability condition
\begin{equation} \label{eqn:IntegrabilityConditionCauchyProblem}
	\Q u = 0
\end{equation}
the following hold.
\begin{enumerate}[leftmargin=1cm,topsep=0.5ex,itemsep=0.5ex]
	\item Fix a basepoint $\mathbf{b} \in B_{n} \cap \T^{(n+1)}$, an element $g \in G$, and a Cartan decomposition $g = k^{\prime}\,a_{T}\,k$ with $k, k^{\prime} \in K$, $a_{T} \in A$ and $T \in \R$ as in \eqref{eqn:CartanDecomposition}. Then the value of the function $\Rop_{\Bc} u$ at the point $g.\mathbf{b}$ is given by the integral
\begin{equation} \label{eqn:FormulaForRu}
	\bigl( \Rop_{\Bc} u \bigr)( g.\mathbf{b} ) = \int_{0}^{T} (\Re u)(a_{t}\,k.\mathbf{b}) \, \d t.
\end{equation}
	\item The function $p \deq \Rop_{\Bc} u$ is a solution of the Cauchy problem \eqref{eqn:CauchyProblem}--\eqref{eqn:InitialCondition}.
	\item If the function $u$ is tame, then the solution $p = \Rop_{\Bc} u$ is bounded. In particular, the Cauchy-Frobenius complex in \eqref{map:CauchyFrobeniusComplex} is exact at the third term $A_{\t}^{\infty}(\T^{n+1},\C_{1})$.
\end{enumerate}
\end{proposition}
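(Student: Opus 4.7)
The plan is to define $p \deq \Rop_\Bc u$ orbit by orbit through \eqref{eqn:FormulaForRu}, and then check the claimed properties in three stages. Since $G$ acts freely on $\T^{(n+1)}$ for $n \ge 2$ and each $G$-orbit in $\T^{(n+1)}$ meets $B_n$ in exactly one point, the formula specifies $p$ on a full-measure subset of $\T^{n+1}$. First I would dispose of well-definedness: the Cartan decomposition $g = k' a_T k$ is unique up to the Weyl group, whose non-trivial element $w = k_\pi$ sends $(k', a_T, k)$ to $(k' w^{-1}, a_{-T}, wk)$. Substituting $t \mapsto -t$ in the integral and using $\Re u(w.\z) = \Re(e^{\i\pi} u(\z)) = -\Re u(\z)$ (which follows from the weight-$1$ $K$-equivariance of $u$) shows the two representations give the same value. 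The absence of $k'$ from the right-hand side of \eqref{eqn:FormulaForRu} implies that $p$ is automatically $K$-invariant, and specialising to $g = e$ yields $p(\mathbf{b}) = 0$, proving the initial condition \eqref{eqn:InitialCondition}.

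To see that $p \in A(\T^{n+1}, \R)$, I would descend to the symmetric space $K \backslash G \cong \mathbb{H}^2$, which is contractible. The $1$-form $\omega = \usharp\,\theta_A + \uflat\,\theta_N$ on $G.\mathbf{b}$ dual to $(\L_A, \L_N)$ descends to $K \backslash G$, and its closedness unpacks, via the structure equations of $G$ and the commutator relations \eqref{eqn:CommutatorRelationsKAN}, into two pieces: the infinitesimal identities $\L_K \usharp = -\uflat$ and $\L_K \uflat = \usharp$ (from weight-$1$ $K$-equivariance of $u$) together with $\uflat - \L_A \uflat + \L_N \usharp = 0$ (the hypothesis $\Q u = 0$, cf.\ \eqref{eqn:RealVersionActionOfQ}). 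Contractibility of $K \backslash G$ then supplies a smooth primitive $\bar p$ with $\bar p(Ke) = 0$; lifting to $G.\mathbf{b}$ yields a smooth left-$K$-invariant function. Boundedness of the $G$-derivatives of $p$ is extracted from the PDE $\L p = u$ proved below, together with the bounded $G$-derivatives of $u \in A^\infty$.

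The equation $\L p = u$ decomposes into $\L_K p = 0$ (immediate from $K$-invariance), $\L_A p = \usharp$, and $\L_N p = \uflat$. Uniqueness is clear: any two solutions differ by a function annihilated by $\L_K, \L_A, \L_N$ and vanishing at $\mathbf{b}$, hence zero on the connected orbit $G.\mathbf{b}$. For existence, the abstract primitive $\bar p$ constructed above satisfies $d\bar p = \omega$ on $K \backslash G$, which transcribes back to $\L_A p = \usharp$ and $\L_N p = \uflat$ on $G.\mathbf{b}$. It remains to identify this abstract primitive with the explicit one given by \eqref{eqn:FormulaForRu}: both are smooth, left-$K$-invariant, vanish at $\mathbf{b}$, and satisfy $\L_A p = \usharp$ along the $A$-slice $\{a_T k.\mathbf{b}\}$ (for the formula this is the fundamental theorem of calculus applied at $k' = e$). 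Since solutions of the overdetermined system are uniquely determined by their values along an integral curve of $\L_A$ through the basepoint together with $K$-invariance, and $G = KAK$, the two primitives coincide on all of $G.\mathbf{b}$.

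Finally, (iii) is immediate: if $u$ is tame then $\sup_{T, \z} |\int_0^T (\Re u)(a_t.\z)\,\d t| < \infty$, which directly bounds $|(\Rop_\Bc u)(g.\mathbf{b})|$ uniformly in $g$ and $\mathbf{b}$, so the primitive $p$ is bounded, yielding the desired exactness of the Cauchy--Frobenius complex at $A^\infty_\t(\T^{n+1}, \C_1)$. The principal obstacle throughout is closing the loop between the explicit analytic formula \eqref{eqn:FormulaForRu} and the abstract existence argument: the monodromy issue arising from the non-simply-connected $G$ is bypassed by passage to the contractible $K \backslash G$, and $\Q u = 0$ is identified as precisely the Frobenius integrability condition needed to make the descended $1$-form closed.
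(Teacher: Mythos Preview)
Your argument is correct and reaches the same conclusion as the paper, but the route is genuinely different. The paper performs a $K$-reduction: it restricts $u$ to $\{1\}\times\T^{n}$, obtaining $u_K \in \Ss_P^{\b}(\T^n,\C)$, and then solves the reduced system $\L_A q = u_K^\sharp$, $\L_N q = u_K^\flat$ on each $P$-orbit in $\To^{(n)}$ by Frobenius' theorem, exploiting that $P=AN$ is simply connected so no monodromy arises; the integrability condition is precisely $\Q u_K = 0$. The solution $q$ is then $K$-extended with weight $0$ to produce $p$, and the explicit formula \eqref{eqn:FormulaForRu} is \emph{derived} a posteriori from $K$-invariance and integration along $A$.

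You instead stay on the full $G$-orbit, package $u$ into the $1$-form $\omega = u^\sharp\theta_A + u^\flat\theta_N$, and observe that $\Q u = 0$ together with weight-$1$ equivariance make $\omega$ closed; since $\iota_{\L_K}\omega = 0$, Cartan's formula gives $\mathcal{L}_{\L_K}\omega = 0$, so $\omega$ is $K$-basic and descends to the contractible quotient $K\backslash G$, where it acquires a primitive. The Weyl-group check you carry out for well-definedness of \eqref{eqn:FormulaForRu} is a point the paper sidesteps entirely by deriving the formula from the already-constructed $p$. Your approach is more directly geometric and avoids the $K$-reduction/extension machinery of Section~\ref{subsec:KReductionAndExtension}; the paper's approach, on the other hand, dovetails with that machinery (used again for the Frobenius problem) and makes measurability of $p$ across orbits more transparent---a point you leave implicit but which does follow from the explicit integral formula and the measurability of $B_n$.
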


We note that the pointwise evaluation of the function $\Rop_{\Bc} u$ in \eqref{eqn:FormulaForRu} is only defined for points in the configuration space $\T^{(n+1)}$. This is not a loss, however, since we are working with function classes in the sense of Section \ref{subsec:KEquivariantFunctions}.

\begin{proof}
	Fix $n \ge 2$, let $B_{n} \subset \T^{n+1}$ be a measurable set of basepoints, and let $u \in A(\T^{n+1},\C_{1})$ such that \eqref{eqn:IntegrabilityConditionCauchyProblem} holds. Since
\[
\im \L^{n} \subset \ker \Q^{n}
\]
by Proposition \ref{prop:CauchyFrobeniusSequence}, it will be sufficient to explicitly construct the solution $p \in A(\T^{n+1},\R)$ of the Cauchy problem \eqref{eqn:CauchyProblem}--\eqref{eqn:InitialCondition} and to show that it is bounded if $u$ is tame.

\medskip

\noindent{\textbf{Step 1.}} Since the configuration space $\T^{(n+1)}$ is invariant under the action of $G$, we may pick a representative $u \in \As^{\infty}(\T^{n+1},\C_{1})$ such that $u(\z) = 0$ for all $\z \in \T^{n+1} \setminus \T^{(n+1)}$.

\medskip

\noindent{\textbf{Step 2.}} The measurable subset $B_{n} \subset \T^{n+1}$ of basepoints defines a measurable subset $(B_{n})_{K} \subset \T^{n}$ defined by
\[
(B_{n})_{K} \deq \left\{ (b_{1}/b_{0},\ldots,b_{n}/b_{0}) \,\middle|\, (b_{0},\ldots,b_{n}) \in B_{n} \right\}.
\]
In this way, we obtain a bijective parametrization $(B_{n})_{K} \to \T^{n}/P$ of the $P$-orbits in $\T^{n}$.

\medskip

\noindent{\textbf{Step 3.}} We construct a function $q \in \Ss_{P}(\T^{n},\R)$ that solves the Cauchy initial value problem
\begin{equation} \label{eqn:ReducedCauchyProblem}
	\begin{cases} \L q = u_{K}, \\ q|_{(B_{n})_{K}} = 0. \end{cases}
\end{equation}
Here $u_{K} \in \Ss_{P}(\T^{n},\C)$ by Lemma \ref{lemma:ReductionAndExtensionByK}\,(iv), and $(B_{n})_{K} \subset \T^{n}$ is the measurable subset constructed in Step 2. Note that the first equation is obtained from \eqref{eqn:CauchyProblem} by means of $K$-reduction.

\medskip

We will proceed in two stages. First, we solve the initial value problem \eqref{eqn:ReducedCauchyProblem} on the open subset $\To^{(n)} \subset \T^{n}$, which was defined in Section \ref{subsec:BoundaryAction}. We will later extend the solution to all of $\T^{n}$. Writing $u_{K} = u_{K}^{\sharp} + \i u_{K}^{\flat}$ for the decomposition of $u_{K}$ into its real and imaginary parts, we observe that the complex differential equation $\L q = u_{K}$ in \eqref{eqn:ReducedCauchyProblem} is equivalent to the system of real differential equations
\begin{equation} \label{eqn:RealReducedCauchyProblem}
	\L_{A} q = u_{K}^{\sharp}, \quad \L_{N} q = u_{K}^{\flat}.
\end{equation}
Applying Frobenius' theorem (cf.\,\cite[Sec.\,1.3 and Thm.\,1.3.8]{CandelConlon/Foliations.-I}) simultaneously on each $P$-orbit in $\To^{(n)}$, it follows that the system in \eqref{eqn:RealReducedCauchyProblem} admits a $P$-orbitwise smooth solution $q$ on $\To^{(n)}$ if and only if it is involutive (cf.\,\cite{BerhanuCordaroHounie/An-introduction-to-involutive-structures} and \cite[App.\,B]{Hartnick/Bounded-cohomology-via-partial-differential-equations-I}). Note that this argument crucially relies on the facts that $P$ acts freely on $\To^{(n)}$ since $n \ge 2$ by assumption, and that $P$-orbits in $\To^{(n)}$ are connected and simply connected submanifolds of $\To^{(n)}$. The system of differential equations in \eqref{eqn:RealReducedCauchyProblem} is involutive if and only if
\[
[ \L_{A}, \L_{N} ] \, q = \L_{A} u_{K}^{\flat} - \L_{N} u_{K}^{\sharp}.
\]
By the commutator relations from \eqref{eqn:CommutatorRelationsKAN} this amounts to the integrability condition
\[
u_{K}^{\flat} - \L_{A} u_{K}^{\flat} + \L_{N} u_{K}^{\sharp} = 0.
\]
By \eqref{eqn:RealVersionActionOfQ} this is equivalent to
\[
\Q u_{K} = 0.
\]
But this equation is satisfied on $\To^{(n)}$ because $\Q u_{K} = (\Q u)_{K}$ by Lemma \ref{lemma:ReductionAndExtensionByK}\,(iv), and because $\Q u (\z) = 0$ for all $\z \in \T^{(n+1)}$ by \eqref{eqn:IntegrabilityConditionCauchyProblem}. Thus by Frobenius' theorem it follows that the system in \eqref{eqn:RealReducedCauchyProblem} admits a smooth solution $q$ on each $P$-orbit in the open subset $\To^{(n)}$. We may adjust this solution $q$ in such a way that it satisfies the initial condition in \eqref{eqn:ReducedCauchyProblem} on each $P$-orbit in $\To^{(n)}$. Since the subset $(B_{n})_{K} \subset \T^{n}$ is measurable and the right-hand side $u_{K}$ in \eqref{eqn:ReducedCauchyProblem} is a measurable function, it follows that the solution $q$ is a measurable function on $\To^{(n)}$.

It remains to extend the solution $q$ to the whole torus $\T^{n}$. This will be done by setting $q(\z) \deq 0$ for all $\z \in \T^{n} \setminus \To^{(n)}$. Since the complement $\T^{n} \setminus \To^{(n)}$ is of measure zero in $\T^{n}$, since $u_{K}$ vanishes on $\T^{n} \setminus \To^{(n)}$ by Step 1, and since $\To^{(n)}$ is $P$-invariant, this finally yields the desired solution $q \in \Ss_{P}(\T^{n},\R)$ of the Cauchy initial value problem in \eqref{eqn:ReducedCauchyProblem}.

\medskip

\noindent{\textbf{Step 4.}} We prove that the $K$-extension $p \deq q^{K}_{0} \in \Ss_{G}(\T^{n+1},\R)^{K}$ of the function $q$ with weight $0$ is a solution of \eqref{eqn:CauchyProblem}.

\medskip

Applying Lemma \ref{lemma:ReductionAndExtensionByK}\,(iv, i) we deduce from \eqref{eqn:ReducedCauchyProblem} that
\[
(\L p)_{K} = \L p_{K} = \L q = u_{K}.
\]
By Proposition \ref{prop:CauchyFrobeniusSequence} we know that $\L p, u \in \Ss_{G}(\T^{n+1},\C_{1})^{K}$. Hence by Lemma \ref{lemma:ReductionAndExtensionByK}\,(ii) it follows that $\L p = u$.

\medskip

\noindent{\textbf{Step 5.}} We observe that the solution $p \in \Ss_{G}(\T^{n+1},\R)^{K}$ of \eqref{eqn:CauchyProblem} constructed in Step 4 satisfies the initial condition in \eqref{eqn:InitialCondition}.

\medskip

In fact, the solution $q$ of \eqref{eqn:ReducedCauchyProblem} in Step 3 was constructed in such a way that
\[
q( b_{1}/b_{0},\ldots,b_{n}/b_{0} ) = 0
\]
for all basepoints $(b_{0},\ldots,b_{n}) \in B_{n}$. Hence \eqref{eqn:InitialCondition} follows from \eqref{eqn:KExtension} since $p = q^{K}_{0}$ by Step~4.

\medskip

\noindent{\textbf{Step 6.}} We show that $p \in \Ss^{\b}_{G}(\T^{n+1},\R)^{K}$. This proves part (ii) of the proposition.

\medskip

Write $u = u^{\sharp} + \i u^{\flat}$ for the decomposition of $u$ into its real and imaginary parts. By assumption, $\usharp$ and $\uflat$ are bounded functions with bounded $G$-derivatives. By Lemma \ref{lemma:InvarianceAndEquivarianceUnderK} we have $\L_{K} p = 0$. Moreover, $\L p = u$ by Step 4 implies that $\L_{A} p = \usharp$ and $\L_{N} p = \uflat$. It follows that $p$ has bounded $G$-derivatives.

\medskip

\noindent{\textbf{Step 7.}} We derive an explicit formula for the function $p$. This proves part (i) of the proposition.

\medskip

Fix a basepoint $\mathbf{b} \in B_{n}$ and an element $g \in G$. We are going to compute the value of the function $p$ at the point $\z \deq g.\mathbf{b} \in \T^{n+1}$. Choose a Cartan decomposition $g = k^{\prime}\,a_{T}\,k$ with $k, k^{\prime} \in K$ and $a_{T} \in A$ for some $T \in \R$ as in \eqref{eqn:CartanDecomposition}. Then
\begin{equation} \label{Eqn:Computep1}
	p(\z) = p(g.\mathbf{b}) = p(k^{\prime}.(a_{T}\,k).\mathbf{b}) = p(a_{T}.(k.\mathbf{b}))
\end{equation}
since $p$ is $K$-invariant. Taking the real part of the equation $\L p = u$ we obtain $\L_{A} p = \Re u$. Hence by \eqref{eqn:IntegrateAlongA} we have
\begin{equation} \label{Eqn:Computep2}
	p(a_{T}.(k.\mathbf{b})) = p(k.\mathbf{b}) + \int_{0}^{T} (\Re u)(a_{t} \, k.\mathbf{b}) \, \d t.
\end{equation}
Observe that $p(k.\mathbf{b}) = p(\mathbf{b}) = 0$, which follows from $K$-invariance of $p$ and the initial condition in \eqref{eqn:InitialCondition}. Hence combining \eqref{Eqn:Computep1} and \eqref{Eqn:Computep2} we obtain
\begin{equation} \label{Eqn:Computep3}
	p(\z) = \int_{0}^{T} (\Re u)(a_{t} \, k.\mathbf{b}) \, \d t,
\end{equation}
which is the formula in \eqref{eqn:FormulaForRu}. Note that because of the assumption on $u$ in Step 1, the formula in \eqref{Eqn:Computep3} holds for all basepoints in $B_{n}$ including those in the complement of the configuration space.

\medskip

\noindent{\textbf{Step 8.}} Assume that $u$ is tame. Then there exists a constant $C = C(u)$ such that
\[
\left\lvert \int_{0}^{T} (\Re u)(a_{t}.\z) \, \d t \right\rvert < C
\]
for all $\z \in \T^{n+1}$ and $T \in \R$. Hence we conclude from \eqref{Eqn:Computep3} that the solution $p$ is bounded. This proves part (iii) of the proposition.
\end{proof}

\subsection{The Frobenius problem}
\label{SubSec:FrobeniusProblem}

Consider the partial differential equation
\begin{equation} \label{eqn:FrobeniusProblem}
	\Q u = \psi
\end{equation}
with right-hand side $\psi \in A^{\infty}(\T^{n+1},\R)$. Our aim in this section is to explicitly construct a solution $u \in A^{\infty}_{\t}(\T^{n+1},\C_{1})$ of this equation. We will refer to \eqref{eqn:FrobeniusProblem} as the \emph{Frobenius problem}.

\begin{proposition} \label{prop:FrobeniusProblem}
	There exists a linear operator
\begin{equation} \label{map:SolutionOperatorFrobeniusProblem}
	\map{\Sop^{n}}{A^{\infty}(\T^{n+1},\R)}{A^{\infty}_{\t}(\T^{n+1},\C_{1})} \quad\quad (n \ge 1)
\end{equation}
which is a right inverse of the Frobenius operator $\Q^{n}$ in \eqref{map:OperatorQDownstairs}. More precisely, for every $n \ge 1$ and for every function $\psi \in A^{\infty}(\T^{n+1},\R)$ the following hold.
\begin{enumerate}[leftmargin=1cm,topsep=0.5ex,itemsep=0.5ex]
	\item The value of the function $\Sop \psi \in A^{\infty}_{\t}(\T^{n+1},\C_{1})$ at any point $(z_{0},\ldots,z_{n}) \in \T^{n+1}$ is given by the integral
\begin{equation} \label{eqn:FormulaForSpsi}
	(\Sop \psi)(z_{0},\ldots,z_{n}) = \i \cdot z_{0} \cdot \int_{0}^{\infty} \psi\bigl( 1,a_{t}.(z_{1}/z_{0}),\ldots,a_{t}.(z_{n}/z_{0}) \bigr) \cdot e^{-t} \, \d t.
\end{equation}
	\item The function $u \deq \Sop \psi$ is a solution of the Frobenius problem \eqref{eqn:FrobeniusProblem}. In particular, the Cauchy-Frobenius complex in \eqref{map:CauchyFrobeniusComplex} is exact at the fourth term $A^{\infty}(\T^{n+1},\R)$.
\end{enumerate}
\end{proposition}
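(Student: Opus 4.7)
The plan is to reduce to the $K$-fixed subspace via $K$-reduction, solve the resulting equation by integrating $\psi$ along the $A$-flow against the weight $e^{-t}$, and then $K$-extend with weight $1$. The weight $e^{-t}$ is precisely what produces the identity contribution in $\Q = \Im(\Id - \Lbar)$ upon differentiation along $A$, while the purely imaginary character of the $K$-reduction will supply tameness via the criterion in Lemma \ref{lemma:CriterionForTameness}.

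Concretely, I would denote by $\psi_{K} \in \Ss^{\b}_{P}(\T^{n},\R)$ the $K$-reduction of $\psi$, which lies in this space by Lemma \ref{lemma:ReductionAndExtensionByK}(v), and set
\[
V(\z) \deq \int_{0}^{\infty} \psi_{K}(a_{t}.\z) \cdot e^{-t} \, \d t \quad (\z \in \T^{n}), \qquad \Sop \psi \deq (\i V)^{K}_{1}.
\]
The $K$-extension formula \eqref{eqn:KExtension} then immediately recovers \eqref{eqn:FormulaForSpsi}. Dominated convergence with majorant $\lVert \L_{i_{1},\ldots,i_{\ell}} \psi_{K} \rVert_{\infty} \cdot e^{-t}$ legitimizes differentiation under the integral to all orders, giving $V \in \Ss^{\b}_{P}(\T^{n},\R)$. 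Lemma \ref{lemma:ReductionAndExtensionByK}(vii) then yields $\Sop \psi \in \Ss^{\b}_{G}(\T^{n+1},\C_{1})^{K}$, and since $\Re(\i V) = 0$, Lemma \ref{lemma:CriterionForTameness} guarantees tameness, so $\Sop \psi \in A^{\infty}_{\t}(\T^{n+1},\C_{1})$.

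To verify $\Q \Sop \psi = \psi$, both sides are $K$-invariant, so by Lemma \ref{lemma:ReductionAndExtensionByK}(ii) it suffices to check equality after $K$-reduction. Since $K$-reduction commutes with $\L_{A}$, $\L_{N}$ and with taking real or imaginary parts, formula \eqref{eqn:RealVersionActionOfQ} applied to $u \deq \Sop \psi$ with $u_{K} = \i V$ reduces the task to proving $V - \L_{A} V = \psi_{K}$. Using the one-parameter group law $a_{t} a_{s} = a_{t+s}$ and the substitution $\tau = t + s$ gives
\[
V(a_{s}.\z) = e^{s} \int_{s}^{\infty} \psi_{K}(a_{\tau}.\z) \, e^{-\tau} \, \d\tau,
\]
and differentiating at $s=0$ produces $(\L_{A} V)(\z) = V(\z) - \psi_{K}(\z)$, as desired. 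The main obstacle is keeping the differentiation-under-the-integral argument systematic for every iterated $P$-derivative of $V$, which is needed both for the bounded-derivative conclusion and for the final identity; this is routine but must be organized carefully.
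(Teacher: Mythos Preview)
Your proposal is correct and follows essentially the same route as the paper: define the solution by $K$-extending with weight $1$ the purely imaginary function $iV$ obtained by integrating $\psi_{K}$ along the $A$-flow against $e^{-t}$, use dominated convergence and the relation $a_{t}n_{s}=n_{e^{-t}s}a_{t}$ to get bounded $P$-derivatives, invoke Lemma~\ref{lemma:CriterionForTameness} for tameness, and verify $\Q u=\psi$ after $K$-reduction via $V-\L_{A}V=\psi_{K}$. The only cosmetic difference is that the paper derives the last identity by integration by parts rather than by your substitution $\tau=t+s$, which amounts to the same computation.
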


\begin{proof}
	Fix $n \ge 1$ and let $\psi \in A(\T^{n+1},\R)$. To prove the proposition, it will be sufficient to construct an explicit solution $u \in A^{\infty}_{\t}(\T^{n+1},\C_{1})$ of the Frobenius problem \eqref{eqn:FrobeniusProblem}.

\medskip

\noindent{\textbf{Step 1.}} Pick a representative $\psi \in \As^{\infty}(\T^{n+1},\R)$.

\medskip

\noindent{\textbf{Step 2.}} Observe that $\psi_{K} \in \Ls^{\infty}(\T^{n},\R)$ by Lemma \ref{lemma:ReductionAndExtensionByK}\,(iii). We define a measurable function $v \in \Ls^{0}(\T^{n},\C)$ by
\begin{equation} \label{eqn:DefinitionOfv}
	v(\z) \deq \i \cdot \int_{0}^{\infty} \psi_{K}(a_{s}.\z) \cdot e^{-s} \, \d s
\end{equation}
for all $\z \in \T^{n}$.

\medskip

\noindent{\textbf{Step 3.}} We prove that $v$ is a bounded function contained in $\Ss^{\b}_{P}(\T^{n},\C)$.

\medskip

We have seen in Step 2 that $\psi_{K}$ is bounded. It follows that
\begin{equation} \label{eqn:BoundednessOfv}
	\lvert v(\z) \rvert \le \lVert \psi_{K} \rVert_{\infty} \cdot \int_{0}^{\infty} e^{-s} \, \d s = \lVert \psi_{K} \rVert_{\infty}
\end{equation}
for every $\z \in \T^{n}$, which implies that $v$ is bounded. Next we observe that $\psi_{K} \in \Ss^{\b}_{P}(\T^{n},\R)$ by Lemma \ref{lemma:ReductionAndExtensionByK}\,(v). We are going to show that $v$ is $P$-orbitwise smooth with bounded $P$-derivatives. For $s \ge 0$ consider the function $f_{s} \in \Ls^{\infty}(\T^{n},\R)$ defined by
\[
f_{s}(\z) \deq \psi_{K}(a_{s}.\z).
\]
It is $P$-orbitwise smooth since the map
\begin{equation} \label{map:SmoothnessAndBoundednessOfv}
	P \to \R, \quad p \mapsto f_{s}(p.\z) = \psi_{K}(a_{s}\,p.\z)
\end{equation}
is smooth for every $\z \in \T^{n}$ because $a_{s}\,p \in P$ and $\psi_{K}$ is $P$-orbitwise smooth. Now for every $\z \in \T^{n}$ we compute
\[
\begin{aligned}
	(\L_{A} f_{s})(\z) &= \left. \dd{}{t} \right|_{t=0} f_{s}(a_{t}.\z) \, = \left. \dd{}{t} \right|_{t=0} \psi_{K}(a_{s}.(a_{t}.\z)) \\
	&= \left. \dd{}{t} \right|_{t=0} \psi_{K}(a_{t}.(a_{s}.\z)) \, = (\L_{A} \psi_{K})(a_{s}.\z)
\end{aligned}
\]
and, using the relation $a_{s}.n_{t} = n_{e^{-s} \cdot t}.a_{s}$ from \eqref{eqn:ANormalizesN},
\[
\begin{aligned}
	(\L_{N} f_{s})(\z) &= \left. \dd{}{t} \right|_{t=0} f_{s}(n_{t}.\z) \, = \left. \dd{}{t} \right|_{t=0} \psi_{K}(a_{s}.(n_{t}.\z)) \\
	&= \left. \dd{}{t} \right|_{t=0} \psi_{K}(n_{e^{-s} \cdot t}.(a_{s}.\z)) \, = e^{-s} \cdot (\L_{N} \psi_{K})(a_{s}.\z).
\end{aligned}
\]
Since $\psi_{K}$ has bounded $P$-derivatives and $s \ge 0$, we conclude that $\L_{A}f_{s}$ and $\L_{N}f_{s}$ are both bounded. Hence by an estimate as in \eqref{eqn:BoundednessOfv} above, by \eqref{eqn:DefinitionOfv} the Lebesgue dominated convergence theorem implies that the derivatives $\L_{A}v$ and $\L_{N}v$ exist and are bounded. Since $\psi_{K}$ has bounded $P$-derivatives, a similar argument involving the derivatives $\L_{i_{1},\ldots,i_{\ell}} f_{s}$ for all integers $\ell > 0$ and all $(i_{1},\ldots,i_{\ell}) \in \{A,N\}^{\ell}$ shows that the function $v$ has bounded $P$-derivatives.

\medskip

\noindent{\textbf{Step 4.}} We show that the function $v$ is a solution of the differential equation
\begin{equation} \label{eqn:ReducedFrobeniusProblem1}
	\Q v = \psi_{K},
\end{equation}
which is obtained from \eqref{eqn:FrobeniusProblem} by means of $K$-reduction.

\medskip

Recall that $v = \vsharp + \i \, \vflat$ denotes the decomposition of the complex function $v$ into its real and imaginary parts. We see from \eqref{eqn:DefinitionOfv} that $\vsharp = 0$, hence we obtain
\[
\Q v = \vflat - \L_{A} \vflat + \L_{N} \vsharp = \vflat - \L_{A} \vflat
\]
by \eqref{eqn:RealVersionActionOfQ}. Thus \eqref{eqn:ReducedFrobeniusProblem1} turns out to be equivalent to
\begin{equation} \label{eqn:ReducedFrobeniusProblem2}
	\vflat - \L_{A} \vflat = \psi_{K}.
\end{equation}
We know from Step 3 that the derivative $\L_{A} \vflat$ exists. Hence by the Lebesgue dominated convergence theorem, for every $\z \in \T^{n}$ we compute
\[
\begin{aligned}
	(\L_{A} \vflat) (\z) &= \left. \dd{}{t}\right|_{t=0} \vflat(a_{t}.\z) \,=\, \left. \dd{}{t}\right|_{t=0} \int_{0}^{\infty} \psi_{K}(a_{t}.(a_{s}.\z)) \cdot e^{-s} \, \d s \\
	&= \int_{0}^{\infty} \left( \left. \dd{}{t}\right|_{t=0} \psi_{K}(a_{t+s}.\z) \right) \cdot e^{-s} \, \d s \, = \, \int_{0}^{\infty} \left( \dd{}{s} \psi_{K}(a_{s}.\z) \right) \cdot e^{-s} \, \d s \\
	&= \left[ \psi_{K}(a_{s}.\z) \cdot e^{-s} \right]_{0}^{\infty} + \int_{0}^{\infty} \psi_{K}(a_{s}.\z) \cdot e^{-s} \, \d s \,=\, - \psi_{K}(\z) + \vflat(\z).
\end{aligned}
\]
Here the second last identity holds by integration by parts. Hence $\vflat$ is a solution of \eqref{eqn:ReducedFrobeniusProblem2}.

\medskip

\noindent{\textbf{Step 5.}} We prove that the $K$-extension $u = v^{K}_{1} \in \As^{\infty}(\T^{n+1},\C_{1})$ of the function $v$ with weight $1$ is a solution of \eqref{eqn:FrobeniusProblem}. Together with \eqref{eqn:DefinitionOfv} and \eqref{eqn:KExtension} this implies part (i) of the proposition.

\medskip

Applying Lemma \ref{lemma:ReductionAndExtensionByK}\,(iv, i) we deduce from \eqref{eqn:ReducedFrobeniusProblem1} that
\[
(\Q u)_{K} = \Q u_{K} = \Q v = \psi_{K}.
\]
By Proposition \ref{prop:CauchyFrobeniusSequence} we know that $\Q u, \psi \in \Ss_{G}(\T^{n+1},\R)^{K}$. Hence by Lemma \ref{lemma:ReductionAndExtensionByK}\,(ii) it follows that $\Q u = \psi$.

\medskip

\noindent{\textbf{Step 6.}} We see from \eqref{eqn:DefinitionOfv} that $\Re v = 0$. Hence by Lemma \ref{lemma:CriterionForTameness} the $K$-extension $u = v^{K}_{1}$ is tame and therefore defines a function $u \in A^{\infty}_{\t}(\T^{n+1},\C_{1})$. This proves part (ii) of the proposition.
\end{proof}

\section{Transgression}
\label{sec:Transgression}

\subsection{The transgression map}
\label{subsec:TransgressionMap}

Let us begin with the following basic observation.

\begin{lemma} \label{lemma:OperatorsLAndQIntertwine}
	The differential operators $\L^{n}$ and $\Q^{n}$ in \eqref{map:OperatorLDownstairs} and \eqref{map:OperatorQDownstairs} satisfy the relations
\[
\L^{n+1} \circ \de^{n} = \de^{n} \circ \L^{n} \quad \text{and} \quad \Q^{n+1} \circ \de^{n} = \de^{n} \circ \Q^{n}
\]
for every $n \ge 0$. They therefore define cochain maps
\[
\map{\L^{n}}{A(\T^{n+1},\R)}{A^{\infty}(\T^{n+1},\C_{1})} \quad\quad (n \ge 0)
\]
and
\[
\map{\Q^{n}}{A^{\infty}(\T^{n+1},\C_{1})}{A^{\infty}(\T^{n+1},\R)} \quad\quad (n \ge 0).
\]
\end{lemma}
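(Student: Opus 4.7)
The plan is to prove the two intertwining relations by a direct calculation on representatives, using that the coboundary $\de$ is built from $G$-equivariant face maps and that $\L_A, \L_N$ are fundamental vector fields for the diagonal $G$-action on $\T^{n+1}$.

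First I would fix $n \ge 0$ and pick a representative $f \in \As^\infty(\T^{n+1}, \C_\m)$ of a class in $A(\T^{n+1}, \R)$ or $A^\infty(\T^{n+1}, \C_1)$. The key observation is that each face map $\partial_j \colon \T^{n+2} \to \T^{n+1}$, $(z_0, \ldots, z_{n+1}) \mapsto (z_0, \ldots, \widehat{z_j}, \ldots, z_{n+1})$, is $G$-equivariant with respect to the diagonal boundary action. Since $\L_A$ acts by differentiating along the diagonal $A$-orbit, equivariance gives
\[
\L_A(f \circ \partial_j)(\z) = \left.\tfrac{\d}{\d t}\right|_{t=0} f(\partial_j(a_t.\z)) = \left.\tfrac{\d}{\d t}\right|_{t=0} f(a_t.\partial_j(\z)) = (\L_A f)(\partial_j(\z)),
\]
that is, $\L_A \circ \partial_j^* = \partial_j^* \circ \L_A$. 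Summing over $j$ with alternating signs as in \eqref{eqn:CoboundaryOperator} yields $\L_A \circ \de^n = \de^n \circ \L_A$, and an identical argument gives $\L_N \circ \de^n = \de^n \circ \L_N$. Taking the complex combination $\L = \L_A + \i \L_N$ from Definition \ref{def:OperatorL} produces the first relation $\L^{n+1} \circ \de^n = \de^n \circ \L^n$, and its conjugate yields $\Lbar^{n+1} \circ \de^n = \de^n \circ \Lbar^n$.

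For the second relation, the coboundary $\de$ is manifestly real-linear and commutes with the identity, so from $\Lbar \circ \de = \de \circ \Lbar$ I would deduce
\[
\Q \circ \de = \Im\bigl( (\Id - \Lbar) \circ \de \bigr) = \Im\bigl( \de \circ (\Id - \Lbar) \bigr) = \de \circ \Im(\Id - \Lbar) = \de \circ \Q,
\]
giving $\Q^{n+1} \circ \de^n = \de^n \circ \Q^n$.

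Finally, to conclude that these are cochain maps between the stated complexes, I would recall from the discussion in Section \ref{subsec:BoundaryModel} that $\de^\bullet$ restricts to the function spaces $A(\T^{\bullet+1}, \R)$ and $A^\infty(\T^{\bullet+1}, \C_1)$; combined with the fact that $\L^n$ and $\Q^n$ are well-defined as established in Proposition \ref{prop:CauchyFrobeniusSequence}, the intertwining identities just derived immediately upgrade them to cochain maps. I do not expect any serious obstacle here: the whole proof is essentially the observation that diagonal differential operators commute with alternating sums of face-omission maps, and the only mild care needed is to verify that the identities, derived pointwise on the configuration space $\T^{(n+2)}$, descend to the equivalence classes defining $A(\T^{n+2}, \R)$ and $A^\infty(\T^{n+2}, \C_1)$—which is automatic since $\T^{(n+2)} \subset \T^{n+2}$ is $G$-invariant and of full measure.
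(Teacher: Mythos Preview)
Your proof is correct and follows essentially the same approach as the paper. The paper simply cites an external lemma (Lemma~3.3 of \cite{Hartnick/Bounded-cohomology-via-partial-differential-equations-I}) stating that $\L_K$, $\L_A$, $\L_N$ intertwine with $\de$, and then invokes Definitions~\ref{def:OperatorL} and~\ref{def:OperatorQ}; you have inlined precisely the proof of that cited lemma via the $G$-equivariance of the face maps, which is entirely appropriate.
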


\begin{proof}
	By \cite[Lemma 3.3]{Hartnick/Bounded-cohomology-via-partial-differential-equations-I} the action of the differential operators $\L_{K}$, $\L_{A}$ and $\L_{N}$ on orbitwise smooth functions intertwines with the action of the homogeneous coboundary operator $\de$. Hence the claim follows from Definitions \ref{def:OperatorL} and \ref{def:OperatorQ}.
\end{proof}

By the lemma, the Cauchy-Frobenius complex in \eqref{map:CauchyFrobeniusComplex} gives rise to a double complex
\[
\begin{tikzcd}[column sep = scriptsize]
	& \vdots \arrow[d,rightarrow]
	& \vdots \arrow[d,rightarrow]
	& \vdots \arrow[d,rightarrow]
	& \vdots \arrow[d,rightarrow] \\
	0 \arrow[r,rightarrow]
	& A^{\infty}(\T^{n-1},\R)^{P} \arrow[r,"\iota^{n-2}",rightarrow] \arrow[d,"\de^{n-2}",rightarrow]
	& A^{\infty}(\T^{n-1},\R) \arrow[r,"\L^{n-2}",rightarrow] \arrow[d,"\de^{n-2}",rightarrow]
	& A^{\infty}_{\t}(\T^{n-1},\C_{1}) \arrow[r,"\Q^{n-2}",rightarrow] \arrow[d,"\de^{n-2}",rightarrow]
	& A^{\infty}(\T^{n-1},\R) \arrow[r,rightarrow] \arrow[d,"\de^{n-2}",rightarrow] 
	& 0 \\
	0 \arrow[r,rightarrow]
	& A^{\infty}(\T^{n},\R)^{P} \arrow[r,"\iota^{n-1}",rightarrow] \arrow[d,"\de^{n-1}",rightarrow]
	& A^{\infty}(\T^{n},\R) \arrow[r,"\L^{n-1}",rightarrow] \arrow[d,"\de^{n-1}",rightarrow]
	& A^{\infty}_{\t}(\T^{n},\C_{1}) \arrow[r,"\Q^{n-1}",rightarrow] \arrow[d,"\de^{n-1}",rightarrow]
	& A^{\infty}(\T^{n},\R) \arrow[r,rightarrow] \arrow[d,"\de^{n-1}",rightarrow]
	& 0 \\
	0 \arrow[r,rightarrow]
	& A^{\infty}(\T^{n+1},\R)^{P} \arrow[r,"\iota^{n}",rightarrow] \arrow[d,"\de^{n}",rightarrow]
	& A^{\infty}(\T^{n+1},\R) \arrow[r,"\L^{n}",rightarrow] \arrow[d,"\de^{n}",rightarrow]
	& A^{\infty}_{\t}(\T^{n+1},\C_{1}) \arrow[r,"\Q^{n}",rightarrow] \arrow[d,"\de^{n}",rightarrow]
	& A^{\infty}(\T^{n+1},\R) \arrow[r,rightarrow] \arrow[d,"\de^{n}",rightarrow]
	& 0 \\
	& \vdots & \vdots & \vdots & \vdots
\end{tikzcd}
\]
with commuting differentials. Abbreviating the vertical complexes by $\Ac^{\infty}_{P}$, $\Ac^{\infty}$ and $\Ac^{\infty}_{\t}$, where
\[
\begin{aligned}
	(\Ac^{\infty}_{P})^{n} &\deq ( A^{\infty}(\T^{n+1},\R)^{P},\de^{n} ), \\
	(\Ac^{\infty})^{n} &\deq ( A^{\infty}(\T^{n+1},\R),\de^{n} ), \\
	(\Ac^{\infty}_{\t})^{n} &\deq ( A^{\infty}_{\t}(\T^{n+1},\C_{1}),\de^{n} )
\end{aligned}
\]
for all $n \ge 0$, we may write this double complex more conveniently as
\begin{equation} \label{map:DoubleComplex}
	\begin{tikzcd}
		0 \arrow[r,rightarrow] & \Ac^{\infty}_{P} \arrow[r,"\iota",rightarrow]
		& \Ac^{\infty} \arrow[r,"\L",rightarrow]
		& \Ac^{\infty}_{\t} \arrow[r,"\Q",rightarrow]
		& \Ac^{\infty} \arrow[r,rightarrow] & 0.
	\end{tikzcd}
\end{equation}
Define a subcomplex $\Ec \subset \Ac^{\infty}_{\t}$ by
\[
\Ec^{n} \deq \ker \Q^{n} = \im \L^{n}
\]
for all $n \ge 0$, and denote by $\map{i^{n}}{\Ec^{n}}{(\Ac^{\infty}_{\t})^{n}}$ the canonical inclusion. The sequence in \eqref{map:DoubleComplex} then splits into the short sequences
\begin{subequations}
\begin{equation} \label{map:ShortExactSequenceL}
	\begin{tikzcd}
		0 \arrow[r,rightarrow] & (\Ac^{\infty}_{P})^{n} \arrow[r,"\iota^{n}",rightarrow]
		& (\Ac^{\infty})^{n} \arrow[r,"\L^{n}",rightarrow]
		& \Ec^{n} \arrow[r,rightarrow] & 0  \quad\quad (n \ge 0)
	\end{tikzcd}
\end{equation}
and
\begin{equation} \label{map:ShortExactSequenceQ}
	\begin{tikzcd}
		0 \arrow[r,rightarrow] & \Ec^{n} \arrow[r,"i^{n}",rightarrow]
		& (\Ac^{\infty}_{\t})^{n} \arrow[r,"\Q^{n}",rightarrow]
		& (\Ac^{\infty})^{n} \arrow[r,rightarrow] & 0 \quad\quad (n \ge 0).
	\end{tikzcd}
\end{equation}
\end{subequations}
From the exactness properties of the sequence in \eqref{map:DoubleComplex} we then obtain long exact sequences in cohomology, as follows.

\begin{lemma} \label{lemma:LongExactSequences}
	There are long exact sequences
\begin{subequations}
\begin{equation} \label{map:LongExactSequenceL}
	\begin{tikzcd}[column sep = scriptsize]
		H^{2}(\Ac_{P}^{\infty}) \arrow[r,"\iota^{\ast}"] & H^{2}(\Ac^{\infty}) \arrow[r,"\L^{\ast}"] & H^{2}(\Ec) \arrow[r,"\Phi_{\L}^{2}"] & H^{3}(\Ac_{P}^{\infty}) \arrow[r,"\iota^{\ast}"] & \cdots \hspace{1.4cm} \\
		\hspace{5mm} \cdots \arrow[r,"\iota^{\ast}"] & H^{n}(\Ac^{\infty}) \arrow[r,"\L^{\ast}"] & H^{n}(\Ec) \arrow[r,"\Phi_{\L}^{n}"] & H^{n+1}(\Ac_{P}^{\infty}) \arrow[r,"\iota^{\ast}"] & H^{n+1}(\Ac^{\infty}) \arrow[r] & \cdots
	\end{tikzcd}
\end{equation}
and
\begin{equation} \label{map:LongExactSequenceQ}
	\begin{tikzcd}[column sep = scriptsize]
		H^{1}(\Ec) \arrow[r,"i^{\ast}"] & H^{1}(\Ac_{\t}^{\infty}) \arrow[r,"\Q^{\ast}"] & H^{1}(\Ac^{\infty}) \arrow[r,"\Phi_{\Q}^{1}"] & H^{2}(\Ec) \arrow[r,"i^{\ast}"] & \cdots \hspace{1.5cm} \\
		\hspace{5mm} \cdots \arrow[r,"i^{\ast}"] & H^{n}(\Ac_{\t}^{\infty}) \arrow[r,"\Q^{\ast}"] & H^{n}(\Ac^{\infty}) \arrow[r,"\Phi_{\Q}^{n}"] & H^{n+1}(\Ec) \arrow[r,"i^{\ast}"] & H^{n+1}(\Ac_{\t}^{\infty}) \arrow[r,] & \cdots
	\end{tikzcd}
\end{equation}
\end{subequations}
with connecting homomorphisms
\begin{subequations}
\begin{equation} \label{map:ConnectingHomomorphismL}
	\map{\Phi_{\L}^{n}}{H^{n}(\Ec)}{H^{n+1}(\Ac_{P}^{\infty})} \quad\quad (n \ge 2)
\end{equation}
and
\begin{equation} \label{map:ConnectingHomomorphismQ}
	\map{\Phi_{\Q}^{n}}{H^{n}(\Ac^{\infty})}{H^{n+1}(\Ec)} \quad\quad (n \ge 1).
\end{equation}
\end{subequations}
\end{lemma}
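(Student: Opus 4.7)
The proof is a direct application of the zig-zag lemma of homological algebra, once I verify that \eqref{map:ShortExactSequenceL} and \eqref{map:ShortExactSequenceQ} are genuine short exact sequences of cochain complexes with respect to the horizontal differential $\de$. The plan breaks into three steps.

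First, I would check that all four maps $\iota^{n}$, $\L^{n}$, $i^{n}$, $\Q^{n}$ are cochain maps. For the canonical inclusions $\iota^{n}$ and $i^{n}$ this is immediate, and for $\L^{n}$ and $\Q^{n}$ it is precisely Lemma \ref{lemma:OperatorsLAndQIntertwine}. It follows, in particular, that $\Ec^{n} = \ker \Q^{n}$ assembles into a subcomplex $\Ec \subset \Ac^{\infty}_{\t}$ with the restricted differential $\de$.

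Second, I would verify exactness of \eqref{map:ShortExactSequenceL} and \eqref{map:ShortExactSequenceQ} degree by degree. For \eqref{map:ShortExactSequenceL} at range $n \ge 2$: injectivity of $\iota^{n}$ is tautological, surjectivity of $\L^{n}$ onto $\Ec^{n} = \im \L^{n}$ holds by the very definition of $\Ec^{n}$ combined with Proposition \ref{prop:ExactnessOfCauchyFrobeniusComplex} (which yields $\im \L^{n} = \ker \Q^{n}$ for $n \ge 2$), and the identity $\ker \L^{n} = \im \iota^{n}$ is exactly Proposition \ref{prop:InfinitesimalPInvariance}. For \eqref{map:ShortExactSequenceQ} at range $n \ge 1$: injectivity of $i^{n}$ and the equality $\ker \Q^{n} = \im i^{n}$ are both immediate from the definition $\Ec^{n} \deq \ker \Q^{n}$, while surjectivity of $\Q^{n}$ is exactness at the last term of the Cauchy-Frobenius complex and is provided by Proposition \ref{prop:FrobeniusProblem} via the explicit right inverse $\Sop^{n}$.

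Having established that \eqref{map:ShortExactSequenceL} is short exact for every $n \ge 2$ and that \eqref{map:ShortExactSequenceQ} is short exact for every $n \ge 1$, I would invoke the standard zig-zag construction to produce the long exact sequences \eqref{map:LongExactSequenceL} and \eqref{map:LongExactSequenceQ}, with connecting homomorphisms $\Phi_{\L}^{n}$ and $\Phi_{\Q}^{n}$ defined in the usual way: given a cohomology class on the right-hand term, lift a representative through the surjective cochain map, apply $\de$, and pull back through the injective cochain map to obtain a cocycle in the left-hand complex. The indexing ranges in \eqref{map:ConnectingHomomorphismL} and \eqref{map:ConnectingHomomorphismQ} reflect precisely the domains of validity of Propositions \ref{prop:CauchyProblem} and \ref{prop:FrobeniusProblem}, respectively.

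There is no substantive obstacle in this lemma; all of the genuine analytic work has already been carried out in Section \ref{sec:CauchyFrobeniusComplex}, and the remaining content is purely formal homological algebra. The only point that warrants care is bookkeeping of the starting degrees, since the failure of Proposition \ref{prop:CauchyProblem} in degree $n = 1$ prevents \eqref{map:LongExactSequenceL} from extending below degree $2$.
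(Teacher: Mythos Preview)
Your proposal is correct and follows exactly the same approach as the paper: both reduce the lemma to the standard zig-zag construction after invoking the exactness of the Cauchy--Frobenius complex established in Section~\ref{sec:CauchyFrobeniusComplex}. The paper's proof is simply a terser version of yours, citing Proposition~\ref{prop:ExactnessOfCauchyFrobeniusComplex} once for each sequence rather than unpacking the individual exactness checks as you do.
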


\begin{proof}
	By Proposition \ref{prop:ExactnessOfCauchyFrobeniusComplex} the sequence in \eqref{map:ShortExactSequenceL} is exact for every $n \ge 2$, and hence gives rise to the long exact sequence in \eqref{map:LongExactSequenceL}. Likewise, by Proposition \ref{prop:ExactnessOfCauchyFrobeniusComplex} the sequence in \eqref{map:ShortExactSequenceQ} is exact for every $n \ge 1$ and thus gives rise to the long exact sequence in \eqref{map:LongExactSequenceQ}.
\end{proof}

We denote the composition of the homomorphism in \eqref{map:EquivariantLiftingOnCohomology} with the isomorphism in \eqref{map:BoundaryModel} by
\begin{equation} \label{map:HomomorphismEquivariantLift}
	\map{\Pi^{n}}{H^{n}(\Ac_{P}^{\infty})}{H^{n}_{\cb}(G;\R)} \quad\quad (n \ge 0).
\end{equation}
This map is surjective by Proposition \ref{prop:EquivariantLifting}. It will be called the \emph{lifting homomorphism}.

\begin{definition}
	The concatenation of the lifting homomorphism in \eqref{map:HomomorphismEquivariantLift} with the connecting homomorphisms in \eqref{map:ConnectingHomomorphismL} and \eqref{map:ConnectingHomomorphismQ} defines the \emph{transgression map}
\begin{equation} \label{map:TransgressionMap}
	\map{\Lambda^{n} \deq \Pi^{n} \circ \Phi_{\L}^{n-1} \circ \Phi_{\Q}^{n-2}}{H^{n-2}(\Ac^{\infty})}{H^{n}_{\cb}(G;\R}) \quad\quad (n > 2).
\end{equation}
\end{definition}

Notice that the transgression map is defined for all degrees $n>2$, and that it shifts the degree by $2$.

\subsection{Transgressive classes}
\label{subsec:TransgressiveClasses}

We characterize classes in the image of the transgression map.

\begin{definition}
	A class $\alpha \in H^{n}_{\cb}(G;\R)$ of degree $n>2$ is called \emph{transgressive} if it is contained in the image of the transgression map $\Lambda^{n}$ in \eqref{map:TransgressionMap}.
\end{definition}

\begin{proposition} \label{prop:VanishingForTransgressiveClasses}
	Let $\alpha \in H^{n}_{\cb}(G;\R)$ with $n>2$. If $\alpha$ is transgressive, then $\alpha = 0$.
\end{proposition}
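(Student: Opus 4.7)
The plan is to exploit the fact that the transgression map in \eqref{map:TransgressionMap} factors through the cohomology group $H^{n-2}(\Ac^{\infty})$ and then to invoke the acyclicity of $\Ac^{\infty}$ already established in Section~\ref{subsec:CochainContractions}.

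First I would unwind the definition of transgressive: since $\alpha$ is transgressive, there exists a class $\delta \in H^{n-2}(\Ac^{\infty})$ with $\alpha = \Lambda^{n}(\delta) = \Pi^{n}\bigl( \Phi_{\L}^{n-1}( \Phi_{\Q}^{n-2}(\delta) ) \bigr)$. The entire domain of $\Lambda^{n}$ is the group $H^{n-2}(\Ac^{\infty})$, so it suffices to show that this group is trivial under the standing assumption $n>2$.

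Next I would apply Corollary~\ref{cor:VanishingForA} with weight $\m = 0$. That corollary asserts acyclicity of the complex $(A^{\infty}(\T^{\bullet+1},\C_{\m}),\de^{\bullet})$ in all positive degrees, via the explicit cochain contraction $\I$ from Proposition~\ref{prop:ComplexAIsAcyclic} given by integration against $\mu_{K}$. To pass from $\C_{0}$-coefficients to real coefficients, I would observe that $\Ac^{\infty} = (A^{\infty}(\T^{\bullet+1},\R),\de^{\bullet})$ is the real-valued subcomplex of $(A^{\infty}(\T^{\bullet+1},\C_{0}),\de^{\bullet})$, and that $\I$ preserves real-valuedness because $\mu_{K}$ is a real positive measure; hence $\I$ restricts to a cochain contraction on $\Ac^{\infty}$ and forces $H^{k}(\Ac^{\infty}) = 0$ for every $k>0$. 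Since $n-2 \ge 1$, this gives $\delta = 0$ and therefore $\alpha = \Lambda^{n}(0) = 0$.

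There is essentially no obstacle: the whole content of Proposition~\ref{prop:VanishingForTransgressiveClasses} is already packaged in Corollary~\ref{cor:VanishingForA}, and the role of this proposition is merely to record the vanishing in the degree-shifted form that will subsequently be combined with the transgressivity criterion for strongly reducible classes in order to deduce Theorem~\ref{thm:VanishingForStronglyReducibleClasses}.
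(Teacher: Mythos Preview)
Your proposal is correct and follows exactly the paper's approach: the paper's proof is the one-line observation that $H^{n-2}(\Ac^{\infty}) = 0$ by Corollary~\ref{cor:VanishingForA} for every $n>2$. Your additional remark that the cochain contraction $\I$ preserves real-valuedness (so that the acyclicity of $(A^{\infty}(\T^{\bullet+1},\C_{0}),\de^{\bullet})$ passes to the real subcomplex $\Ac^{\infty}$) is a legitimate detail that the paper leaves implicit.
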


\begin{proof}
	This is immediate since $H^{n-2}(\Ac^{\infty}) = 0$ by Corollary \ref{cor:VanishingForA} for every $n>2$.
\end{proof}

We next derive a useful criterion that helps to decide whether a given bounded cohomology class is transgressive. To this end, we first recall the vanishing $H^{n}(\Ac^{\infty}) = 0$ which holds for every $n>0$ by Corollary \ref{cor:VanishingForA}. Exactness of the long sequence in \eqref{map:LongExactSequenceL} therefore implies that the connecting homomorphism $\Phi_{\L}^{n-1}$ in \eqref{map:ConnectingHomomorphismL} is in fact an isomorphism for every $n>2$. Consider now the diagram
\[
\begin{tikzcd}[column sep = large, row sep = large]
	H^{n-1}(\Ec) \arrow[d,"i^{\ast}",rightarrow] \arrow[r,"\Phi_{\L}^{n-1}",rightarrow]  & H^{n}(\Ac_{P}^{\infty}) \arrow[r,"\Pi^{n}",rightarrow] & H^{n}_{\cb}(G;\R) \\
	H^{n-1}(\Ac_{\t}^{\infty})
\end{tikzcd}
\]
It gives rise to the following cohomological characterization of transgressive classes.

\begin{proposition} \label{prop:CriterionForTransgressive}
	A class $\alpha \in H^{n}_{\cb}(G;\R)$ with $n>2$ is transgressive if and only if there exists a class $\beta \in H^{n}(\Ac_{P}^{\infty})$ such that
\begin{equation} \label{eqn:CriterionForTransgressive}
	i^{\ast} \circ (\Phi_{\L}^{n-1})^{-1} \, \beta = 0 \quad \text{and} \quad \Pi^{n} \, \beta = \alpha.
\end{equation}
\end{proposition}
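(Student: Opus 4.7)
The plan is to deduce the claim by a pure diagram chase from the two long exact sequences of Lemma~\ref{lemma:LongExactSequences} together with the acyclicity of $\Ac^{\infty}$ recorded in Corollary~\ref{cor:VanishingForA}. The main preparatory observation, already noted in the text right before the proposition, is that the connecting homomorphism $\Phi_{\L}^{n-1}$ is an isomorphism for every $n>2$: in the long exact sequence \eqref{map:LongExactSequenceL} the neighbouring groups $H^{n-1}(\Ac^{\infty})$ and $H^{n}(\Ac^{\infty})$ both vanish (since $n-1,n\ge 2 > 0$), so $\Phi_{\L}^{n-1}$ is sandwiched between zero maps and hence is both injective and surjective. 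In particular the expression $(\Phi_{\L}^{n-1})^{-1}\beta$ appearing in \eqref{eqn:CriterionForTransgressive} is well-defined.

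For the forward implication, assume $\alpha = \Lambda^{n}(\gamma)$ for some $\gamma \in H^{n-2}(\Ac^{\infty})$, and set $\beta \deq \Phi_{\L}^{n-1}(\Phi_{\Q}^{n-2}(\gamma)) \in H^{n}(\Ac_{P}^{\infty})$. Then $\Pi^{n}\beta = \alpha$ follows directly from the definition $\Lambda^{n} = \Pi^{n}\circ\Phi_{\L}^{n-1}\circ\Phi_{\Q}^{n-2}$ in \eqref{map:TransgressionMap}. Moreover, $(\Phi_{\L}^{n-1})^{-1}\beta = \Phi_{\Q}^{n-2}(\gamma)$, so
\[
i^{\ast}\circ(\Phi_{\L}^{n-1})^{-1}\beta \,=\, (i^{\ast}\circ\Phi_{\Q}^{n-2})(\gamma) \,=\, 0
\]
by exactness of the sequence \eqref{map:LongExactSequenceQ} at the spot $H^{n-1}(\Ec)$, since $i^{\ast}$ and $\Phi_{\Q}^{n-2}$ are consecutive maps there.

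For the reverse implication, given $\beta \in H^{n}(\Ac_{P}^{\infty})$ satisfying \eqref{eqn:CriterionForTransgressive}, put $\epsilon \deq (\Phi_{\L}^{n-1})^{-1}\beta \in H^{n-1}(\Ec)$. The hypothesis $i^{\ast}\epsilon = 0$ combined with exactness of \eqref{map:LongExactSequenceQ} at $H^{n-1}(\Ec)$ produces a class $\gamma \in H^{n-2}(\Ac^{\infty})$ with $\Phi_{\Q}^{n-2}(\gamma) = \epsilon$. Applying $\Pi^{n}\circ\Phi_{\L}^{n-1}$ to both sides yields
\[
\Lambda^{n}(\gamma) \,=\, \Pi^{n}\bigl(\Phi_{\L}^{n-1}(\epsilon)\bigr) \,=\, \Pi^{n}\beta \,=\, \alpha,
\]
which exhibits $\alpha$ as a transgressive class.

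There is no real obstacle here beyond keeping the bookkeeping of the two exact sequences straight; all of the content has been distilled in advance into Lemma~\ref{lemma:LongExactSequences} and Corollary~\ref{cor:VanishingForA}, so the proposition becomes a formal consequence once one verifies that $\Phi_{\L}^{n-1}$ is invertible in the stated range of degrees.
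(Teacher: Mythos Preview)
Your proof is correct and follows essentially the same route as the paper's: both directions are diagram chases using that $\Phi_{\L}^{n-1}$ is an isomorphism together with exactness of \eqref{map:LongExactSequenceQ} at $H^{n-1}(\Ec)$. The only cosmetic difference is in the forward implication: the paper observes that a transgressive $\alpha$ vanishes by Proposition~\ref{prop:VanishingForTransgressiveClasses} and then simply takes $\beta=0$, whereas you verify the criterion directly from the definition of $\Lambda^{n}$ and the relation $i^{\ast}\circ\Phi_{\Q}^{n-2}=0$; your version is marginally more self-contained since it does not invoke the vanishing of $H^{n-2}(\Ac^{\infty})$ at that step.
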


\begin{proof}
	Fix $n>2$, and consider a class $\alpha \in H^{n}_{\cb}(G;\R)$. If $\alpha$ is transgressive, then $\alpha=0$ by Proposition \ref{prop:VanishingForTransgressiveClasses} above and hence the class $\beta=0$ satisfies the conditions in \eqref{eqn:CriterionForTransgressive}. For the converse, assume that there exists $\beta \in H^{n}(\Ac_{P}^{\infty})$ such that \eqref{eqn:CriterionForTransgressive} holds. Since $\Phi_{\L}^{n-1}$ is an isomorphism, there is $\nu \in H^{n-1}(\Ec)$ such that $\Phi_{\L}^{n-1} \nu = \beta$ and $i^{\ast} \nu = 0$. Hence exactness of the long sequence in \eqref{map:LongExactSequenceQ} implies that there exists $\omega \in H^{n-2}(\Ac^{\infty})$ such that $\Phi_{\Q}^{n} \, \omega = \nu$. It follows that $\Lambda^{n-2} \, \omega = \alpha$ and hence $\alpha$ is transgressive.
\end{proof}

\subsection{Reducible classes}
\label{subsec:ReducibleClasses}

Recall from Section \ref{subsec:ContinuousBoundedCohomology} that the continuous bounded cohomology of $G$ is endowed with a natural cup product
\begin{equation} \label{map:CupProductOnCohomOfG}
	\map{\smallsmile}{H^{n}_{\cb}(G;\R) \otimes H^{m}_{\cb}(G;\R)}{H^{n+m}_{\cb}(G;\R)}.
\end{equation}
We may define a similar cup product on the cohomology of the complex $\Ac_{P}^{\infty}$, as follows. Consider first the cup product
\[
\map{\cup}{\Ls^{0}(\T^{n+1},\C) \otimes \Ls^{0}(\T^{m+1},\C)}{\Ls^{0}(\T^{n+m+1},\C)} \quad\quad (n,m \ge 0)
\]
on the homogeneous bar complex \eqref{map:MeasurableCochainComplex} of measurable cochains, which for $f \in \Ls^{0}(\T^{n+1},\C)$ and $g \in \Ls^{0}(\T^{m+1},\C)$ is defined by
\begin{equation} \label{eqn:DefinitionOfCupProduct}
	(f \cup g)(z_{0},\ldots,z_{n+m}) \deq f(z_{0},\ldots,z_{n}) \cdot g(z_{n},z_{n+1},\ldots,z_{n+m}).
\end{equation}
It gives rise to cup products
\begin{equation} \label{map:CupProductOnAinftyReal}
	\map{\cup}{A^{\infty}(\T^{n+1},\R)^{P} \otimes A^{\infty}(\T^{m+1},\R)^{P}}{A^{\infty}(\T^{n+m+1},\R)^{P}} \quad\quad (n,m \ge 0)
\end{equation}
and
\begin{equation} \label{map:CupProductOnAinftyComplex}
	\map{\cup}{A^{\infty}(\T^{n+1},\C_{\m}) \otimes A^{\infty}(\T^{m+1},\R)}{A^{\infty}(\T^{n+m+1},\C_{\m})} \quad\quad (n,m \ge 0)
\end{equation}
for every $\m \in \Z$. The former product in \eqref{map:CupProductOnAinftyReal} then induces a corresponding cup product
\begin{equation} \label{map:CupProductOnCohomOfA^P}
	\map{\cup}{H^{n}(\Ac_{P}^{\infty}) \otimes H^{m}(\Ac_{P}^{\infty})}{H^{n+m}(\Ac_{P}^{\infty})}
\end{equation}
on the cohomology of the complex $\Ac_{P}^{\infty}$ \cite[Sec.\,2]{Bucher-Karlsson/The-simplicial-volume-of-closed-manifolds-covered-by-Bbb-Hsp-2timesBbb-Hsp-2}.

\begin{lemma} \label{lemma:CupProductIntertwinesWithLifting}
	The cup products $\cup$ and $\smallsmile$ in \eqref{map:CupProductOnCohomOfA^P} and \eqref{map:CupProductOnCohomOfG} intertwine with the lifting homomorphism $\Pi^{n}$ in \eqref{map:HomomorphismEquivariantLift}.
\end{lemma}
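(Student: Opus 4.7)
The plan is to factor the lifting homomorphism $\Pi^{n}$ through its two constituent pieces and verify compatibility with cup products on each level separately. Recall from the proof of Proposition \ref{prop:EquivariantLifting} that $\Pi^{n}$ is induced by the cochain map
\[
A^{\infty}(\T^{n+1},\R)^{P} \to L^{\infty}(\T^{n+1},\R)^{G}
\]
in \eqref{map:EquivariantLiftingFromA^P}, followed by the boundary isomorphism \eqref{map:BoundaryModel}. It therefore suffices to show that each of these two steps intertwines the relevant cup products.

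First I would verify intertwining at the level of the cochain map \eqref{map:EquivariantLiftingFromA^P}. This is a direct unwinding of definitions: the cup product on $A^{\infty}(\T^{\bullet+1},\R)^{P}$ is the restriction of the cup product \eqref{eqn:DefinitionOfCupProduct} on measurable cochains, and the same formula \eqref{eqn:DefinitionOfCupProduct} defines the cup product on $L^{\infty}(\T^{\bullet+1},\R)^{G}$ (which in turn induces the cohomological cup product under \eqref{map:BoundaryModel}). Since \eqref{map:EquivariantLiftingFromA^P} sends a representative $f \in \As^{\infty}(\T^{n+1},\R)$, which is bounded and $G$-invariant on the configuration space $\T^{(n+1)}$, to its class in $L^{\infty}(\T^{n+1},\R)^{G}$, the formula \eqref{eqn:DefinitionOfCupProduct} for representatives is preserved verbatim. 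Hence this cochain map is a map of differential graded algebras.

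Second, I would appeal to the fact, already recorded immediately after \eqref{map:BoundaryModel} in Section \ref{subsec:BoundaryModel}, that the boundary isomorphism $H^{n}_{\cb}(G;\R) \cong H^{n}(L^{\infty}(\T^{\bullet+1},\R)^{G},\de^{\bullet})$ is multiplicative, i.e.\ identifies $\smallsmile$ on continuous bounded cohomology with the cup product induced by \eqref{eqn:DefinitionOfCupProduct} on the boundary cochain complex. Combining the two steps, the composition $\Pi^{n}$ intertwines $\cup$ with $\smallsmile$, which is the content of the lemma.

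The only potentially non-routine point is that cup products are a priori defined on cohomology classes, whereas the argument above proceeds at the cochain level. This is harmless: both cup products $\cup$ in \eqref{map:CupProductOnAinftyReal} and the one on $L^{\infty}(\T^{\bullet+1},\R)^{G}$ are genuine cochain-level products descending to cohomology, and the cochain map \eqref{map:EquivariantLiftingFromA^P} commutes strictly with them, so passage to cohomology preserves the intertwining property.
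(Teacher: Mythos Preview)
Your proposal is correct and follows essentially the same approach as the paper: both factor $\Pi^{n}$ through the cochain map \eqref{map:EquivariantLiftingFromA^P} and the boundary isomorphism \eqref{map:BoundaryModel}, invoke naturality of the former with respect to the cup product formula \eqref{eqn:DefinitionOfCupProduct}, and appeal to the multiplicativity of the latter. The paper's proof is simply a terser statement of the same two ingredients.
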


\begin{proof}
	This follows from the naturality of the cochain map in \eqref{map:EquivariantLiftingFromA^P}, in combination with the fact that the isomorphism in \eqref{map:BoundaryModel} is compatible with the ring structure on cohomology determined by the cup products $\smallsmile$ and $\cup$ \cite[Thm.\,7.5.3]{Monod/Continuous-bounded-cohomology-of-locally-compact-groups}.
\end{proof}

\begin{lemma} \label{lemma:OperatorsActingOnCupProduct}
	Fix an integer $\m \in \Z$, let $n \ge 1 $ and $m \ge 0$, and let $f \in A^{\infty}(\T^{n+1},\C_{\m})$ and $g \in A^{\infty}(\T^{m+1},\R)$. Then the cup product in \eqref{map:CupProductOnAinftyComplex} has the following properties.
\begin{enumerate}[leftmargin=1cm,topsep=0.5ex,itemsep=0.5ex]
	\item $\L^{n+m} (f \cup g) = (\L^{n} f) \cup g + f \cup (\L^{m} g)$;
	\item $\I^{n+m} (f \cup g) = (\I^{n} f) \cup g$;
	\item $(f \cup g)_{K} = f_{K} \cup g$.
\end{enumerate}
\end{lemma}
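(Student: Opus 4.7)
The plan is to verify each of the three identities by direct pointwise computation, so it is enough to check them for representatives on the configuration space $\T^{(n+m+1)}$ and on $\T^{(n+m)}$, respectively. For (iii), one evaluates both sides at $(z_{1},\ldots,z_{n+m})$: by the definitions of $K$-reduction in \eqref{eqn:KReduction} and of cup product in \eqref{eqn:DefinitionOfCupProduct},
\[
(f \cup g)_{K}(z_{1},\ldots,z_{n+m}) = f(1,z_{1},\ldots,z_{n}) \cdot g(z_{n},z_{n+1},\ldots,z_{n+m}) = (f_{K} \cup g)(z_{1},\ldots,z_{n+m}),
\]
which is the claim.

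For (ii), I would apply Fubini-type reasoning to the defining integral \eqref{eqn:DefinitionOfOperatorI}. Since $n \ge 1$, the integration variable $z$ appears only in the first factor of
\[
(f \cup g)(z,z_{0},\ldots,z_{n+m-1}) = f(z,z_{0},\ldots,z_{n-1}) \cdot g(z_{n-1},z_{n},\ldots,z_{n+m-1}),
\]
so pulling the $\mu_{K}$-integral inside the product gives $(\I^{n} f)(z_{0},\ldots,z_{n-1}) \cdot g(z_{n-1},\ldots,z_{n+m-1})$, which is exactly $((\I^{n} f) \cup g)(z_{0},\ldots,z_{n+m-1})$. The hypothesis $n \ge 1$ is essential here because it ensures that the integration variable does not enter the $g$-factor.

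For (i), since $\L = \L_{A} + \i\L_{N}$ by Definition \ref{def:OperatorL}, it suffices to establish the Leibniz rule separately for the real fundamental vector fields $\L_{A}$ and $\L_{N}$. I would work in angular coordinates $\thul \in \R^{n+m+1}$ and use the explicit formulas \eqref{eqn:FundamentalVectorFields}. Write $(f \cup g)(z_{0},\ldots,z_{n+m}) = f(z_{0},\ldots,z_{n}) \cdot g(z_{n},\ldots,z_{n+m})$, so that $f$ depends only on $\th_{0},\ldots,\th_{n}$ and $g$ depends only on $\th_{n},\ldots,\th_{n+m}$, with the single coordinate $\th_{n}$ shared. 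Applying $\L_{A} = \sum_{j=0}^{n+m} \sin(\th_{j}) \pd{}{\th_{j}}$ term by term, the summands for $0 \le j < n$ differentiate only $f$, those for $n < j \le n+m$ differentiate only $g$, and the summand $j = n$ splits by the ordinary product rule into a contribution for $f$ and one for $g$. Reassembling the contributions for $0 \le j \le n$ into $(\L_{A} f) \cup g$ and those for $n \le j \le n+m$ into $f \cup (\L_{A} g)$ yields the Leibniz identity for $\L_{A}$, and the identical argument applies to $\L_{N}$; combining gives (i).

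Since each identity reduces to a short pointwise computation from the relevant definitions, I do not anticipate a genuine obstacle. The only mild subtlety is the bookkeeping at the shared coordinate $\th_{n}$ in the proof of (i), where the product rule is precisely what makes the summand $j = n$ split symmetrically into the two factors of the Leibniz expression.
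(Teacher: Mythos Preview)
Your arguments for (ii) and (iii) are correct and match the paper's proof, which simply cites the definitions \eqref{eqn:DefinitionOfOperatorI} and \eqref{eqn:KReduction}.

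For (i), your conclusion is right but the argument has a technical gap. You propose to apply $\L_{A} = \sum_{j} \sin(\th_{j})\,\partial/\partial\th_{j}$ ``term by term'', splitting off the summands $0 \le j \le n$ from those $n \le j \le n+m$. This presumes that the individual coordinate partials $\partial f/\partial\th_{j}$ exist, but elements of $A^{\infty}(\T^{\bullet+1},\C_{\m})$ are only \emph{$G$-orbitwise} smooth in the sense of Section~\ref{subsec:OrbitwiseSmoothFunctions}: smoothness is guaranteed only for the map $g \mapsto f(g.\z)$, not for $f$ as a function on $\T^{\bullet+1}$, so the separate coordinate derivatives need not exist. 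The paper's invocation of ``the product rule'' is to be read at the level of the $1$-parameter group: writing $\z' = (z_{0},\ldots,z_{n})$ and $\z'' = (z_{n},\ldots,z_{n+m})$, the map $t \mapsto f(a_{t}.\z')\,g(a_{t}.\z'')$ is smooth since each factor is, and the ordinary product rule in $t$ at $t=0$ gives
\[
\L_{A}(f \cup g)(\z) = (\L_{A} f)(\z')\,g(\z'') + f(\z')\,(\L_{A} g)(\z'') = \bigl((\L_{A} f)\cup g + f \cup (\L_{A} g)\bigr)(\z),
\]
and likewise for $\L_{N}$. This sidesteps any appeal to coordinate partials and is the intended reading.
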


\begin{proof}
	Let $f \in A^{\infty}(\T^{n+1},\C_{\m})$ and $g \in A^{\infty}(\T^{m+1},\R)$ with $n \ge 1 $ and $m \ge 0$, and consider the formula for the product $f \cup g$ in \eqref{eqn:DefinitionOfCupProduct}. We see from Definition \ref{def:OperatorL} that (i) is a consequence of the product rule for differentiable functions, while (ii) is immediate from \eqref{eqn:DefinitionOfOperatorI}. The formula in (iii) follows from \eqref{eqn:KReduction}.
\end{proof}

\begin{definition}
	A bounded cohomology class $\alpha \in H^{n}_{\cb}(G;\R)$ of degree $n>2$ is called \emph{strongly reducible} if it admits a product decomposition
\begin{equation} \label{eqn:ProductDecomposition}
	\alpha = \alpha^{\prime} \smallsmile \alpha^{\prime\prime}
\end{equation}
with factors $\alpha^{\prime} \in H^{2}_{\cb}(G;\R)$ and $\alpha^{\prime\prime} \in H^{n-2}_{\cb}(G;\R)$.
\end{definition}

Let us make this definition more concrete. To this end, we recall from Section \ref{subsec:ContinuousBoundedCohomology} that the second bounded cohomology $H^{2}_{\cb}(G;\R) \cong \R$ is generated by the bounded K\"ahler class $\kappa \in H^{2}_{\cb}(G;\R)$. Hence the first factor $\alpha^{\prime}$ in the product decomposition in \eqref{eqn:ProductDecomposition} is in fact a real multiple of $\kappa$. For our purposes in this section, we will further need to know that under the isomorphism in \eqref{map:BoundaryModel}, the bounded K\"ahler class $\kappa \in H^{2}_{\cb}(G;\R)$ is identified with the cohomology class of the \emph{orientation cocycle} $\Or \in L^{\infty}(\T^{3},\R)^{G}$ \cite[Sec.\,2.3]{BurgerIozzi/A-useful-formula-from-bounded-cohomology}. This latter cocycle is defined by
\begin{equation} \label{eqn:OrientationCocycle}
	\Or(z_{0},z_{1},z_{2}) \deq
	\begin{cases}
		1 & \text{if $(z_{0},z_{1},z_{2})$ is positively oriented;} \\
		-1 & \text{if $(z_{0},z_{1},z_{2})$ is negatively oriented;} \\
		0 & \text{otherwise}
	\end{cases}
\end{equation}
for all triples $(z_{0},z_{1},z_{2}) \in \T^{3}$ of points on $S^{1}$. The orientation cocycle naturally defines a cocycle in $A^{\infty}(\T^{3},\R)^{P}$ as well. This cocycle is given by the same formula as in \eqref{eqn:OrientationCocycle} and will be denoted by the same symbol $\Or$. We are now ready to prove the following sufficient criterion for a class to be transgressive.

\begin{proposition} \label{prop:StronlgyReducibleImpliesTransgressive}
	Let $\alpha \in H^{n}_{\cb}(G;\R)$ with $n>2$. If $\alpha$ is strongly reducible, then $\alpha$ is transgressive.
\end{proposition}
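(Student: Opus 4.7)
The plan is to verify the cohomological criterion from Proposition~\ref{prop:CriterionForTransgressive} for $\alpha$. Write $\alpha = \alpha' \smallsmile \alpha''$. Since $H^{2}_{\cb}(G;\R) = \R \cdot \kappa$, we have $\alpha' = \lambda \kappa$ for some $\lambda \in \R$; replacing $\alpha''$ by $\lambda \alpha''$, we may assume $\alpha = \kappa \smallsmile \alpha''$. By surjectivity of the lifting homomorphism $\Pi^{n-2}$ (Proposition~\ref{prop:EquivariantLifting}) I would choose a cocycle $b'' \in (\Ac_{P}^{\infty})^{n-2}$ with $\Pi^{n-2}[b''] = \alpha''$. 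Because the orientation cocycle $\Or \in (\Ac_{P}^{\infty})^{2}$ represents $\kappa$ in the boundary model and the lifting homomorphisms intertwine cup products by Lemma~\ref{lemma:CupProductIntertwinesWithLifting}, the class
\[
\beta \deq [\Or \cup b''] \in H^{n}(\Ac_{P}^{\infty})
\]
satisfies $\Pi^{n} \beta = \kappa \smallsmile \alpha'' = \alpha$, giving one of the two conditions in \eqref{eqn:CriterionForTransgressive}.

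I would then compute $(\Phi_{\L}^{n-1})^{-1} \beta$ explicitly. The cochain contraction $\I$ from Proposition~\ref{prop:ComplexAIsAcyclic} produces a primitive for $\Or$ in $\Ac^{\infty}$: since $\de \Or = 0$, the contraction identity \eqref{eqn:CochainContraction} yields $\de(\I \Or) = \Or$. Using Lemma~\ref{lemma:OperatorsActingOnCupProduct} together with $\de b'' = 0$ and $\L b'' = 0$ (the latter because $b''$ is $P$-invariant on the configuration space, see Proposition~\ref{prop:InfinitesimalPInvariance}), the cochain $(\I \Or) \cup b'' \in (\Ac^{\infty})^{n-1}$ satisfies
\[
\de\bigl( (\I \Or) \cup b'' \bigr) = \Or \cup b'' \quad \text{and} \quad \L\bigl( (\I \Or) \cup b'' \bigr) = (\L \I \Or) \cup b''.
\]
Unwinding the definition of the connecting homomorphism $\Phi_{\L}^{n-1}$ thus yields
\[
(\Phi_{\L}^{n-1})^{-1} \beta = \bigl[ (\L \I \Or) \cup b'' \bigr] \in H^{n-1}(\Ec).
\]

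The final step is to show that $i^{*}$ sends this class to zero in $H^{n-1}(\Ac_{\t}^{\infty})$ by exhibiting a tame primitive. The natural candidate, again via Lemma~\ref{lemma:OperatorsActingOnCupProduct}, is $u \deq (\I \L \I \Or) \cup b''$. Since $\Or$ is $P$-invariant, $\L \Or = 0$ on the configuration space, so $\L \I \Or$ is itself a $\de$-cocycle, and the contraction identity combined with $\de b'' = 0$ gives $\de u = (\L \I \Or) \cup b''$ as required. The main obstacle is verifying that $u$ lies in $(\Ac_{\t}^{\infty})^{n-2}$, i.e., that $u$ is tame. Since $b''$ is $P$-invariant on the configuration space and $a_{t} \in P$, the identity $b''(a_{t}.\z) = b''(\z)$ factors $b''$ out of the defining integral along the $A$-orbit and reduces tameness of $u$ to tameness of $\I \L \I \Or \in A^{\infty}(\T^{1},\C_{1})$. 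Being $K$-equivariant of weight $1$ on $S^{1}$, the function $\I \L \I \Or$ equals the $K$-extension of the constant $(\I \L \I \Or)(1) \in \C$ by Lemma~\ref{lemma:ReductionAndExtensionByK}, so Lemma~\ref{lemma:CriterionForTameness} further reduces the problem to the vanishing $\Re (\I \L \I \Or)(1) = 0$. I would verify this by a direct computation from the integral definition of $\I$ and the explicit formulas for $\L_{A}$ and $\L_{N}$ from \eqref{eqn:FundamentalVectorFields}; one finds $(\I \L \I \Or)(z) = \i z/\pi$, whose value at $z=1$ is purely imaginary. Granting this final calculation, $u \in (\Ac_{\t}^{\infty})^{n-2}$ is a tame primitive for $(\L \I \Or) \cup b''$, hence $i^{*} \circ (\Phi_{\L}^{n-1})^{-1} \beta = 0$, and Proposition~\ref{prop:CriterionForTransgressive} concludes that $\alpha$ is transgressive.
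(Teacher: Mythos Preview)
Your proof is correct and follows essentially the same approach as the paper. Both lift $\alpha$ to $\beta = [\Or \cup b'']$, identify $(\Phi_{\L}^{n-1})^{-1}\beta$ with $[(\L\I\Or)\cup b'']$ via the cochain contraction $\I$, and exhibit $(\I\L\I\Or)\cup b''$ as a tame primitive using the computation $\I\L\I\Or(z) = iz/\pi$ (this is Lemma~\ref{lemma:FunctionILIOr} in the paper).

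The one point to tighten is your tameness argument. You factor $b''$ out of the tameness integral using its $P$-invariance on the configuration space, but tameness is a supremum over \emph{all} of $\T^{n-1}$, and a generic representative of $b'' \in (\Ac_{P}^{\infty})^{n-2}$ need not be $P$-invariant off the configuration space. The paper sidesteps this by applying Lemma~\ref{lemma:CriterionForTameness} to the $K$-reduction of the whole cochain, computing via Lemma~\ref{lemma:OperatorsActingOnCupProduct}(iii) that $\bigl((\I\L\I\Or)\cup b''\bigr)_{K} = (i/\pi)\, b''$; since $b''$ is real-valued everywhere, this has vanishing real part on all of $\T^{n-2}$ and tameness follows without any factoring. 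Alternatively, your argument goes through verbatim if you choose $b''$ via Monod's equivariant section (as in the proof of Proposition~\ref{prop:EquivariantLifting}), which produces a representative that is $G$-invariant on all of $\T^{n-1}$.
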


The proof of the proposition relies on the following lemma.

\begin{lemma} \label{lemma:FunctionILIOr}
	The function $\I \L \I \Or \in A^{\infty}(\T^{1},\C_{1})$ is given by
\[
(\I \L \I \Or)(z) = \frac{i}{\pi} \cdot z.
\]
\end{lemma}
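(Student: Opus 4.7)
The plan is to evaluate the three operators in succession using angular coordinates $(\th_0, \th_1, \th_2) \in \R^3$ on $\T^3$, exploiting the elementary structure of $\Or$. Since the measure $\mu_K$ and the orientation cocycle are both $K$-invariant, the function $\I \Or$ on $\T^2$ is $K$-invariant and therefore depends only on the angular difference. So the first step is to compute, for fixed $(\th_0, \th_1)$ with $0 < \th_1 - \th_0 < 2\pi$, the measures of the subsets of $\eta \in S^1$ for which the triple $(\eta, \th_0, \th_1)$ is positively or negatively oriented. A direct inspection of the cyclic order shows that the positively oriented region has measure $2\pi - (\th_1 - \th_0)$ while the negatively oriented region has measure $\th_1 - \th_0$, yielding
\[
(\I \Or)(\th_0, \th_1) \;=\; 1 - \frac{\th_1 - \th_0}{\pi}.
\]

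Next, I would apply the Cauchy operator $\L = \L_A + i\L_N$ using the explicit formulas from \eqref{eqn:FundamentalVectorFields}. The key observation is the elementary identity $\sin\th + i(1-\cos\th) = i(1 - e^{i\th})$, which rewrites $\L$ as $\L = i\sum_j (1 - z_j)\pd{}{\th_j}$ in the complex coordinates $z_j = e^{i\th_j}$. Since $\I\Or$ is linear in $\th_0, \th_1$ on the configuration space $\T^{(2)}$, we have $\pd{(\I\Or)}{\th_0} = 1/\pi$ and $\pd{(\I\Or)}{\th_1} = -1/\pi$, so
\[
\L(\I\Or)(z_0, z_1) \;=\; \frac{1}{\pi}\bigl[i(1-z_0) - i(1-z_1)\bigr] \;=\; \frac{i}{\pi}(z_1 - z_0).
\]

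Finally, integrating this polynomial expression with respect to the first variable and using that $\int_{S^1} w \, \d\mu_K(w) = 0$ while $\int_{S^1} 1 \, \d\mu_K(w) = 1$, one obtains
\[
(\I\L\I\Or)(z) \;=\; \int_{S^1} \frac{i}{\pi}(z - w) \, \d\mu_K(w) \;=\; \frac{i}{\pi} \cdot z,
\]
as claimed. There is no real obstacle in the argument; the only point deserving attention is that $\Or$ is merely piecewise constant rather than smooth, but this is harmless because $\I\Or$ is smooth on the configuration space $\T^{(2)}$ after averaging, so the subsequent application of $\L$ is unproblematic, and $A^\infty$-classes are determined by their values on the configuration space anyway.
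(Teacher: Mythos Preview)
Your computation is correct. The approach differs from the paper's in how the middle operator $\L$ is handled. The paper borrows the commutation formulas \eqref{eqn:FormulaL_AI}--\eqref{eqn:FormulaL_NI} from the proof of Proposition~\ref{prop:ComplexAIsAcyclic}, which combine to give
\[
(\L\I f)(\varphi,\th) \;=\; (\I\L f)(\varphi,\th) \;+\; \frac{1}{2\pi}\int_{0}^{2\pi} e^{i\eta}\, f(\eta,\varphi,\th)\,\d\eta,
\]
and then uses $G$-invariance of $\Or$ (so $\L\Or = 0$) to reduce $\I\L\I\Or$ to the single double integral $\frac{1}{4\pi^{2}}\iint e^{i\eta}\,\Or(\eta,\varphi,\th)\,\d\eta\,\d\varphi$, which is evaluated directly. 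You instead compute $\I\Or$ explicitly as the sawtooth $1-(\th_{1}-\th_{0})/\pi$ on $\T^{(2)}$, differentiate it by hand, and then integrate once more. Both routes produce the same intermediate expression $\L\I\Or(z_{0},z_{1}) = \frac{i}{\pi}(z_{1}-z_{0})$; yours is more self-contained since it does not rely on the commutation identities, while the paper's is more systematic and would generalize more readily to other $G$-invariant cocycles in place of $\Or$.
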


\begin{proof}
	Borrowing \eqref{eqn:FormulaL_AI} and \eqref{eqn:FormulaL_NI} from the proof of Proposition \ref{prop:ComplexAIsAcyclic}, we infer that in angular coordinates, the function $\I \L \I \Or \in A^{\infty}(\T^{1},\C_{1})$ is expressed by the integral
\[
( \I \L \I \Or )(\th) = \frac{1}{4\pi^{2}} \, \int_{0}^{2\pi} \int_{0}^{2\pi} e^{i \eta} \cdot \Or(\eta,\varphi,\th) \, \d\eta \, \d\varphi.
\]
Here we used that the orientation cocycle is $G$-invariant. Then it is an exercise to compute from the explicit formula in \eqref{eqn:OrientationCocycle} that this integral equals $(i/\pi) \cdot e^{i \th}$.
\end{proof}

\begin{proof}[Proof of Proposition \ref{prop:StronlgyReducibleImpliesTransgressive}]
	Fix $n>2$, and consider a strongly reducible class
\[
\alpha = \alpha^{\prime} \smallsmile \alpha^{\prime\prime}
\]
with $\alpha^{\prime} \in H^{2}_{\cb}(G;\R)$ and $\alpha^{\prime\prime} \in H^{n-2}_{\cb}(G;\R)$. We are going to show that $\alpha$ satisfies the criterion in Proposition \ref{prop:CriterionForTransgressive}. This is trivially true if $\alpha^{\prime} = 0$, hence we will assume that $\alpha^{\prime} \neq 0$. For ease of notation, we will mostly suppress the canonical inclusions $\iota$ and $i$ throughout this proof. 

\medskip

\noindent{\textbf{Step 1.}} Since the lifting homomorphism in \eqref{map:HomomorphismEquivariantLift} is surjective, there exist classes $\beta^{\prime} \in H^{2}(\Ac_{P}^{\infty})$ and $\beta^{\prime\prime} \in H^{n-2}(\Ac_{P}^{\infty})$ such that
\[
\Pi^{2} \, \beta^{\prime} = \alpha^{\prime} \quad \text{and} \quad \Pi^{n-2} \, \beta^{\prime\prime} = \alpha^{\prime\prime}.
\]

\medskip

\noindent{\textbf{Step 2.}} Recall from the above that $H^{2}_{\cb}(G;\R) \cong \R$, with an explicit generator determined by the orientation cocycle $\Or \in L^{\infty}(\T^{3},\R)^{G}$ via the isomorphism in \eqref{map:BoundaryModel}. We think of the orientation cocycle as an element of $A^{\infty}(\T^{3},\R)^{P}$. Since $\alpha^{\prime} \neq 0$ by assumption, it follows that $\alpha^{\prime}$ is in fact a real multiple of $\Pi^{2} \, [\Or]$. Hence, rescaling $\alpha^{\prime\prime}$ and $\beta^{\prime\prime}$ with the same factor if necessary, we may without loss of generality assume that
\begin{equation} \label{eqn:BetaPrimeIsOr}
	\beta^{\prime} = [\Or].
\end{equation}

\medskip

\noindent{\textbf{Step 3.}} Consider now the product $\beta \deq \beta^{\prime} \cup \beta^{\prime\prime} \in H^{n}(\Ac_{P}^{\infty})$. It follows with Lemma \ref{lemma:CupProductIntertwinesWithLifting} and Step 1 that
\[
\Pi^{n} \, \beta = \Pi^{2} \, \beta^{\prime} \smallsmile \Pi^{n-2} \, \beta^{\prime\prime} = \alpha^{\prime} \smallsmile \alpha^{\prime\prime} = \alpha.
\]

\medskip

\noindent{\textbf{Step 4.}} We pick a cocycle $b^{\prime\prime} \in (\Ac_{P}^{\infty})^{n-2}$ representing the class $\beta^{\prime\prime}$. Then \eqref{eqn:BetaPrimeIsOr} implies that the cocycle
\begin{equation} \label{eqn:ProductCocycleb}
	b \deq \Or \cup b^{\prime\prime} \in (\Ac_{P}^{\infty})^{n}
\end{equation}
is a representative for the class $\beta$.

\medskip

\noindent{\textbf{Step 5.}} We claim that the cocycle $u \in (\Ac_{\t}^{\infty})^{n-1}$ defined by
\[
u \deq \L \, \I \, b
\]
represents the class $i^{\ast} \circ (\Phi_{\L}^{n-1})^{-1} \, \beta \in H^{n-1}(\Ac_{\t}^{\infty})$. Here $\map{\I}{(\Ac^{\infty})^{n}}{(\Ac^{\infty})^{n-1}}$ is the cochain contraction defined in Section \ref{subsec:CochainContractions}.

\medskip

In fact, with the definition of the connecting homomorphism $\Phi_{\L}^{n}$ in \eqref{map:ConnectingHomomorphismL} understood, this follows from the diagram
\[
\begin{tikzcd}[column sep = large, row sep = large]
	&(\Ac^{\infty})^{n-1} \arrow[r,"\L^{n-1}",twoheadrightarrow] \arrow[d,"\de^{n-1}",rightarrow]
	& \Ec^{n-1} \arrow[r,"i^{n-1}",hookrightarrow]
	& (\Ac^{\infty}_{\t})^{n-1}
	&
	& \\
	(\Ac_{P}^{\infty})^{n} \arrow[r,"\iota^{n}",hookrightarrow]
	& (\Ac^{\infty})^{n} \arrow[bend left, u,"\I^{n}",rightarrow]
	&
	&
	&
\end{tikzcd}
\]
together with the fact that $\I^{n}$ is a cochain contraction by Proposition \ref{prop:ComplexAIsAcyclic}.

\medskip

\noindent{\textbf{Step 6.}} We claim that the cocycle $u$ is given by the formula
\begin{equation} \label{eqn:FormulaForuTransgression}
	u = ( \L \, \I \, \Or) \cup b^{\prime\prime}.
\end{equation}

\medskip

To see this, we apply Lemma \ref{lemma:OperatorsActingOnCupProduct}\,(i, ii) to the defining formula for $u$ from Step 5. By \eqref{eqn:ProductCocycleb} we obtain
\[
u = \L \, \I \, b = \L \, \I \, ( \Or \cup b^{\prime\prime} ) = \L \, \bigl( (\I \, \Or) \cup b^{\prime\prime} \bigr) = ( \L \, \I \, \Or) \cup b^{\prime\prime} + ( \I \, \Or) \cup ( \L \, b^{\prime\prime} ).
\]
Since $b^{\prime\prime}$ is $P$-invariant, we have $\L b^{\prime\prime} = 0$. The claimed formula follows.

\medskip

\noindent{\textbf{Step 7.}} Define a cochain $v \in (\Ac^{\infty})^{n-1}$ by
\[
v \deq \I u.
\]
We claim that $v$ is a tame function. Since $\I$ is a cochain contraction by Proposition \ref{prop:ComplexAIsAcyclic}, this will then imply that $[u] = 0$ in $H^{n-1}(\Ac_{\t}^{\infty})$.

\medskip

First of all, by Lemma \ref{lemma:OperatorsActingOnCupProduct}\,(ii) we obtain from \eqref{eqn:FormulaForuTransgression} the expression
\[
v = ( \I \, \L \, \I \, \Or) \cup b^{\prime\prime}.
\]
By Lemma \ref{lemma:OperatorsActingOnCupProduct}\,(iii), the $K$-reduction of this cochain is the function
\[
v_{K} = ( \I \, \L \, \I \, \Or)_{K} \cup b^{\prime\prime},
\]
which by Lemma \ref{lemma:FunctionILIOr} and \eqref{eqn:KReduction} equals
\[
v_{K} = \frac{i}{\pi} \cdot b^{\prime\prime}.
\]
Since $b^{\prime\prime}$ is real valued, it follows that $\Re v_{K} = 0$. Moreover, by Lemma \ref{lemma:ReductionAndExtensionByK}\,(i) we may write the function $v$ as the $K$-extension $v = (v_{K})^{K}_{1}$. Hence it follows from Lemma \ref{lemma:CriterionForTameness} that $v$ is tame.

\medskip

\noindent{\textbf{Step 8.}} Combining the results from Step 3, Step 5 and Step 7, we have proved that
\[
i^{\ast} \circ (\Phi_{\L}^{n-1})^{-1} \, \beta = 0 \quad \text{and} \quad \Pi^{n} \, \beta = \alpha,
\]
which by Proposition \ref{prop:CriterionForTransgressive} implies that $\alpha$ is transgressive.
\end{proof}

\subsection{Proof of Theorem \ref{thm:VanishingForStronglyReducibleClasses} and Theorem \ref{thm:ImageOfTransgressionMap}}

By invariance of continuous bounded cohomology of connected Lie groups under local isomorphisms \cite[Cor.\,7.5.10]{Monod/Continuous-bounded-cohomology-of-locally-compact-groups} it will be enough to prove the theorem for the Lie group $G = \PU(1,1)$. Fix a degree $n>2$, and consider a strongly reducible class $\alpha \in H^{n}_{\cb}(G;\R)$. Then by Proposition \ref{prop:StronlgyReducibleImpliesTransgressive} the class $\alpha$ is transgressive, hence $\alpha = 0$ by Proposition \ref{prop:VanishingForTransgressiveClasses}.

\section{Construction of primitives}
\label{sec:ConstructionOfPrimitives}

\subsection{Explicit formulas for primitives}
\label{subsec:ExplicitFormulasForPrimitives}

Fix an integer $n>2$, and consider a $G$-invariant bounded cocycle $c \in A^{\infty}(\T^{n+1},\R)^{P}$ satisfying $\de c = 0$. By a \emph{primitive} of the cocycle $c$ we mean a $G$-invariant function $p \in A(\T^{n},\R)^{P}$ that solves the cohomological equation
\[
\de p = c.
\]
Note that we do not require the function $p$ to be bounded. The aim of this section is to provide a systematic way of constructing such primitives in explicit terms for any given $G$-invariant bounded cocycle $c$. We will moreover see that the primitives obtained in this way are bounded under suitable additional assumptions on the cocycle $c$.

\begin{proposition} \label{prop:ExplicitFormulasForPrimitives}
	Fix an integer $n>2$ and a measurable set of basepoints $B_{n} \subset \T^{n+1}$ for the boundary action of $G$ on $\T^{n+1}$. Let $c \in A^{\infty}(\T^{n+1},\R)^{P}$ be a $G$-invariant bounded cocycle satisfying $\de c = 0$. Define a function $p \in A(\T^{n},\R)$ by
\begin{equation} \label{eqn:PrimitivesDefinitionOfp}
	p \deq \I c - \de \Rop_{\Bc} u,
\end{equation}
where $u \in A^{\infty}(\T^{n-1},\C_{1})$ is the function
\begin{equation} \label{eqn:PrimitivesDefinitionOfu}
	u \deq \bigl( \Id - \de \Sop \I \Q \bigr) \I \L \I c
\end{equation}
(see Figure \ref{fig:Primitivep}). Here $\I$ is the cochain contraction in \eqref{eqn:DefinitionOfOperatorI}, $\L$ and $\Q$ are the differential operators in \eqref{map:OperatorLDownstairs} and \eqref{map:OperatorQDownstairs}, and $\Rop_{\Bc}$ and $\Sop$ are the integral operators in \eqref{map:SolutionOperatorCauchyProblem} and \eqref{map:SolutionOperatorFrobeniusProblem}.
\begin{enumerate}[leftmargin=1cm,topsep=0.5ex,itemsep=0.5ex]
	\item The function $p$ is a well-defined primitive for the cocycle $c$, i.e., $p \in A(\T^{n},\R)^{P}$ and
\[
\de p = c.
\]
	\item Assume in addition that the cocycle $c$ admits a product decomposition
\[
c = \Or \cup c^{\prime}
\]
for some cocycle $c^{\prime} \in A^{\infty}(\T^{n-1},\R)^{P}$, where $\Or \in A^{\infty}(\T^{3},\R)^{P}$ is the orientation cocycle defined in \eqref{eqn:OrientationCocycle} and $\cup$ denotes the cup product in \eqref{map:CupProductOnAinftyReal}. Then the primitive $p$ is bounded.
\end{enumerate}
\end{proposition}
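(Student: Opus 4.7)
The plan is to verify parts (i) and (ii) using the interplay of the operators $\I$, $\L$, $\Q$, $\Sop$, $\delta$ from Sections~\ref{sec:Cohomology}--\ref{sec:CauchyFrobeniusComplex}. The crucial tools are: $\I$ is a cochain contraction (Proposition~\ref{prop:ComplexAIsAcyclic}); $\L$ and $\Q$ commute with $\delta$ (Lemma~\ref{lemma:OperatorsLAndQIntertwine}); $\Q \L = 0$ (Proposition~\ref{prop:CauchyFrobeniusSequence}); and $\Q \Sop = \Id$ (Proposition~\ref{prop:FrobeniusProblem}).

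For part (i), I would begin by checking that $u$ lies in $\im \L^{n-2} = \ker \Q^{n-2}$, so that $\Rop_{\Bc} u$ is defined by Proposition~\ref{prop:CauchyProblem}. Since $\delta c = 0$ and $\L c = 0$, successive applications of the cochain contraction combined with $\Q \delta = \delta \Q$ and $\Q \L = 0$ yield
\[
\delta \I \L \I c = \L \I c, \qquad \delta \Q \I \L \I c = \Q \L \I c = 0, \qquad \delta \I \Q \I \L \I c = \Q \I \L \I c.
\]
Using the last identity together with $\Q \Sop = \Id$ and $\Q \delta = \delta \Q$ gives $\Q u = \Q \I \L \I c - \delta \I \Q \I \L \I c = 0$. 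Once $\Rop_{\Bc} u$ is in hand, the remaining verifications are formal: $\delta p = \delta \I c - \delta^2 \Rop_{\Bc} u = c$ by the cochain contraction and $\delta^2 = 0$, while
\[
\L p = \L \I c - \delta \L \Rop_{\Bc} u = \L \I c - \delta u = \L \I c - \delta \I \L \I c = 0,
\]
and the argument of Proposition~\ref{prop:InfinitesimalPInvariance} then forces $P$-invariance of $p$ on the configuration space $\T^{(n)}$.

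For part (ii), since $\I c$ is manifestly bounded, it is enough to prove boundedness of $\Rop_{\Bc} u$, which by Proposition~\ref{prop:CauchyProblem}(iii) reduces to tameness of $u$. Using $\L c' = 0$ together with Lemma~\ref{lemma:OperatorsActingOnCupProduct}\,(i,\,ii), the Leibniz rule and compatibility of $\I$ with cup products give
\[
\I \L \I c = \I \L \bigl( (\I \Or) \cup c' \bigr) = (\I \L \I \Or) \cup c'.
\]
A direct $K$-reduction of this product together with Lemma~\ref{lemma:FunctionILIOr} yields $(\I \L \I c)_K = (i/\pi) \cdot c'_K$, whose real part vanishes because $c'$ is real-valued. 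Lemma~\ref{lemma:CriterionForTameness} then certifies that $\I \L \I c$ is tame. The corrector term $\delta \Sop \I \Q \I \L \I c$ is tame because $\Sop$ takes values in $A_\t^\infty$ by Proposition~\ref{prop:FrobeniusProblem} and the coboundary operator preserves tameness, so $u \in A_\t^\infty(\T^{n-1}, \C_1)$.

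The principal obstacle is the operator bookkeeping in part (i): extracting the integrability condition $\Q u = 0$ requires coordinating the cochain contraction with each of $\delta \L = \L \delta$, $\delta \Q = \Q \delta$, and $\Q \L = 0$ at the appropriate degree, and the Frobenius corrector $\delta \Sop \I \Q \I \L \I c$ inside $u$ is calibrated precisely to cancel the residual obstruction $\Q \I \L \I c$ arising because $\I$ fails to commute with $\L$ and $\Q$. The tameness argument in part (ii) is structurally parallel to Step~7 of the proof of Proposition~\ref{prop:StronlgyReducibleImpliesTransgressive}, with Lemma~\ref{lemma:FunctionILIOr} again the decisive input that forces the real part of the $K$-reduction to vanish.
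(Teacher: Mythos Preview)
Your proposal is correct and follows essentially the same path as the paper's proof: the same chain of identities $\de\I\L\I c=\L\I c$, $\de\Q\I\L\I c=0$, $\de\I\Q\I\L\I c=\Q\I\L\I c$ is used to verify $\Q u=0$, and the tameness argument for part~(ii) via Lemma~\ref{lemma:FunctionILIOr} and Lemma~\ref{lemma:CriterionForTameness} matches the paper's Steps~8--10 (your formula $(\I\L\I c)_K=(i/\pi)\,c'_K$ is in fact the precise version of what the paper writes as $(i/\pi)\,c'$).
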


Motivated by the schematic diagram in Figure \ref{fig:Primitivep}, we will refer to the formulas in \eqref{eqn:PrimitivesDefinitionOfp} and \eqref{eqn:PrimitivesDefinitionOfu} as the \emph{staircase construction} of the primitive $p$ for the cocycle $c$.

\begin{figure}[ht] \label{fig:Primitivep}
\begin{tikzcd}[column sep = large, row sep = large]
	&&
	A^{\infty}_{\t}(\T^{n-2},\C_{1}) \arrow[r,"\Q^{n-3}",twoheadrightarrow] \arrow[d,"\de^{n-3}",rightarrow]
	& A^{\infty}(\T^{n-2},\R) \arrow[d,"\de^{n-3}",rightarrow] \arrow[bend right, l,"\Sop^{n-3}"',rightarrow] \\
	& A(\T^{n-1},\R) \arrow[r,"\L^{n-2}",rightarrow] \arrow[d,"\de^{n-2}",rightarrow]
	& A^{\infty}(\T^{n-1},\C_{1}) \arrow[r,"\Q^{n-2}",twoheadrightarrow] \arrow[d,"\de^{n-2}",rightarrow] \arrow[bend right, l,"\Rop_{\Bc}^{n-2}"',dashrightarrow]
	& A^{\infty}(\T^{n-1},\R) \arrow[bend left, u,"\I^{n-2}",rightarrow] \\
	A(\T^{n},\R)^{P} \arrow[r,hookrightarrow] \arrow[d,"\de^{n-1}",rightarrow]
	& A(\T^{n},\R) \arrow[r,"\L^{n-1}",rightarrow] \arrow[d,"\de^{n-1}",rightarrow]
	& A^{\infty}(\T^{n},\C_{1}) \arrow[bend left, u,"\I^{n-1}",rightarrow]
	& \\
	A(\T^{n+1},\R)^{P} \arrow[r,hookrightarrow]
	& A(\T^{n+1},\R) \arrow[bend left, u,"\I^{n}",dashrightarrow]
	&&
\end{tikzcd}
\caption{The staircase construction of primitives. The dashed arrows indicate that the respective map is defined on a smaller domain.}
\end{figure}

\begin{proof}[Proof of Proposition \ref{prop:ExplicitFormulasForPrimitives}]
	Fix $n>2$ and a measurable set of basepoints $B_{n} \subset \T^{n+1}$ for the boundary action of $G$ on $\T^{n+1}$. Let $c \in A^{\infty}(\T^{n+1},\R)^{P}$ be such that $\de c = 0$.

\medskip

\noindent{\textbf{Step 1.}} We claim that the function $\I \L \I c \in A^{\infty}(\T^{n-1},\C_{1})$ satisfies
\[
\de \I \L \I c = \L \I c.
\]

\medskip

First of all, we note that by Proposition \ref{prop:ComplexAIsAcyclic} and Proposition \ref{prop:CauchyFrobeniusSequence} the function $\I \L \I c$ is well-defined. Observe that
\[
\de \L \I c = \L \de \I c = \L c = 0.
\]
Here in the first equality we used that $\L$ is a cochain map by Lemma \ref{lemma:OperatorsLAndQIntertwine}, while the second equality follows from Proposition \ref{prop:ComplexAIsAcyclic} since $c$ is a cocycle, and the third equality follows from Proposition \ref{prop:CauchyFrobeniusSequence} since $c$ is $P$-invariant. The claim is then a consequence of Proposition \ref{prop:ComplexAIsAcyclic}.

\medskip

\noindent{\textbf{Step 2.}} Define a function $u \in A^{\infty}(\T^{n-1},\C_{1})$ by
\begin{equation} \label{eqn:DefinitionOfu}
	u \deq \I \L \I c - \de \Sop \I \Q \I \L \I c = \bigl( \Id - \de \Sop \I \Q \bigr) \I \L \I c.
\end{equation}
It follows from Proposition \ref{prop:ComplexAIsAcyclic}, Proposition \ref{prop:CauchyFrobeniusSequence} and Proposition \ref{prop:FrobeniusProblem} that this function is well-defined.

\medskip

\noindent{\textbf{Step 3.}} We claim that $\de u = \L \I c$.

\medskip

This is immediate from \eqref{eqn:DefinitionOfu} using Step 1 and the fact that $\de ^{2} = 0$.

\medskip

\noindent{\textbf{Step 4.}} We claim that $\Q u = 0$.

\medskip

To prove this, we first observe that
\[
\de \Q \I \L \I c = \Q \de \I \L \I c = \Q \L \I c = 0
\]
by Step 1 and Proposition \ref{prop:CauchyFrobeniusSequence}. Hence it follows with Proposition \ref{prop:FrobeniusProblem} that
\[
\Q \de \Sop \I \Q \I \L \I c = \de \Q \Sop \I \Q \I \L \I c = \de \I \Q \I \L \I c = \Q \I \L \I c.
\]
The claim is now immediate from \eqref{eqn:DefinitionOfu}.

\medskip

\noindent{\textbf{Step 5.}} Define a function $p \in A(\T^{n},\R)$ by
\begin{equation} \label{eqn:DefinitionOfp}
	p \deq \I c - \de \Rop_{\Bc} u.
\end{equation}
It follows from Proposition \ref{prop:ComplexAIsAcyclic} and Proposition \ref{prop:CauchyProblem} in combination with Step~4 that this function is well-defined.

\medskip

\noindent{\textbf{Step 6.}} We claim that $\de p = c$.

\medskip

This follows from Proposition \ref{prop:ComplexAIsAcyclic} since $c$ is a cocycle, together with the fact that $\de ^{2} = 0$.

\medskip

\noindent{\textbf{Step 7.}} We claim that $\L p = 0$ and hence $p \in A(\T^{n+1},\R)^{P}$. Together with Step 6 this proves part (i) of the proposition.

\medskip

Using Proposition \ref{prop:CauchyProblem} and Step 3, we compute that
\[
\L \de \Rop_{\Bc} u = \de \L \Rop_{\Bc} u = \de u = \L \I c.
\]
The claim now follows from \eqref{eqn:DefinitionOfp}.

\medskip

\noindent{\textbf{Step 8.}} Assume from now that $c = \Or \cup c^{\prime}$ for some cocycle $c^{\prime} \in A^{\infty}(\T^{n-1},\R)^{P}$. We claim that the function $\I \L \I c \in A^{\infty}(\T^{n-1},\C_{1})$ is tame.

\medskip

Since $c^{\prime}$ is $P$-invariant and hence $\L c^{\prime} = 0$, we compute with Lemma \ref{lemma:OperatorsActingOnCupProduct}\,(i, ii) that
\[
\I \L \I c = \I \bigl( (\L \I \Or) \cup c^{\prime} + (\I \Or) \cup (\L c^{\prime}) \bigr) = (\I \L \I \Or) \cup c^{\prime}.
\]
By Lemma \ref{lemma:OperatorsActingOnCupProduct}\,(iii), the $K$-reduction of this function is
\[
(\I \L \I c)_{K} = ( \I \L \I \Or)_{K} \cup c^{\prime},
\]
which by Lemma \ref{lemma:FunctionILIOr} and \eqref{eqn:KReduction} equals
\[
(\I \L \I c)_{K} = \frac{i}{\pi} \cdot c^{\prime}.
\]
Since $c^{\prime}$ is real valued, it follows that $\Re (\I \L \I c)_{K} = 0$. Moreover, by Lemma \ref{lemma:ReductionAndExtensionByK}\,(i) we may write the function $\I \L \I c$ as the $K$-extension $\I \L \I c = ((\I \L \I c)_{K})^{K}_{1}$. Hence the claim follows from Lemma \ref{lemma:CriterionForTameness}.

\medskip

\noindent{\textbf{Step 9.}} We know from Proposition \ref{prop:FrobeniusProblem} that the function $\de \Sop \I \Q \I \L \I c \in A^{\infty}(\T^{n-1},\C_{1})$ is tame. Combining this with Step 8, it follows from \eqref{eqn:DefinitionOfu} that the function $u$ is tame.

\medskip

\noindent{\textbf{Step 10.}} Since $u$ is tame by Step 9, Proposition \ref{prop:CauchyProblem}\,(iii) implies that the function $\Rop_{\Bc} u$ in Step 5 is bounded. Since $c$ is bounded, the function $\I c$ is bounded by Proposition \ref{prop:ComplexAIsAcyclic}. Hence we conclude from \eqref{eqn:DefinitionOfp} that the primitive $p$ is bounded as well. This proves part (ii) of the proposition.
\end{proof}

\subsection{The operator $\P$}
\label{subsec:TheOperatorP}

Our goal in this section is to define the linear operator
\[
\map{\P^{n}}{L^{\infty}(\T^{n+1},\R)^{G} \supset \ker \de^{n}}{L^{0}(\T^{n},\R)^{G}} \quad\quad (n>2)
\]
that appears in Theorem \ref{thm:ExplicitFormulasForPrimitives}. To begin with, let us denote by
\[
\map{\piop^{n}}{A^{\infty}(\T^{n+1},\R)^{P}}{L^{\infty}(\T^{n+1},\R)^{G}} \quad\quad (n \ge 0)
\]
the natural cochain map \eqref{map:EquivariantLiftingFromA^P}. We have seen in the proof of Proposition \ref{prop:EquivariantLifting} that by a result of Monod, this map admits a section which we denote by
\[
\map{\sigmaop^{n}}{L^{\infty}(\T^{n+1},\R)^{G}}{A^{\infty}(\T^{n+1},\R)^{P}} \quad\quad (n \ge 0).
\]
Let us further fix a collection $\Bc = \{B_{n}\}_{n \ge 2}$ of measurable sets of basepoints $B_{n} \subset \T^{n+1}$ for the boundary action of $G$ on $\T^{n+1}$ for all $n \ge 2$ (cf.\,\cite[App.\,B]{Zimmer/Ergodic-theory-and-semisimple-groups}). We are now in a position to define the operator $\P$.

Let $n>2$, and let $c \in L^{\infty}(\T^{n+1},\R)^{G}$ be a $G$-invariant bounded function satisfying the cocycle relation $\de^{n} c = 0$. We then define
\begin{multline} \label{eqn:DefinitionOfOperatorP}
	\P^{n} c \deq \piop^{n-1} \I^{n} \sigmaop^{n} c \,\,- \\ \piop^{n-1} \de^{n-2} \Rop_{\Bc}^{n-2} \Big( \Id \,-\, \de^{n-3} \Sop^{n-3} \I^{n-2} \Q^{n-2} \Big) \I^{n-1} \L^{n-1} \I^{n} \sigmaop^{n} c. \hspace{15mm}
\end{multline}
Here $\I$ is the cochain contraction in \eqref{eqn:DefinitionOfOperatorI}, $\L$ and $\Q$ are the differential operators in \eqref{map:OperatorLDownstairs} and \eqref{map:OperatorQDownstairs}, and $\Rop_{\Bc}$ and $\Sop$ are the integral operators in \eqref{map:SolutionOperatorCauchyProblem} and \eqref{map:SolutionOperatorFrobeniusProblem}. Comparing with the formulas in \eqref{eqn:PrimitivesDefinitionOfp} and \eqref{eqn:PrimitivesDefinitionOfu}, it follows from Proposition \ref{prop:ExplicitFormulasForPrimitives} that the function $\P^{n} c$ is in fact well-defined. 

The formula in \eqref{eqn:DefinitionOfOperatorP} is illustrated schematically in Figure \ref{fig:Primitivep}. By abuse of notation, we will usually suppress the maps $\pi$ and $\sigma$, writing
\[
\P c = \I c - \de \Rop_{\Bc} \bigl( \Id - \de \Sop \I \Q \bigr) \I \L \I c
\]
for short. One should, however, keep in mind that the right-hand side of this formula will only be defined for representatives of the cocycle $c$ that are contained in the space $A^{\infty}(\T^{n+1},\R)^{P}$.

\subsection{Proof of Theorem \ref{thm:ExplicitFormulasForPrimitives}}

Let $\P$ be the linear operator defined by \eqref{eqn:DefinitionOfOperatorP} in Section \ref{subsec:TheOperatorP}. It is well-defined by Proposition \ref{prop:ExplicitFormulasForPrimitives}. This proves (i). Comparing with \eqref{eqn:PrimitivesDefinitionOfp} and \eqref{eqn:PrimitivesDefinitionOfu}, we see that (ii) and (iii) follow from the corresponding statements in Proposition \ref{prop:ExplicitFormulasForPrimitives}\,(i, ii).

\end{document}